\documentclass[11pt]{article}
\usepackage[dvips]{graphicx}
\usepackage{comment}
\usepackage{textcomp}
\usepackage{amsbsy}
\usepackage{latexsym}
\usepackage[mathscr]{eucal}
\usepackage{amsfonts,amsmath,amsthm}
\usepackage{amssymb}
\usepackage[usenames]{xcolor}

\usepackage{fullpage}

\def\rnew{\color{magenta}}

\begin{document}
\bibliographystyle{plain}
\title
{Constrained Kolmogorov widths}
\author{Ronald DeVore, Guergana Petrova, Jonathan W. Siegel, and  Przemys{\l}aw Wojtaszczyk}
%
%
%
\hbadness=10000
\vbadness=10000
\newtheorem{lemma}{Lemma}[section]
\newtheorem{prop}[lemma]{Proposition}
\newtheorem{cor}[lemma]{Corollary}
\newtheorem{theorem}[lemma]{Theorem}
\newtheorem{remark}[lemma]{Remark}
\newtheorem{example}[lemma]{Example}
\newtheorem{definition}[lemma]{Definition}
\newtheorem{proper}[lemma]{Properties}
\newtheorem{assumption}[lemma]{Assumption}
%
\newenvironment{disarray}{\everymath{\displaystyle\everymath{}}\array}{\endarray}

\def\RR{\rm \hbox{I\kern-.2em\hbox{R}}}
\def\NN{\rm \hbox{I\kern-.2em\hbox{N}}}
\def\ZZ{\rm {{\rm Z}\kern-.28em{\rm Z}}}
\def\CC{\rm \hbox{C\kern -.5em {\raise .32ex \hbox{$\scriptscriptstyle
|$}}\kern
-.22em{\raise .6ex \hbox{$\scriptscriptstyle |$}}\kern .4em}}
\def\vp{\varphi}
\def\<{\langle}
\def\>{\rangle}
\def\t{\tilde}
\def\i{\infty}
\def\e{\varepsilon}
\def\sm{\setminus}
\def\nl{\newline}
\def\o{\overline}
\def\wt{\widetilde}
\def\wh{\widehat}
\def\cT{{\cal T}}
\def\cA{{\cal A}}
\def\cI{{\cal I}}
\def\cV{{\cal V}}
\def\cB{{\cal B}}
\def\cF{{\cal F}}
\def\cY{{\cal Y}}

\def\cD{{\cal D}}
\def\cP{{\cal P}}
\def\cJ{{\cal J}}
\def\cM{{\cal M}}
\def\cO{{\cal O}}
\def\Chi{\raise .3ex
\hbox{\large $\chi$}} \def\vp{\varphi}
\def\lsima{\hbox{\kern -.6em\raisebox{-1ex}{$~\stackrel{\textstyle<}{\sim}~$}}\kern -.4em}
\def\lsim{\hbox{\kern -.2em\raisebox{-1ex}{$~\stackrel{\textstyle<}{\sim}~$}}\kern -.2em}
\def\[{\Bigl [}
\def\]{\Bigr ]}
\def\({\Bigl (}
\def\){\Bigr )}
\def\[{\Bigl [}
\def\]{\Bigr ]}
\def\({\Bigl (}
\def\){\Bigr )}
\def\L{\pounds}
\def\pr{{\rm Prob}}
\newcommand{\cs}[1]{{\color{magenta}{#1}}}
\def\ds{\displaystyle}
\def\ev#1{\vec{#1}}     
\newcommand{\lt}{\ell^{2}(\nabla)}
\def\Supp#1{{\rm supp\,}{#1}}
\def\R{\mathbb{R}}
\def\E{\mathbb{E}}
\def\nl{\newline}
\def\T{{\relax\ifmmode I\!\!\hspace{-1pt}T\else$I\!\!\hspace{-1pt}T$\fi}}
\def\N{\mathbb{N}}
\def\Z{\mathbb{Z}}
\def\N{\mathbb{N}}
\def\Zd{\Z^d}
\def\Q{\mathbb{Q}}
\def\C{\mathbb{C}}
\def\Rd{\R^d}
\def\gsim{\mathrel{\raisebox{-4pt}{$\stackrel{\textstyle>}{\sim}$}}}
\def\sime{\raisebox{0ex}{$~\stackrel{\textstyle\sim}{=}~$}}
\def\lsim{\raisebox{-1ex}{$~\stackrel{\textstyle<}{\sim}~$}}
\def\div{\mbox{ div }}
\def\M{M}  \def\NN{N}                  
\def\Le{{\ell^1}}            
\def\Lz{{\ell^2}}
\def\Let{{\tilde\ell^1}}     
\def\Lzt{{\tilde\ell^2}}
\def\Ltw{\ell^\tau^w(\nabla)}
\def\t#1{\tilde{#1}}
\def\la{\lambda}
\def\La{\Lambda}
\def\ga{\gamma}
\def\BV{{\rm BV}}
\def\Ga{\eta}
\def\al{\alpha}
\def\cZ{{\cal Z}}
\def\cA{{\cal A}}
\def\cU{{\cal U}}
\def\argmin{\mathop{\rm argmin}}
\def\argmax{\mathop{\rm argmax}}
\def\prob{\mathop{\rm prob}}

\def\cO{{\cal O}}
\def\cA{{\cal A}}
\def\cC{{\cal C}}
\def\cS{{\cal F}}
\def\bu{{\bf u}}
\def\bz{{\bf z}}
\def\bZ{{\bf Z}}
\def\bI{{\bf I}}
\def\cE{{\cal E}}
\def\cD{{\cal D}}
\def\cG{{\cal G}}
\def\cI{{\cal I}}
\def\cJ{{\cal J}}
\def\cM{{\cal M}}
\def\cN{{\cal N}}
\def\cT{{\cal T}}
\def\cU{{\cal U}}
\def\cV{{\cal V}}
\def\cW{{\cal W}}
\def\cL{{\cal L}}
\def\cB{{\cal B}}
\def\cG{{\cal G}}
\def\cK{{\cal K}}
\def\cX{{\cal X}}
\def\cS{{\cal S}}
\def\cP{{\cal P}}
\def\cQ{{\cal Q}}
\def\cR{{\cal R}}
\def\cU{{\cal U}}
\def\bL{{\bf L}}
\def\bl{{\bf l}}
\def\bK{{\bf K}}
\def\bC{{\bf C}}
\def\X{X\in\{L,R\}}
\def\ph{{\varphi}}
\def\D{{\Delta}}
\def\H{{\cal H}}
\def\bM{{\bf M}}
\def\bx{{\bf x}}
\def\bj{{\bf j}}
\def\bG{{\bf G}}
\def\bP{{\bf P}}
\def\bW{{\bf W}}
\def\bT{{\bf T}}
\def\bV{{\bf V}}
\def\bv{{\bf v}}
\def\bt{{\bf t}}
\def\bz{{\bf z}}
\def\bw{{\bf w}}
\def \span{{\rm span}}
\def \meas {{\rm meas}}
\def\rhom{{\rho^m}}
\def\diff{\hbox{\tiny $\Delta$}}
\def\EE{{\rm Exp}}
\def\lll{\langle}
\def\argmin{\mathop{\rm argmin}}
\def\codim{\mathop{\rm codim}}
\def\rank{\mathop{\rm rank}}

\def\argmax{\mathop{\rm argmax}}
\def\dJ{\nabla}
\newcommand{\ba}{{\bf a}}
\newcommand{\bb}{{\bf b}}
\newcommand{\bc}{{\bf c}}
\newcommand{\bd}{{\bf d}}
\newcommand{\bs}{{\bf s}}
\newcommand{\bff}{{\bf f}}
\newcommand{\bp}{{\bf p}}
\newcommand{\bg}{{\bf g}}
\newcommand{\by}{{\bf y}}
\newcommand{\br}{{\bf r}}
\newcommand{\be}{\begin{equation}}
\newcommand{\ee}{\end{equation}}
\newcommand{\bea}{$$ \begin{array}{lll}}
\newcommand{\eea}{\end{array} $$}
\def \Vol{\mathop{\rm  Vol}}
\def \mes{\mathop{\rm mes}}
\def \Prob{\mathop{\rm  Prob}}
\def \exp{\mathop{\rm    exp}}
\def \sign{\mathop{\rm   sign}}
\def \sp{\mathop{\rm   span}}
\def \rad{\mathop{\rm   rad}}
\def \vphi{{\varphi}}
\def \csp{\overline \mathop{\rm   span}}

\def\beginproof{\noindent{\bf Proof:}~ }
\def\endproof{\hfill\rule{1.5mm}{1.5mm}\\[2mm]}

\newenvironment{Proof}{\noindent{\bf Proof:}\quad}{\endproof}

\renewcommand{\theequation}{\thesection.\arabic{equation}}
\renewcommand{\thefigure}{\thesection.\arabic{figure}}

\makeatletter
\@addtoreset{equation}{section}
\makeatother

\newcommand\abs[1]{\left|#1\right|}
\newcommand\clos{\mathop{\rm clos}\nolimits}
\newcommand\trunc{\mathop{\rm trunc}\nolimits}
\renewcommand\d{d}
\newcommand\dd{d}
\newcommand\diag{\mathop{\rm diag}}
\newcommand\dist{\mathop{\rm dist}}
\newcommand\diam{\mathop{\rm diam}}
\newcommand\cond{\mathop{\rm cond}\nolimits}
\newcommand\eref[1]{{\rm (\ref{#1})}}
\newcommand{\iref}[1]{{\rm (\ref{#1})}}
\newcommand\Hnorm[1]{\norm{#1}_{H^s([0,1])}}
\def\int{\intop\limits}
\renewcommand\labelenumi{(\roman{enumi})}
\newcommand\lnorm[1]{\norm{#1}_{\ell^2(\Z)}}
\newcommand\Lnorm[1]{\norm{#1}_{L_2([0,1])}}
\newcommand\LR{{L_2(\R)}}
\newcommand\LRnorm[1]{\norm{#1}_\LR}
\newcommand\Matrix[2]{\hphantom{#1}_#2#1}
\newcommand\norm[1]{\left\|#1\right\|}
\newcommand\ogauss[1]{\left\lceil#1\right\rceil}
\newcommand{\QED}{\hfill
\raisebox{-2pt}{\rule{5.6pt}{8pt}\rule{4pt}{0pt}}%
  \smallskip\par}
\newcommand\Rscalar[1]{\scalar{#1}_\R}
\newcommand\scalar[1]{\left(#1\right)}
\newcommand\Scalar[1]{\scalar{#1}_{[0,1]}}
\newcommand\Span{\mathop{\rm span}}
\newcommand\supp{\mathop{\rm supp}}
\newcommand\ugauss[1]{\left\lfloor#1\right\rfloor}
\newcommand\with{\, : \,}
\newcommand\Null{{\bf 0}}
\newcommand\bA{{\bf A}}
\newcommand\bB{{\bf B}}
\newcommand\bR{{\bf R}}
\newcommand\bD{{\bf D}}
\newcommand\bE{{\bf E}}
\newcommand\bF{{\bf F}}
\newcommand\bH{{\bf H}}
\newcommand\bU{{\bf U}}
\newcommand \A {{\bb A}}
\newcommand\cH{{\cal H}}
\newcommand\sinc{{\rm sinc}}
\def\enorm#1{| \! | \! | #1 | \! | \! |}

\newcommand{\am}{a_{\min}}
\newcommand{\aM}{a_{\max}}

\newcommand{\dm}{\frac{d-1}{d}}

\let\bm\bf
\newcommand{\bbeta}{{\mbox{\boldmath$\beta$}}}
\newcommand{\bal}{{\mbox{\boldmath$\alpha$}}}
\newcommand{\bbi}{{\bm i}}

\def\nnew{\color{Red}}
\def\mnew{\color{Blue}}
\def\wnew{\color{magenta}}
\def\gnew{\color{green}}

\newcommand{\dI}{\Delta}
\newcommand\aconv{\mathop{\rm absconv}}

\maketitle
\begin{abstract}
The main theme of approximation theory is to understand how well a general function $f$
can be approximated by a simpler function $g$ such as a polynomial or spline.  In many applications, one wants  $g$ to retain known properties of $f$ such as its inherent smoothness or a geometrical property such as monotonicity or convexity.  Additional requirements on $g$ of this type are known as constraints.  In this paper, we do a systematic study of constrained approximation to understand how the imposition of such constraints limits the efficiency of the approximation.   We study constrained approximation in the setting of linear approximation where $g$ is to be taken from a finite dimensional linear space $V$ of a fixed dimension $n$. Kolmogorov widths describe how well one can approximate when using such linear spaces $V$.  The first part of this
paper introduces and studies several types of constrained widths, including the constrained Kolmogorov widths,  and gives comparisons between
them.  The second part of the paper is restricted to  classical settings where the constraint imposes a smoothness requirement on $g$.   In this case,   our results prove that the additional constraint can typically be imposed with no loss in the efficiency of the approximation.
\end{abstract}

\section{Introduction}
\label{S:elementary}
Kolmogorov widths are a measure of how well we can approximate the elements of a compact set $K$ when using linear spaces $V$ of a fixed dimension $n\ge 0$.  There are numerous generalizations of these  widths to other settings of approximation (see e.g. \cite{Pinkus,T}).   Two prominent   examples are linear widths and Gelfand widths.  We also  have  the notions  of manifold widths \cite{DHM}, stable manifold widths \cite{CDPW}, and Lipschitz widths \cite{PW},  which replace the linear spaces $V$ by more general sets.  The theory of $n$ widths is important in application domains such as numerical analysis and learning theory since they provide a benchmark for the optimal performance of numerical algorithms in the corresponding application domain.

The present paper is concerned with another type of width, called the {\it constrained Kolmogorov width}, where the approximation is again required to come from a finite $n$ dimensional linear space $V$, but the approximation is subjected to an additional constraint whose exact form depends on the targeted application domain of the approximation process such as numerical PDEs, learning, geometric design, etc.  

\subsection{ Kolmogorov, Gelfand and linear widths}
Before formally defining the constrained Kolmogorov widths, we first recall the definition of the standard Kolmogorov widths.  Let $X$ be a Banach space with norm $\|\cdot\|_X$.  We denote the collection of all compact subsets of $X$ by
\be 
\label{defcK}
\cK:=\cK(X):=\{K:\ K\subset X \ {\rm is \ compact}\},
\ee  
and by 
$$
\cK_p\subset \cK(X)
$$
the collection of those subsets of $\cK(X)$  that are   convex and centrally symmetric about the origin.
Given a compact set $K\in \cK(X)$, the $n$-th Kolmogorov width of $K$ is given by
\be 
 \label{Kolwidth}
d_0(K)_X:=\sup_{f\in K}\,\,\|f\|_X, \quad d_n(K)_X := \inf_{\dim(V)\leq n} \,\sup_{f\in K}\,\,\inf_{v\in V}\|f-v\|_X, \quad n\in\N,
\ee
where the infimum is taken over all $n$ dimensional linear subspaces $V$ of $X$.  In other words, the Kolmogorov width describes how well the elements in $K$  can be approximated by an optimally chosen $n$ dimensional
space $V\subset X$.    If a subspace $V$ of $X$ with dimension $n$ satisfies
\be 
\label{Kolspace}
\dist(K,V):=\sup_{f\in K}\,\,\inf_{v\in V}\|f-v\|_X=d_n(K)_X,
\ee
we call $V$ a {\it Kolmogorov subspace} for $K$ with respect to  the norm $\|\cdot\|_X$.  Such a space $V$ does not always exist,  but for each $\e>0$ there is always a space $V$ whose distance to $K$ is at most $d_n(K)_X+\e$.  In this case, we say $V$ is an $\e$-Kolmogorov space. 

\begin{remark} 
\label{R:firstremark} When establishing bounds for Kolmogorov widths or the constrained Kolmogorov widths introduced below, it is easier to prove these bounds under the assumption that these widths are attained by an $n$ dimensional space $V$.  In the absence of such knowledge,  one uses spaces $V$ that approximate within $\e>0$ of the width, and then takes a limit as $\e\to 0$.  We want to avoid the constant repetition of this $\e$ argument and so in the proofs given below we assume the existence of a Kolmogorov space.  The reader can easily substitute the $\e$ argument to handle the general case.  When it is not clear how the $\e$ argument works we provide details.
\end{remark}

Two other important widths which we will mention and use later are the {\it linear}  and {\it Gelfand widths}. 
For a compact set $K\subset X$, the linear widths are defined by
\begin{equation}
    \delta_n(K)_X :=\inf_{A_n} \sup_{f\in K}\|f - A_nf\|_X,\quad n\in\N,
\end{equation}
where the infimum is taken over all  linear continuous  operators $A_n:X\rightarrow X$ of rank at most $n$. The linear widths measure how well the elements of $K$ can be approximated by a rank (at most) $n$ linear map.

The Gelfand widths of a   set $K\in\cK_p(X)$ is defined by
\begin{equation}\label{gelfand-widths-definition}
    d^n(K)_X := \inf_{\lambda_1,...,\lambda_n\in X^*}\sup\{\|f\|_X:~f\in K,~\lambda_i(f) = 0, \ i=1,\dots,n\},\quad n\in\N,
\end{equation}
where the infimum is taken over any $n$ continuous linear functionals from the dual space $X^*$. 
The Gelfand widths measure how accurately elements of $K$ can be recovered from $n$ linear measurements $\lambda_1,...,\lambda_n\in X^*$. It is easy  to see  that 
$$
d_n(K)_X\leq \delta_n(K)_X \quad {\rm and} \quad d^n(K)_X\leq \delta_n(K)_X, \quad n\in\N.
$$

A significant chapter of approximation theory has been devoted to determining the Kolmogorov, Gelfand, and linear widths of a specified $K\in \cK$ and specified Banach space $X$, or at least to determine how fast 
these widths tend to zero as $n\to\infty$. The set $K$ is commonly referred to as a {\it model class} since it  serves as an assumption about the functions $f\in X$ that we wish to approximate.  A simple example to keep in mind is 
the case when  $X=C([0,1])$ is   the space of continuous functions on $\Omega=[0,1]$ with the uniform norm $\|\cdot\|_X$, and  the set $K=U({\rm Lip }\ \alpha)$, $0<\alpha\le 1$, is  the unit ball of ${\rm Lip}\ \alpha$. This  is a compact subset of $X$ and its Kolmogorov $n$ widths satisfy $d_n(K)_X\asymp n^{-\alpha}$, $n\ge 1$, with   constants of equivalency independent of both $n$  and $\alpha$.

\subsection{ Constrained Kolmogorov widths}
\label{SS:constrainedwidths}
Some applications may require  more  restrictions on the approximating functions than the requirement  that they  come from  a finite dimensional linear space $V \subset X$.  Let us denote by $\cC\subset X$  a subset of $X$, which we refer to as the {\it constraining class}.  In addition to requiring that the approximating functions come from an $n$ dimensional space $V$, we may also want to require that these approximants  come from the class $\cC$.  This setting is referred to as constrained approximation.  The most common constraints relate to  some geometric property such as positivity, monotonicity, or convexity. 
Another common constraint is simply the requirement that  the approximating function be in the model class $K$, in which case  we have $\cC=K$.  This latter example is
important in learning theory (see \cite{BBDP}).

Formally, we define the {\it constrained Kolmogorov widths} of $K$ to be
 \be 
 \label{Kolwidthc1}
d_0(K,\cC)_X := \sup_{f\in K}\|f\|_X, \quad d_n(K,\cC)_X := \inf_{\dim(V)\leq n} \sup_{f\in K}\,\inf_{v\in V\cap \cC}\|f-v\|_X, \quad n\in\N.
\ee
Note that we obviously have 
\be 
\label{KcK}
d_n(K)_X \leq d_n(K,\cC)_X,
\ee
for any $\cC\subset X$.
The constrained Kolmogorov widths \eqref{Kolwidthc1} were first introduced in \cite{konovalov1984}, where they were called relative widths. It was shown in \cite{konovalov1984} that the constrained Kolmogorov widths    can be substantially different from the standard Kolmogorov widths for certain classical smoothness classes $K$. Different choices for $\cC$ yield different constrained Komogorov widths.

In this paper, we are particularly interested in a special class  of constrained widths that arises in learning theory (see \cite{BBDP}), where one wants the approximant to
be in $K$, or at least be close to being in $K$. Our first example of such a width is 
 
 \be 
 \label{Kolwidthc}
d^{c}_n(K)_X := d_n(K,K)_X:= \inf_{\dim(V)\leq n} \sup_{f\in K}\,\inf_{v\in V\cap K}\|f-v\|_X, \quad n\in\N.
\ee
This width corresponds to taking $\cC=K$.  For example, if $K$ were the unit ball of Lip $1$, we would want the approximating functions to also have this property.

This constrained width is quite demanding and we would like to relax this constraint. To this end, for each real number $\gamma\ge 1$,  we introduce the sets
\be 
\label{inflatedsets}
K_{\gamma}:={\displaystyle\bigcup_{0\le \lambda\leq \gamma}\lambda K}. 
\ee
Note that when $K$ is convex and centrally symmetric, we have $\gamma K=K_\gamma$. We define the $\gamma$-constrained Kolmogorov width
\be 
 \label{Kolwidthnew}
d^{c,\gamma}_n(K)_X := d_n(K,K_\gamma)_X := \inf_{\dim(V)\leq n} \sup_{f\in K}\,\inf_{v\in V\cap K_{\gamma}}\|f-v\|_X, \quad n\in\N.
\ee
When $\gamma=1$, we note that for centrally symmetric convex sets $K \in\cK_p$ we have $K_1=K$, and hence
\be\label{constrw}
d^{c}_n(K)_X =d^{c,1}_n(K)_X, \quad n\geq 1.
\ee
The widths $d_n^c$ and $d_n^{c,\gamma}$ have been studied by numerous authors \cite{babenko1991,konovalov2002,malykhin2016relative,telyakovskii2001}  for particular cases of Banach spaces X.

We next introduce some further  widths that are closely related to the concept of constrained approximation. 

\subsection{Greedy Kolmogorov widths}
\label{SS:greedy}
We define the  {\it greedy Kolmogorov widths} of $K$   as
\be\label{greedywidth}
d_0^g(K)_X:=\sup_{f\in K}\,\|f\|_X, \quad d_n^g(K)_X:=\inf_{V=\span\{f_1,\ldots,f_n\}, f_i\in K} \sup_{f\in K}\,\inf_{v\in V}\|f-v\|_X,\quad n\in\N,
\ee
where the $n$ dimensional spaces $V$ allowed in the competition are spanned by elements of $K$.  This width  has been indirectly studied in the context of the construction of greedy bases which have, in turn,  been used in the study of reduced modeling (see \cite{BMPPT,BCDDPW,DPWg}).  In some settings, it is known that the decay rate of $d_n^g(K)_X$, $n\to\infty$, is comparable to the decay rate for $d_n(K)_X$.  For example, in the case when $X$ is a Hilbert space, if we know that if for some $\alpha>0$, we have  $d_n(K)_X\le Mn^{-\alpha}$, $n\ge 1$,  then it follows that $d^g_n(K)_X\le C(\alpha)Mn^{-\alpha}$, $n\ge 1$  (see \cite{BCDDPW}). 

Notice that the knowledge of $d_n^g(K)_X$ does not provide us with any direct information about $d_n^c(K)_X$ or $d_n^{c,\gamma}(K)_X$ since we do not know that the approximants are in $K_\gamma$.  On the other hand, if 
$K$ is convex and centrally symmetric, then the approximant will be in $K_\gamma $ if we can provide a bound on the coefficients in the approximation.  This approach of bounding coefficients will be studied later in this paper.

\subsection{ Restricted Kolmogorov widths}
\label{SS:restricted}
Note that one can view $K$ not only as a subset of $X$, but also as a subset of other ambient spaces.   
  For example, given a compact set $K$, let $\cX:= \cX(K)$  be  the closure of the finite linear span $S(K)$ of $K$ with respect to  $\|\cdot\|_X$.  We equip $\cX$ with    the norm in $X$. That is,  $\|g\|_{\cX}:=\|g\|_X$  when $g\in \cX$.  Then, $\cX(K)\subset X$ is the smallest subspace of $X$ which contains $K$.  With this notation in hand, we define the {\it restricted Kolmogorov width} of $K$ to be
\be 
\label{restrictedwidth}
 d_0^r(K)_X:=\sup_{f\in K}\,\|f\|_X, \quad d^{r}_n(K)_X:=d_n(K)_{\cX}= \inf_{V\subset \cX, {\rm dim}(V)\leq n} \sup_{f\in K}\,\inf_{v\in V}\|f-v\|_X,\quad  n\in\N.
\ee 
This restricted width has been studied in other contexts (see
\cite{I,O,OO}). Note that 
the restricted width is a constrained width, namely, it can also be defined by
$$
d^{r}_n(K)_X:=d_n(K,\cX(K))_X.
$$

The reader should notice that in both the greedy and constrained widths, the norm in which the error of approximation is measured continues to be $\|\cdot\|_X$.
The added property imposed in these widths is to limit the $n$ dimensional spaces $V$ which can be used to do the approximation.  In greedy widths, these $n$ dimensional spaces are required to be spanned by the elements of $K$, whereas in the restricted widths the spaces $V$ must be spanned by elements of $\cX(K)$.  It follows therefore that 
\be\label{obs1}
d^{r}_n(K)_X\leq d^g_n(K)_X, \quad n\in\N.
\ee
One may expect that these two widths are the same.  
This is indeed the case 
for $K\in\cK_p$, that is 
 for convex, centrally symmetric sets, as  shown in \S\ref{S:rgcwidths}.

\subsection{The goals of this paper}
\label{SS:goals}

By now, we have presented several different types of widths.  
Our main interest in this paper are the $\gamma$-constrained  Kolmogorov  widths $d_n^{c,\gamma}(K)_X$, while the  greedy and restricted Kolmogorov widths,  $d_n^g(K)_X$ and  $d_n^r(K)_X$,   have been  introduced as tools to better understand 
the behavior of $d_n^{c,\gamma}(K)_X$.
The goal of this paper is to investigate the asymptotic behavior of these  constrained widths of a compact set $K$ as $n\to \infty$.  In particular, we want to understand
if and when the addition of the constraint denegrates the rate of approximation.

We obtain results of two flavors.  The first type of results are statements for constrained  or $\gamma$-constrained Kolmogorov widths 
that hold for any compact set $K$ or for any convex centrally symmetric compact set $K$ and any Banach space $X$.  In this generality,  we shall see that there may be a severe loss in the rate of constrained approximation.  Result of this type are described and discussed in   \S2-\S4.

The second set of results we obtain (starting in \S5) apply to specific classical compact sets $K$ and  particular Banach spaces $X$ such as the $L_p$ spaces.  The  compact sets $K$ that 
arise in classical settings of approximation theory studied in this paper are Besov classes and approximation classes.  Our main results for these classical settings will show that $\gamma$-constrained Kolmogorov widths have the same asymptotic rate of decay as the Kolmogorov widths.




\section{Some properties of Kolmogorov widths}
\label{PKol}
We start this section with the observation that the Kolmogorov widths of a set $K$ and its convexification $\widetilde K$ are the same. More precisely, 
for a given a compact set $K\in X$, we denote its symmetrized convex hull by 
\be 
\label{defconvexhull}
\widetilde K:=\overline{{\rm conv}(-K\cup K)},
\ee
where the closure is taken with respect to $\|\cdot\|_X$. 
It is easy to see that,  (see Chapter 13, \cite{LGM}),
\be 
\label{convexify}
d_n(K)_X=d_n(\widetilde K)_X,\quad n\ge 0.
\ee 
 Indeed, since $K\subset \widetilde K$ we have that that $d_n(K)_X\leq d_n(\widetilde K)_X$. The reverse inequality 
 holds because whenever a linear space $V$ approximates a set $K$ to a given accuracy then it approximates 
$-K\cup K$ to the same accuracy.  In turn, it approximates the convex hull of $-K\cup K$  and thus  its  closure to the same accuracy. 

Note that an equality of this type  does not generally hold for the $\gamma$-constrained Kolmogorov widths.     However, it is  easy to check that  for any compact set $K$ we have 
\be 
\label{constconvex}
 d_n^{c,\gamma}(\widetilde K)_X \leq d^{c,\gamma}_n(K)_X,\quad  \ n\in\N.
\ee

We next prove the following theorem for the Kolmogorov widths  which we will use later for particular choices of Banach spaces $Y$ and $Z$.   We consider any Banach space $Z$ and any subspace $Y$ of $Z$. 
 Note that this assumption means that $\|\cdot\|_Y$ is the restriction of $\|\cdot\|_Z$ onto $Y$.  
\begin{theorem}
\label{TABS}
 For every Banach space  $Z$,  every subspace  $Y$ of $Z$, and any compact set  $K\in\cK(Y)$, 
 we have that 
\begin{equation}
\label{T:abs}    
d_{n}(K)_Z\leq d_{n}(K)_Y\leq (1+\sqrt{n})d_n(K)_Z,\quad n\ge 0.
\end{equation}
If  $Z$ is a Hilbert space, then
\be 
\label{philbertbound}
d_{n}(K)_Y=  d_n(K)_Z,\quad n\ge 0.
\ee
\end{theorem}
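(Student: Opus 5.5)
The lower bound $d_n(K)_Z\le d_n(K)_Y$ is immediate. Every subspace $V\subset Y$ with $\dim V\le n$ is also a subspace of $Z$, and the norms agree on $Y$. Hence every competitor in the infimum defining $d_n(K)_Y$ is also a competitor for $d_n(K)_Z$, and it yields the same error. The whole content of the theorem is therefore the reverse inequality. Following Remark \ref{R:firstremark}, I will assume there is a Kolmogorov space $V\subset Z$ with $\dim V\le n$ and $\dist(K,V)_Z=d_n(K)_Z=:d$. The task is to replace $V$ by a space $W\subset Y$ of dimension at most $n$ with $\dist(K,W)\le(1+\sqrt n)\,d$.

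For the general Banach case I will use a projection argument. By the Kadec--Snobar theorem (an application of Hahn--Banach), any $m$-dimensional subspace $W$ of a Banach space $Y$ admits a projection $P:Y\to W$ with $\|P\|\le\sqrt m$. The difficulty is that $V$ does not lie in $Y$, so it must first be traded for a subspace of $Y$ that is close to $V$.

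I would do this in four steps.
\begin{enumerate}
\item Choose $W\subset Y$ with $\dim W\le n$ such that every $f\in K$ is within about $d$ of $W$ in a suitable sense. The natural candidate comes from $V\cap$ (closure of $Y$). Since $K\subset Y$, each best approximation $v_f\in V$ to $f$ lies within distance $d$ of $Y$. So I would replace $V$ by the span $W$ of finitely many elements of $Y$, for instance elements of $K$ itself, chosen close to a basis of the part of $V$ that is actually used.
\item Let $P:Y\to W$ be a projection with $\|P\|\le\sqrt n$.
\item For $f\in K$, write $f-Pf=(I-P)(f-g)$ for any $g\in W$. This gives $\|f-Pf\|\le(1+\sqrt n)\dist(f,W)$.
\item Conclude $d_n(K)_Y\le\dist(K,W)\le(1+\sqrt n)\,d$.
\end{enumerate}
The factor $1+\sqrt n$ then appears exactly as $\|I-P\|\le 1+\|P\|$.

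The main obstacle is step 1: obtaining $W\subset Y$ with $\dim W\le n$ and $\dist(f,W)\le d$, up to $\e$, for all $f\in K$ simultaneously. I would first try to arrange this with a projection onto $V$ instead. Extend $V$ within $Z$, take a projection $Q:Z\to V$ with $\|Q\|\le\sqrt n$, and note that $Q|_K$ maps $Y$ into $V$. Then I would perturb $V$ into $Y$. Because $K$ is compact, $Q(K)$ is a compact subset of $V$ lying within $(1+\sqrt n)d$ of $K\subset Y$. Choosing a basis of $V$ and approximating it by elements of $Y$ should give a space $W\subset Y$ with $\dist(K,W)\le(1+\sqrt n)d+\e$. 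The estimate is $\|f-Qf\|\le\|(I-Q)(f-v_f)\|\le(1+\sqrt n)\,d$, and the approximation $Qf\approx w\in Y$ must be controlled uniformly on $K$ by compactness. This is where I expect the real work, since elements of $V$ need not be approximable from $Y$. Because everything we actually need, namely $Qf$ close to $f\in Y$, is within $(1+\sqrt n)d$, I would set $W=$ the span of $n$ points of $K$ selected by a compactness and net argument. If that fails, I would fall back to the projection argument above in $Y$ with $\dist(f,W)$ measured through $V$.

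In the Hilbert case, let $P$ be the orthogonal projection of $Z$ onto $\overline{Y}$. For any $V\subset Z$ with $\dim V\le n$, the space $PV$ has dimension at most $n$. For $f\in K\subset Y$ and $v\in V$, we have $\|f-Pv\|=\|P(f-v)\|\le\|f-v\|$, since $Pf=f$. Thus $\dist(K,PV)\le\dist(K,V)$. Finally, $PV$ lies in $\overline Y$, and approximating its finitely many basis vectors from $Y$ costs only $\e$. This gives $d_n(K)_Y\le d_n(K)_Z$, and combined with the lower bound it yields \eqref{philbertbound}.
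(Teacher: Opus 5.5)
Your lower bound and your Hilbert-space argument are fine. The Hilbert case is exactly the paper's mechanism: a norm-one projection onto $Y$ that fixes $K$, applied to the approximating space.

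The general Banach case, however, has a genuine gap. Steps 2--4 accomplish nothing. The Kolmogorov width only needs $\dist(K,W)$, and the bound $\|f-Pf\|\le(1+\sqrt n)\dist(f,W)$ is weaker than $\dist(f,W)$ itself. So the factor $1+\sqrt n$ is inserted artificially, and everything rests on step 1.

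Step 1 as stated is impossible in general. Finding $W\subset Y$ with $\dim W\le n$ and $\dist(K,W)\le d+\e$ would give $d_n(K)_Y=d_n(K)_Z$ in every Banach space. That contradicts Theorem \ref{T:sharp}: take $Y=\cX(K)$, so that $d_n(K)_Y=d^r_n(K)_Z$, which can exceed $d_n(K)_Z$ by a factor of order $\sqrt n$. The $\sqrt n$ loss must therefore be paid in the passage from $Z$ to $Y$, not in a projection inside $Y$.

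Your fallbacks do not supply this passage:
\begin{itemize}
\item A Kadec--Snobar projection $Q$ onto $V$ produces approximants $Qf\in V$. A basis of $V$ generally cannot be approximated from $Y$: if $Y$ is closed and $V\not\subset Y$, some elements of $V$ lie at positive distance from $Y$.
\item Taking $W$ to be the span of $n$ points of $K$ is a greedy choice. For general compact $K$ this can lose a factor $n-1-\e$ (Theorem \ref{knownT1}, in a Hilbert space where $d_n(K)_Y=d_n(K)_Z$), which exceeds $1+\sqrt n$ once $n\ge 5$.
\end{itemize}

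The missing idea is to project onto $Y$, as you did in the Hilbert case, rather than onto an $n$-dimensional space. Set $\tilde Z:=Y+V$; then $Y$ has codimension at most $n$ in $\tilde Z$. By Garling--Gordon (Theorem 8 in \cite{GG}), for every $\delta>0$ there is a projection $Q_\delta$ of $\tilde Z$ onto $Y$ with $\|Q_\delta\|\le 1+\sqrt n+\delta$. Take $W:=Q_\delta(V)\subset Y$, which has dimension at most $n$. For $f\in K$ and a near-best $v_f\in V$, we have $Q_\delta f=f$, so
$\|f-Q_\delta v_f\|=\|Q_\delta(f-v_f)\|\le(1+\sqrt n+\delta)\,\|f-v_f\|$.
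Letting $\delta,\e\to 0$ gives the bound.

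Thus $1+\sqrt n$ is the projection constant of a codimension-$n$ subspace, not $1+\|P\|$ for a projection onto an $n$-dimensional one. This is the paper's proof, and it reduces to your Hilbert argument when the projection can be chosen of norm one.
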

\noindent
{\bf Proof:}   Notice that the difference in these two Kolmogorov  widths is that $d_n(K)_Z$ allows the $n$ dimensional subspaces $V$ to come from the larger Banach space $Z$,  while $d_n(K)_Y$ requires them to come from  the smaller space $Y$. Therefore, the  first inequality in \eref{T:abs} follows trivially

To show the second inequality in \eref{T:abs}, we fix $n$ and  let $\e>0$ be arbitrary. We 
  choose $Z_n\subset Z$, with $\dim Z_n=n$, such that 
$$
\sup_{f\in K} \inf_{g\in Z_n} \|f-g\|_{Z} \le d_n(K)_Z+\varepsilon.
$$
 Let us define 
$$
\tilde Z:= Y+Z_n.
$$
Clearly $Y$ is a subspace of $\tilde Z$ of codimension $\leq n$ . It follows that for each $\delta>0$,  there exists a linear projector $Q_\delta:\tilde Z\to Y$ onto $Y$  such that (see Theorem 8 in \cite{GG}),

\be 
\label{normQ}
\|Q_\delta\|\leq (1+\sqrt n+\delta).
\ee
We now consider the linear space $Y_n:=Q_\delta(Z_n)\subset X$ which has dimension at most $n$. 

 If $f\in K$, let $g(f)\in Z_n$ be such that 
$$
\|f-g(f)\|_Y=\|f-g(f)\|_{Z} \le d_n(K)_Z+\varepsilon.
$$
Then $g^*(f):=Q_\delta(g(f))\in Y_n$, and we have
\be
\label{fXn}
\|f-g^*(f)\|_Y=\|Q_\delta(f-g(f))\|_Y\le \|Q_\delta\|\|f-g(f)\|_Z\le (1+\delta+\sqrt{n})(d_n(K)_Z+\e).
\ee
It follows that
\be 
   d_n(K)_Y \leq (1+\sqrt n+\delta) (d_n(K)_Z+\epsilon).  
\ee
Letting $\epsilon,\delta\to 0$ gives  \eref{T:abs}.
When $Z$ is a Hilbert space the projection from $\tilde Z$ onto $Y$ can be chosen of norm one and therefore \eref{philbertbound} follows.  This completes the proof of the theorem.
\hfill $\Box$

\section{Elementary properties and comparisons between constrained, $\gamma$-constrained, greedy,  and restricted Kolmogorov  widths}
\label{SS:elementary}

In this section,  we will describe some elementary properties of the constrained Kolmogorov widths. We start with recalling the  notion of entropy numbers   $\varepsilon_n(K)_X$ of a compact set $K$.  Given a $g\in X$ and $r>0$, we let $B(g,r):=B(g,r)_X$ denote the closed ball in $X$ of radius $r$ centered at $g$.
For every fixed $n\geq 0$, the entropy number $\varepsilon_n(K)_X$ is the infimum over all 
$\varepsilon>0$ for which $2^n$  balls
with centers from $X$ and radius $\varepsilon$ cover $K$. 
\be
\label{defentropy}
\epsilon_n(K)_X:= \inf\{\epsilon > 0:\, K\subset 
\bigcup_{j=1}^{2^n}B(g_j,\varepsilon), \,g_j \in X, \quad
j = 1, . . . , 2^n\}.
\ee

In some applications, one wants the centers of the covering of $K$ to come from $K$ itself.  This leads to the definition of the inner entropy numbers
\be 
\label{innerentropy}
\tilde \epsilon_n(K)_X:= \inf\{\epsilon > 0:\, K\subset 
\bigcup_{j=1}^{2^n}B(f_j,\varepsilon), \,f_j \in K, \,
j = 1, . . . , 2^n\}.
\ee
An elementary comparison between these to notions of entropy is (see \cite{CS}),  
\be
\label{dompare}
\epsilon_n(K)_X\leq \tilde \varepsilon_n(K)_X\leq 2\varepsilon_n(K)_X.
\ee
The following lemma provides some elementary properties and comparisons between the widths we have introduced.

\begin{lemma}
\label{L1}
If  $X$ is any Banach space and  $K\in\cK(X)$, then  the  following elementary properties hold for  $n=0,1,\dots$.
\vskip .1in
     {\rm (i)} If $ L\subset K\subset \lambda L$, with  $K,L\in \cK_p(X)$, and $\lambda \ge 1$, then $d^{c,\lambda \gamma}_n(L)_X\le d_n^{c,\gamma}(K)_X$ for all $\gamma>0$.
    Secondly, for any $\alpha,\gamma$, we have
 $
d^{c,\alpha \gamma}_n(\alpha K)_X=
\alpha  d_n^{c,\gamma}( K)_X$.
 \vskip .1in
     {\rm (ii)}
   For any $1\leq \gamma\leq \delta$, we have
    $d^{c,\delta}_n(K)_X\leq d^{c,\gamma}_n(K)_X$. 
   \vskip .1in    
       {\rm (iii)} For every $\gamma\geq 1$, we have $d^{c,\gamma}_{n+1}(K)_X\leq  d^{c,\gamma}_n(K)_X$  and $d^{c}_{n+1}(K)_X\leq  d^{c}_n(K)_X$.
       \vskip .1in 
        {\rm (iv)} For any $\gamma\geq 1$, we have $$d_n(K)_X\leq d^{r}_n(K)_X\leq 
        d^g_n(K)_X\leq
        \lim_{\gamma\to\infty}
        d^{c,\gamma}_n(K)_X\leq
        d^{c,\gamma}_n(K)_X\leq d^{c,1}_n(K)_X\leq d^{c}_n(K)_X.$$
       
   \vskip .1in
   {\rm (v)} We have $
   d^c_{2^n}(K)_X\leq 2\epsilon_n(K)_X.
   $
   \vskip .1in
   {\rm (vi)}  We have $ d^c_n(K)_X\to 0$, as $n\to \infty$.
\end{lemma}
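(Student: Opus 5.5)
The six items are essentially set-theoretic monotonicity statements, and I will verify them one at a time directly from the definitions \eqref{Kolwidthc1}, \eqref{Kolwidthnew}, \eqref{greedywidth}, \eqref{restrictedwidth}. The recurring principle is this: enlarging the constraining class $\cC$, or enlarging the family of admissible spaces $V$, can only decrease $\inf_{v\in V\cap\cC}\|f-v\|_X$. Shrinking the set being approximated can only decrease the supremum over $f$.

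\textbf{Items (i)--(iii).} For (i), I fix a space $V$ that is nearly optimal for $d_n^{c,\gamma}(K)_X$. Since $K\in\cK_p$, we have $K_\gamma=\gamma K\subset \gamma\lambda L=L_{\lambda\gamma}$. Also $L\subset K$. Hence
\[
\sup_{f\in L}\inf_{v\in V\cap L_{\lambda\gamma}}\|f-v\|_X\le \sup_{f\in K}\inf_{v\in V\cap K_\gamma}\|f-v\|_X .
\]
The scaling identity follows because $(\alpha K)_{\alpha\gamma}=\alpha K_\gamma$ by \eqref{inflatedsets}, and $V$ is invariant under $v\mapsto \alpha v$. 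Thus $f\mapsto\alpha f$ and $v\mapsto \alpha v$ give a bijection that multiplies all errors by $\alpha$ (taking $\alpha>0$). For (ii), note $K_\gamma\subset K_\delta$. For (iii), the infimum for $n+1$ is taken over a family containing all spaces of dimension $\le n$.

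\textbf{Item (iv), the chain.} I will prove the inequalities from left to right.
\begin{itemize}
\item $d_n\le d_n^r$, because $\cX\subset X$.
\item $d_n^r\le d_n^g$, which is \eqref{obs1}: a span of elements of $K$ lies in $\cX$.
\item The limit as $\gamma\to\infty$ exists by (ii), since the widths are monotone and nonnegative. It is at most $d_n^{c,\gamma}$ by (ii).
\item $d_n^{c,1}\le d_n^c$ holds because $K\subset K_1$.
\item $d^g_n\le d^{c,\gamma}_n$ for every $\gamma$ is the only step needing an idea. Given an admissible $V$ for $d^{c,\gamma}_n$, I replace it by $V':=\span(V\cap K_\gamma)$. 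This space has dimension $\le n$. Every nonzero element of $K_\gamma$ is $\lambda g$ with $g\in K$, so $V'$ is spanned by at most $n$ elements of $K$. Since $V\cap K_\gamma\subset V'$, the unconstrained error from $V'$ is no larger than the constrained error from $V$. If $\dim V'<n$, I pad it with further elements of $K$ (or note that monotonicity in $n$ allows fewer spanning elements).
\end{itemize}
Taking the infimum over $V$, then letting $\gamma\to\infty$, gives $d_n^g\le\lim_\gamma d_n^{c,\gamma}$.

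\textbf{Item (v).} I use \eqref{dompare} to choose centers $f_1,\dots,f_{2^n}\in K$ of radius $\tilde\e_n(K)_X+\e\le 2\e_n(K)_X+\e$. I set $V:=\span\{f_j\}$, which has dimension $\le 2^n$. Each $f\in K$ is within that radius of some $f_j\in V\cap K$. Letting $\e\to0$ finishes the proof.

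\textbf{Item (vi).} Compactness gives $\e_n(K)_X\to0$. By (v) and the monotonicity (iii), for $2^m\le N<2^{m+1}$ we have $d^c_N\le d^c_{2^m}\le 2\e_m\to0$.

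The main obstacle is the step $d_n^g\le d_n^{c,\gamma}$, specifically the bookkeeping that $\span(V\cap K_\gamma)$ is generated by at most $n$ elements of $K$. I expect that the careful point there is to extract a basis from $V\cap K_\gamma$ and rescale each basis element back into $K$.
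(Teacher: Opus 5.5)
\textbf{Where the proposal matches the paper.} Items (ii)--(vi) and the first half of (i) are correct and follow the paper's proof essentially line by line:
\begin{itemize}
\item the inclusion $K_\gamma=\gamma K\subset\lambda\gamma L=L_{\lambda\gamma}$ for the first half of (i);
\item inclusion of constraint sets for (ii), and nestedness of the admissible families of spaces for (iii);
\item the passage from $V$ to the span of $V\cap K_\gamma$ for $d^g_n\le d^{c,\gamma}_n$, where you supply the basis-extraction and rescaling detail that the paper only asserts;
\item the covering by $2^n$ balls centered in $K$ for (v), and dyadic monotonicity for (vi).
\end{itemize}

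\textbf{The gap: the scaling claim in (i).} The set identity $(\alpha K)_{\alpha\gamma}=\alpha K_\gamma$ that you invoke is false. By \eqref{inflatedsets}, for $\alpha>0$ one has $(\alpha K)_\beta=\bigcup_{0\le\mu\le\beta}\mu\alpha K=\alpha K_\beta$. Hence $(\alpha K)_{\alpha\gamma}=\alpha K_{\alpha\gamma}$, which for $K\in\cK_p$ is $\alpha^2\gamma K$, not $\alpha\gamma K$. Your bijection $f\mapsto\alpha f$, $v\mapsto\alpha v$ therefore proves only $d^{c,\gamma}_n(\alpha K)_X=\alpha\, d^{c,\gamma}_n(K)_X$, with the same parameter $\gamma$ on both sides.

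\textbf{The printed identity cannot be proved.} Given the correct scaling law, the printed identity is equivalent to $d^{c,\alpha\gamma}_n(K)_X=d^{c,\gamma}_n(K)_X$ for all $\alpha>0$, i.e.\ to the constrained widths being independent of $\gamma$. That fails in general. Take $K=U(\ell_1(\R^m))$ in $\ell_\infty(\R^m)$:
\begin{itemize}
\item Lemma \ref{L2} together with \eqref{constrw} gives $d^{c,1}_n(K)\ge 1/2$ for $n<m$.
\item Theorem \ref{T:gr} gives $d^{c,\gamma}_n(K)\to d^r_n(K)$ as $\gamma\to\infty$, and $d^r_n(K)=d_n(K)$ here because $\cX(K)=\R^m$.
\item By the Kashin bound in Lemma \ref{L2}, $d_n(K)<1/2$ for, e.g., $m=2n$ with $n$ large.
\end{itemize}
So this half of (i), as printed, is not true, and no argument can close the step.

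\textbf{The paper has the same slip.} Its approximant $g=\alpha g_0$ is certified only to lie in $\alpha\gamma K=(\alpha K)_\gamma$, which is the $\gamma$-constraint for $\alpha K$. The paper then treats this as the $\alpha\gamma$-constraint $(\alpha K)_{\alpha\gamma}=\alpha^2\gamma K$. Consequently, its ``reverse the roles'' step does not return to the parameter $\gamma$.

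\textbf{What you should state instead.} Prove $d^{c,\gamma}_n(\alpha K)_X=\alpha\, d^{c,\gamma}_n(K)_X$ for $\alpha>0$; this is presumably the intended content, and your bijection proves it. Via (ii), it yields the one-sided bound $d^{c,\alpha\gamma}_n(\alpha K)_X\le\alpha\, d^{c,\gamma}_n(K)_X$ when $\alpha\ge1$.
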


\noindent{\bf Proof:} For the proof of the first statement in (i),  let $\delta_n:= d_n^{c,\gamma}(K)_X$. We assume that  there is an  $n$-dimensional  space $X_n$ that 
realizes $\delta_n$; otherwise one should use the $\e$-argument of Remark \ref{R:firstremark}. If $f\in L$, then $f\in K$ and so there is a $g\in X_n$ such that $\|f-g\|_X\le \delta_n$ and $g\in \gamma K$.
It follows that $g\in \lambda \gamma L$ and so $d_n^{c,\lambda\gamma}(L)_X\le \delta_n$ as desired. Similarly,  for the second statement of (i), let $V$ again attain $\delta_n^{c,\gamma}(K)_X$.  If $f\in\alpha K$ then $f=\alpha f_0
$ with $f_0\in K$.  Therefore, there is a $g_0\in V$ that approximates $f_0$ to accuracy $\delta_n$ and $ g_0\in K_\gamma=\gamma K$. Hence, $g:=\alpha g_0$ is in $V$ and approximates  $f$ to the accuracy $\alpha \delta_n$.  Moreover, $g\in \alpha\gamma  K$.  This gives the inequality $d_n^{c, \alpha\gamma }(\alpha K)_X \le \alpha d_n^{c,\gamma}(K)$.
If we reverse the roles of $K$ and $\alpha K$,  we  obtain $d_n^{c,\gamma}(K)_X\le \alpha^{-1} d_n^{c,\alpha\gamma}(\alpha K)_X$, as desired.

The monotonicity claimed in (ii)  follows from the fact that   $K_\gamma\subset K_{\delta}$, when $\gamma\le\delta$. Claim (iii) is obvious. 

We now turn to the proof of (iv). The first inequality in (iv) follows from the definition of the restricted width. The next inequality was given in \eref{obs1}.  
We next  prove that
\be 
\label{firstshow}
d_n^g(K)_X\le d_n^{c,\gamma}(K)_X, \quad \gamma\ge 1,
\ee 
To prove this, we fix $\gamma\geq 1$ and note that 
for any $n$ dimensional subspace $V\subset X$, we have 
$$
V\cap K_\gamma\subset \span\{f_1, \ldots,f_n\},
$$
for some $f_1, \ldots,f_n\in K$, and therefore
\be
d_n^g(K)_X\le \sup_{f\in K}\inf_{g\in V\cap K_\gamma}\|f-g\|_X.
\ee
Taking an infimum over all $n$ dimensional spaces $V$, we arrive at
$$
d_n^g(K)_X\le d_n^{c,\gamma}(K)_X,
$$
which is \eref{firstshow}.  Next,  note that  $d_n^{c,\gamma}(K)_X$ is a monotone decreasing function of $\gamma$, and so we have 
$$
d_n^g(K)_X\le \lim_{\gamma\to\infty}
        d^{c,\gamma}_n(K)_X,
$$
which is the third inequality in (iv).  The remainder of inequalities in (iv) are obvious.

To prove (v), 
let us fix $n$ and take  any  $\varepsilon>\tilde \varepsilon_n$.    Let   $f_j\in K$, $j=1,\ldots,2^n$, be the centers of  balls $B(f_j,\varepsilon)$ that cover $K$. Clearly, if $V_{2^n}:=\span\{f_j\}_{j=1}^{2^n}$, then every $f\in K$ can be approximated by at least one of the $f_j$'s up to accuracy $\varepsilon$. Therefore, we have
$d^c_{2^n}(K)_X\leq \varepsilon$.
Since $\epsilon $ can be taken arbitrarily close to $ \tilde\e_n$, using \eref{dompare}, we have
 
$$
d^c_{2^n}(K)_X\leq \tilde \e_n(K)_X\leq 2\e_n(K)_X.
$$

To prove (vi), we note that
 for compact sets $K$ we have $\e_n(K)_X\to 0$ as $n\to\infty$.  Therefore, from (v),  we  know that $d^c_{2^n}(K)_X\to 0$ as 
$n\to\infty$. The statement then follows from the monotonicity of the constrained Kolmogorov widths as stated in (iii).
\hfill $\Box$

 \bigskip

We next want to show  that part (v) of Lemma \ref{L1} cannot be improved.
Let us denote  by $\ell_p$, $ 0<p\le \infty$,
the infinite dimensional $\ell_p$ space with its canonical basis $e_i$, $i=1,2,\dots$,  where each $e_i$ has coordinate one in the $i$-th position and coordinate zero in all other positions.
 We let $\ell_p(\R^m)$ denote the Banach space $\R^m$ equipped with the $\ell_p$ (quasi-)norm when $0<p\le\infty$.

\begin{lemma}
    \label{Lopt}
For the  Hilbert space $X=\ell_2$,  there is a compact subset $K\subset X$  such that
\be 
\label{entropyclaim}
d^c_{2^n}(K)_{\ell_2}\ge d_{2^n}(K)_{\ell_2}\ge \frac{1}{2\sqrt{2}}\e_n(K)_{\ell_2},\quad n\ge 2.
\ee
\end{lemma}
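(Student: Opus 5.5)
The plan is to take $K$ to be a countable union of finite blocks of scaled canonical basis vectors, with $t_k\to 0$. For such a set the entropy numbers are controlled by simply counting points. The Kolmogorov widths, on the other hand, are bounded below by the standard averaging argument for orthonormal systems. The first inequality in \eref{entropyclaim} is just \eref{KcK} with $\cC=K$, so only the second needs proof.

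\textbf{Construction.} Partition $\N$ into consecutive disjoint blocks $B_1,B_2,\dots$ with $|B_k|=2^{k+1}$. Set $t_k:=2^{-k}$ and
$$
K:=\{0\}\cup\bigcup_{k\ge 1}\{t_k e_j:\ j\in B_k\}.
$$
$K$ is compact because its points accumulate only at $0$, since $t_k\to0$ and each block is finite.

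\textbf{Entropy upper bound.} Fix $n\ge 2$. Place a ball of radius $0$ at each point $t_k e_j$ with $k\le n-2$; there are $\sum_{k=1}^{n-2}2^{k+1}=2^n-4$ such points. Add one ball centered at $0$ of radius $t_{n-1}$. That ball contains $0$ and every point $t_ke_j$ with $k\ge n-1$, because $\|t_ke_j\|=t_k\le t_{n-1}$. This uses at most $2^n$ balls, so $\e_n(K)_{\ell_2}\le t_{n-1}=2^{-(n-1)}$. The condition $n\ge2$ ensures that block $n-1$ exists.

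\textbf{Width lower bound.} Let $V$ be any subspace with $\dim V\le N:=2^n$, and let $P$ be the orthogonal projection onto $V$. Put $m:=|B_n|=2^{n+1}$. For the orthonormal vectors $e_j$, $j\in B_n$, we have $\sum_{j\in B_n}\|Pe_j\|^2\le {\rm trace}(P)\le N$. Hence
$$
\sum_{j\in B_n}\|e_j-Pe_j\|^2\ge m-N.
$$
So some $j\in B_n$ satisfies $\dist(e_j,V)^2\ge 1-N/m=1/2$. This gives
$$
d_{2^n}(K)_{\ell_2}\ge t_n/\sqrt2 = 2^{-n}/\sqrt2=\tfrac{1}{2\sqrt2}\,2^{-(n-1)}\ge \tfrac1{2\sqrt2}\,\e_n(K)_{\ell_2}.
$$

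The only real step is this trace/averaging bound for orthonormal systems, which is routine. The rest is the bookkeeping of matching the block sizes and the scales $t_k$: the block at level $n$ must be twice the dimension $2^n$, and the ratio $t_n/t_{n-1}=1/2$ then produces exactly the constant $1/(2\sqrt2)$.
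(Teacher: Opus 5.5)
Your proof is correct and follows essentially the same route as the paper. Both use scaled canonical basis vectors that are constant on dyadic-sized blocks, bound the entropy by centering balls at the large points plus one ball at the origin, and use the lower bound $\sqrt{1-N/m}$ with $m=2N$ for how far $m$ orthonormal directions lie from an $N$-dimensional subspace. The only differences are cosmetic: you use geometric scales $t_k=2^{-k}$ where the paper uses $1/k$, and you prove the averaging bound directly via ${\rm trace}(P)\le N$ instead of passing to $\widetilde K$ and citing $d_N(U(\ell_1(\R^m)))_{\ell_2(\R^m)}=\sqrt{1-N/m}$.
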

\noindent
{\bf Proof:}   
To construct such a set $K$, we consider the sequence $(a_j)_{j\ge 1} $, where $a_1:=a_2:=1$ and 
\be
 a_{2^{k}+1} =\cdots=a_{2^{k+1}}:=\frac{1}{k}, \quad k\ge 1.
 \ee
 We take  the  canonical basis $e_1,e_2,\dots,$ for $\ell_2$ and  define the set 
 $$
  K:=\{0,a_1e_1,a_2e_2,\dots\}\subset \ell_2.
$$
Since $a_k\to 0$, as $k\to\infty$, the set $K$ is compact in $\ell_2$.

Next, we prove that
\be 
\label{entabove}
\varepsilon_{n+1}(K)_{\ell_2}\le  \frac{1}{n},\quad n\ge 1.
\ee
Consider the  balls of radius $\frac{1}{n}$ about the points 
\be
\label{firstballs}
0,a_1e_1,a_2e_2,\dots,   a_{2^n}e_{2^n}.
\ee
   There are $2^n+1\le 2^{n+1}$ such balls. We now show that  they cover $K$. Certainly, they cover the points listed in \eref{firstballs}.   Consider any point $a_ke_k$ with $k\ge 2^n + 1$.  It has norm at most $\frac{1}{n}$  and therefore  is in the ball $B(0,\frac{1}{n})$.  This proves that these balls cover $K$
   and  completes the proof of \eref{entabove}.

Finally, we can bound $d_{2^n}(K)_X$ from below as follows.  Let $\widetilde K$ be the closure (in $\ell_2$) of the convex hull of 
$-K\cup K$. From  (iv) in Lemma \ref{L1} and \eref{convexify}, we have 
$$ 
d_N^c(K)_{\ell_2}\ge d_N( K)_{\ell_2}=d_N(\widetilde K)_{\ell_2},\quad N\ge 1.
$$
We next bound $d_N(\widetilde K)_{\ell_2}$ from below.  For this, we recall (see Theorem 3.3 on page 411 of \cite{LGM}) that for the unit ball $U(\ell_1(\R^m)):=\{x\in \mathbb{R}^m,~\|x\|_{\ell_1(\R^m)}\leq 1\}$ of $\ell_1(\R^m)$  in $\ell_2(\R^m)$, we have
\be 
\label{lb11}
d_N(U(\ell_1(\R^m)))_{\ell_2(\R^m)} =\sqrt{1-\frac{N}{m}},\quad 0\le N\le m.
\ee 
Given a value of $m$,  we let $\tilde e_1,\dots,\tilde e_m$ be the vectors in $\R^m$ formed by the first $m$ coordinates of $e_1,\dots, e_m$, that is, the vectors $\tilde e_1,\dots, \tilde e_m$ are the standard basis of $\R^m$.   The convex hull $\Lambda_m$ of ${\pm}a_m\tilde e_1,\dots,{\pm}a_m\tilde e_m$ is $a_m U(\ell_1(\R^m))$. Since $(a_j)_{j\ge 1}$ is a nonincreasing sequence, we have
\be 
\label{obviously}
d_N(\widetilde K)_{\ell_2}\ge a_m d_N(U(\ell_1(\R^m))_{\ell_2(\R^m)}  =a_m\sqrt{1-\frac{N}{m}},\quad 1\le N\le m.
\ee
Hence, taking $N=2^n$ and $m=2^{n+1}$, and using \eref{entabove}, we have  
\begin{eqnarray}
d_{2^n}( K)_{\ell_2} &=& d_{2^n}(\widetilde K)_{\ell_2}\ge  
\frac{1}{\sqrt{2}}a_{2^{n+1}}
= \frac{1}{\sqrt{2}}\cdot \frac{1}{n} \ge \frac{1}{2\sqrt{2}(n-1)}\geq \frac{1}{2\sqrt{2}}\e_n(K)_{\ell_2},\quad n\ge 2,
\end{eqnarray}
which  proves  the statement of the lemma.
\hfill $\Box$

The bound (v) of Lemma \ref{L1}  is very weak but it does hold for a general Banach space $X$ and any compact set $K\in\cK(X)$.  The main goal of this paper is aimed at giving much stronger rates of convergence for constrained widths when more is known about $X$ and $K$.  Before formulating these stronger convergence results, we first establish some fundamental comparisons between the different types of constrained Kolmogorov widths and the standard Kolmogorov widths that hold for general compact sets.

\section{Comparisons between various constrained Kolmogorov   widths}
\label{S:rgcwidths}

In this section, we derive   various  comparisons between the  Kolmogorov width and  the restricted,  greedy, and $\gamma$-constrained Kolmogorov widths.

\subsection{Greedy, restricted, and $\gamma$-constrained  Kolmogorov widths.}
     We first prove the following theorem, which shows that the restricted Kolmogorov widths and the greedy Kolmogorov widths are the same, provided $K\in\cK_p$.
\begin{theorem}
\label{T:gr}
 Let $X$ be any Banach space. If $K\in \cK_{p}=\cK_p(X)$,  then for any $n\in\N$, we have
\be\label{uno}
d_n^r(K)_X=d^g_n(K)_X=\lim_{\gamma\to\infty}d^{c,\gamma}_n(K)_X.
\ee
\end{theorem}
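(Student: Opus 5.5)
By Lemma \ref{L1}(iv) we already have the chain
$$
d_n^r(K)_X\le d_n^g(K)_X\le \lim_{\gamma\to\infty}d_n^{c,\gamma}(K)_X .
$$
So the whole theorem reduces to the single reverse inequality
$$
\lim_{\gamma\to\infty}d^{c,\gamma}_n(K)_X\le d^r_n(K)_X .
$$
The limit exists because $d^{c,\gamma}_n$ is nonincreasing in $\gamma$ by (ii). Since $K\in\cK_p$, we have $K_\gamma=\gamma K$, and this is the only place where convexity and central symmetry enter.

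Fix $\e>0$ and take $V\subset\cX(K)$ with $\dim V\le n$ and $\dist(K,V)\le d_n^r(K)_X+\e$. The first step is to replace $V$ by a space $W\subset S(K)$, the finite linear span of $K$, with $\dim W\le \dim V$ and $\dist(K,W)\le d_n^r(K)_X+2\e$. To do this, pick a basis $v_1,\dots,v_m$ of $V$, normalized in $X$. Choose $w_i\in S(K)$ with $\|v_i-w_i\|_X\le \eta$, which is possible since $\cX(K)$ is the closure of $S(K)$. Set $W=\span\{w_i\}$.

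For $f\in K$ the best approximation $v=\sum c_iv_i$ from $V$ satisfies $\|v\|_X\le 2\sup_K\|f\|_X$. Because all norms on $V$ are equivalent, $\sum|c_i|\le C_V$, where $C_V$ does not depend on $f$. Hence $\|v-\sum c_iw_i\|_X\le C_V\eta$. Choosing $\eta\le \e/C_V$ gives $\dist(K,W)\le d_n^r(K)_X+2\e$.

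Since $W\subset S(K)$ is finite dimensional, it lies in $\span\{g_1,\dots,g_N\}$ for finitely many $g_j\in K$. Using convexity and symmetry of $K$, the set $W\cap K$ is a convex, symmetric neighborhood of $0$ relative to $W$: indeed $\sum_j t_jg_j\in K$ whenever $\sum_j|t_j|\le1$. More precisely, the absolutely convex hull $A$ of $g_1,\dots,g_N$ is contained in $K$, and its span $U$ contains $W$. By finite dimensionality, $A$ contains some ball $\{u\in U:\|u\|_X\le r\}$ with $r>0$.

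For $f\in K$, the best approximation $w\in W$ has $\|w\|_X\le 2\sup_K\|f\|_X=:2R$. Therefore $w\in (2R/r)A\subset \gamma K$ for every $\gamma\ge 2R/r$. This gives
$$
d_n^{c,\gamma}(K)_X\le \sup_{f\in K}\inf_{w\in W\cap\gamma K}\|f-w\|_X\le d^r_n(K)_X+2\e
$$
for all $\gamma$ large. We then let $\gamma\to\infty$ and afterwards $\e\to0$, using the $\e$-argument of Remark \ref{R:firstremark} if near-minimizers are not attained.

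The main obstacle is the density step, since $V$ need not lie in $S(K)$ itself. The estimate $\sum|c_i|\le C_V$ must hold uniformly over $f\in K$. This is handled by the norm-equivalence constant on the fixed space $V$ together with boundedness of $K$. We also need existence of best approximations from the finite-dimensional spaces $V$ and $W$, which holds by compactness.
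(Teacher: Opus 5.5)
Your proof is correct and follows essentially the same route as the paper. You reduce via Lemma~\ref{L1}(iv) to $\lim_{\gamma\to\infty}d_n^{c,\gamma}(K)_X\le d_n^r(K)_X$, push a near-optimal space from $\cX(K)$ into $S(K)$ using a uniform $\ell_1$ bound on the coefficients of best approximations, and then use convexity and symmetry of $K$ to place the approximants in a fixed multiple of $K$. The only difference is cosmetic: the paper keeps the perturbed coefficients and writes each $\widetilde w_i=\widetilde\gamma f_i$ with $f_i\in K$, whereas you re-take best approximations from $W$ and use that the absolutely convex hull of the $g_j$ contains a relative ball of their span. (That inclusion $A\subset K$ is, incidentally, where convexity and symmetry are really used, not $K_\gamma=\gamma K$, which your argument does not need.)
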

\noindent
{\bf Proof:} We fix the value of $n$. 
Lemma \ref{L1}, part (iv), gives that  for every $K\in \cK(X)$, 
\begin{equation}
    d_n^r(K)_X\le d_n^g(K)_X\leq L:=\lim_{\gamma\to\infty} d_n^{c,\gamma}(K)_X,\quad n\in\N.
\end{equation} 
We will now show that 
 for $K\in \cK_p$, we have
\be 
\label{toshow1}
L\leq d_n^r(K)_X,\quad n\in\N,
\ee
which will complete the proof of the theorem.

 To prove \eref{toshow1}, it is enough to show that for any $0<\epsilon \le 1$,  there exists a $\gamma\geq 1$,  which may depend on $\epsilon$, such that
\begin{equation}
\label{toshow2} 
 L\leq    d_n^{c,\gamma}(K)_X \leq d_n^r(K)_X + \epsilon.
\end{equation}
 Note that the first part of the inequality, i.e. 
 $L\leq    d_n^{c,\gamma}(K)_X$, holds for every $\gamma\geq 1$ since $d_n^{c,\gamma}(K)_X$ is a decreasing function of $\gamma$. 

Let $0<\epsilon \le 1$ be arbitrary and fixed,  and recall that $S(K)$ is the linear span of $K$. From the definition of the restricted width,  there exists a subspace $W_n=W_n(\epsilon):= \span\{w_1,...,w_n\}$ with $w_i\in \cX:=\cX(K) = \overline{S(K)}$,
such that
\begin{equation}
\sup_{f\in K}\,\inf_{v\in W_n}\|f-v\|_X\leq d_n^r(K)_X + \epsilon/2.
\end{equation}
Without loss of generality, we can assume that the $w_i$'s, $i=1,\dots,n$, are a basis for $W_n$.  Thus, each $g\in W_n$ has a unique representation
\be 
\label{uniquerep}
g=\sum_{i=1}^n a_i(g)w_i.
\ee
From the equivalence of norms on a finite dimensional space, we have
\be 
\label{eqnorms}
\sum_{i=1}^n|a_i(g)| \le C_1\|g\|_X,\quad g\in W_n,
\ee
where the constant $C_1$ depends on $W_n$.

For each $f\in K$, let $ S(f)\in W_n$ be a best approximation to $f$ from $W_n$, that is
\begin{equation}
    \left\|f - \sum_{i=1}^n a_i(S(f))w_i\right\|_X \leq d_n^r(K)_X + \epsilon/2.
\end{equation}
It follows that
\be 
\label{follows1}
\|\sum_{i=1}^na_i(S(f))w_i\|_X\le \|f\|_X+d_n^r(K)_X + \epsilon/2\le C_2 ,\quad f\in K,
\ee 
where $C_2$ is a fixed constant,   depending only on $K$. 
Going further, we have
\be 
\label{further}
\sum_{i=1}^n|a_i(S(f))|\le 
C_1\|\sum_{i=1}^na_i(S(f))w_i\|_X
\le C,\quad f\in K,\quad C:=C_1C_2.
\ee

Let $\delta > 0$. Since each $w_i\in \overline{S(K)}$ we can choose a $\widetilde{w}_i\in S(K)$ such that $\|w_i - \widetilde{w}_i\|_X \leq \delta$. Considering the perturbed space $V_n := \span\{\widetilde{w}_1,...,\widetilde{w}_n\}$, we then get for any $f\in K$,
\begin{eqnarray}
    \left\|f - \sum_{i=1}^n a_i(S(f))\widetilde{w}_i\right\|_X  
    &\leq&
   \left\|f - \sum_{i=1}^n a_i(S(f)){w}_i\right\|_X  +\sum_{i=1}^n |a_i(S(f))|\|w_i-\widetilde{w}_i\|_X \nonumber \\ 
   &\leq&
   \left\|f - \sum_{i=1}^n a_i(S(f)){w}_i\right\|_X  +\delta\sum_{i=1}^n |a_i(S(f))| \nonumber \\
   &\leq& d_n^r(K)_X + \epsilon/2 +C\delta.
\end{eqnarray}
 If $0<\delta<\frac{\epsilon}{2C}$, we obtain for all $f\in K$
\begin{equation}
\label{eq12}
    \left\|f - \sum_{i=1}^n a_i(S(f))\widetilde{w}_i\right\|_X
    \leq d_n^r(K)_X + \epsilon.
\end{equation}

We will now show that there is a value of $\gamma$ such that for each $f\in K$, the  approximant in \eref{eq12} is in $K_\gamma$. For this, we recall that each $\widetilde{w}_i\in S(K)$, and since $K\in\cK_p$,  there exists  $\widetilde\gamma>0$ such that $\widetilde{w}_i\in K_{\widetilde\gamma}$, i.e. 
$\widetilde{w}_i=\widetilde \gamma f_i$, for some $f_i\in K$, $i=1, \ldots,n$.  Using \eref{further} and the fact that $K\in\cK_p$, we find that
 for all $f\in K$
\begin{equation}
    \sum_{i=1}^n a_i(S(f))\widetilde{w}_i =
    \widetilde \gamma\sum_{i=1}^n a_i(S(f)) f_i\in K_\gamma,~\text{for}~\gamma= C\widetilde\gamma,
\end{equation}
 where $C$ is the constant in \eref{further}.
It then follows from \eref{eq12} that 
$$
\sup_{f\in K}\,\inf_{v\in V_n\cap K_\gamma}\|f-v\|_X\leq d_n^r(K)_X + \epsilon.
$$
Therefore, we have proven that
\begin{equation}
    d_n^{c,\gamma}(K)_X \leq d_n^r(K)_X + \epsilon,
\end{equation}
which shows \eref{toshow2} and completes the proof.
\hfill $\Box$

\subsection{Comparisons between restricted, greedy,  and standard Kolmogorov widths}
\label{SS:restwidth}

It is clear that the standard Kolmogorov width is no larger  than the other widths we have introduced.  We are interested in how much larger these constraining widths are.   We begin with the case of 
restricted Kolmogorov width since
this is the least demanding of the constrained widths we have introduce.
The following theorem gives an upper bound on  the restricted Kolmogorov width of a compact set $K$ by its  Kolmogorov width.
\begin{theorem}
\label{T:firstcomparison}
For every Banach space $X$ and every $K\in\cK( X)$ we have that
\be
\label{opt1}
d^r_{n}(K)_X
\leq (1+\sqrt{n})d_n(K)_X, \quad {n\geq 0.}
\ee
When $X$ is a Hilbert space, we have
\be 
\label{philbertbound2}
d^r_{n}(K)_X=  d_n(K)_X, \quad n\geq 0.
\ee
\end{theorem}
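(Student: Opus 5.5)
This is a direct application of Theorem \ref{TABS}, with $Z:=X$ and $Y:=\cX(K)$. The plan is to check that the hypotheses of that theorem hold and then to identify each width with the corresponding quantity in \eref{T:abs}.

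First, $\cX(K)=\overline{S(K)}$ is a closed linear subspace of $X$. Its norm is, by definition, the restriction of $\|\cdot\|_X$, so $\cX(K)$ is a Banach space and a subspace of $Z=X$ in the sense used in Theorem \ref{TABS}. Moreover, $K\subset S(K)\subset\cX(K)$, and $K$ is compact in $\|\cdot\|_X$. Since the two norms coincide on $\cX(K)$, $K$ is also compact in $\cX(K)$, i.e.\ $K\in\cK(\cX(K))$.

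Next, I identify the widths. By \eref{restrictedwidth}, $d^r_n(K)_X=d_n(K)_{\cX}$, and this is exactly $d_n(K)_Y$ in Theorem \ref{TABS}. For $n=0$ both sides equal $\sup_{f\in K}\|f\|_X$, so the case $n=0$ is trivial. On the other side, $d_n(K)_Z=d_n(K)_X$. The second inequality in \eref{T:abs} then gives $d^r_n(K)_X\le(1+\sqrt n)\,d_n(K)_X$, which is \eref{opt1}.

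For the Hilbert space case, apply \eref{philbertbound}: if $X$ is Hilbert, it gives $d_n(K)_{\cX}=d_n(K)_X$, which is \eref{philbertbound2}. Alternatively, combine the first inequality of \eref{T:abs} (equivalently, Lemma \ref{L1}(iv)) with the fact that the orthogonal projection onto the closed subspace $\cX$ has norm one.

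I do not expect any real obstacle here. The only point that needs care is that $Y$ in Theorem \ref{TABS} should be a closed subspace, so that the projection result of \cite{GG} applies. This is why $\cX(K)$ is defined as the closure of $S(K)$, and it is the one thing to verify explicitly.
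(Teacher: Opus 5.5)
Your proposal is correct and matches the paper's proof: the paper also applies Theorem \ref{TABS} with $Y:=\cX(K)$ and $Z:=X$. It reads off \eref{opt1} from the second inequality in \eref{T:abs} and \eref{philbertbound2} from \eref{philbertbound}. Your extra checks make explicit what the paper leaves implicit, and they are sound: that $K\in\cK(\cX(K))$, the trivial case $n=0$, and the need for $\cX(K)$ to be closed so that the bounded projection in the proof of Theorem \ref{TABS} exists.
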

\noindent
{\bf Proof:} 
 We apply Theorem \ref{TABS} with  the Banach spaces $Y:=\cX(K)\subset X=:Z$.  In the case $X$ is a Hilbert space, we have that   $d^r_{n}(K)_X=  d_n(K)_X$, see \eref{philbertbound}. In the  case of a general Banach space,
 from the definition of restricted widths,  this theorem gives
$$
d^r_{n}(K)_X=d_{n}(K)_{Y}\leq (1+\sqrt{n})d_n(K)_{X},
$$
as desired.  
\hfill $\Box$

In the case of compact sets $K\in\cK_p$,   the estimate  \eref{opt1} was given in Proposition 3.2 from \cite{P} with $d_n^r(K)_X$ substituted by $d^g_{n}(K)_X$. Combining Theorems \ref{T:firstcomparison} and \ref{T:gr} furnishes an alternative proof. Namely, the following holds.
\begin{theorem}
\label{T:known}{\rm [Proposition 3.2 from \cite{P}]}
For every 
 compact subset $K\in \cK_{p}$ of a Banach space $X$ we have
\begin{equation}
\label{T:abs1}    
d^g_{n}(K)_X\leq (1+\sqrt{n})d_n(K)_X, \quad n\ge 0.
\end{equation}
When $X$ is  a Hilbert space $H$, then
\begin{equation}
\label{T:abs3}    
d^g_{n}(K)_H=d_n(K)_H, \quad n\geq 0.
\end{equation}
\end{theorem}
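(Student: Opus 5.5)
The statement follows by chaining two earlier results. For $K\in\cK_p$, Theorem \ref{T:gr} gives the identity $d^g_n(K)_X=d^r_n(K)_X$ for every $n\in\N$. Theorem \ref{T:firstcomparison}, which holds for every compact $K$ and so in particular for $K\in\cK_p$, gives $d^r_n(K)_X\le(1+\sqrt n)d_n(K)_X$. Combining the two yields \eref{T:abs1} for $n\ge 1$.

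The case $n=0$ is treated separately, since Theorem \ref{T:gr} is stated only for $n\in\N$. Here all the widths are defined to be $\sup_{f\in K}\|f\|_X$. Hence $d^g_0(K)_X=d_0(K)_X=(1+\sqrt0)\,d_0(K)_X$, and equality holds trivially.

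For the Hilbert space statement \eref{T:abs3}, we use \eref{philbertbound2} of Theorem \ref{T:firstcomparison}, which gives $d^r_n(K)_H=d_n(K)_H$. Combining this with $d^g_n(K)_H=d^r_n(K)_H$ from Theorem \ref{T:gr} gives $d^g_n(K)_H=d_n(K)_H$ for $n\ge1$. The case $n=0$ is again the trivial identity noted above. The reverse inequality $d_n\le d^g_n$ is not needed separately, because \eref{T:abs3} is already stated as an equality; it is in any case consistent with Lemma \ref{L1}(iv).

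There is essentially no obstacle here. The substantive work, namely the projection bound of Theorem \ref{TABS} and the coefficient-boundedness and perturbation argument in Theorem \ref{T:gr}, has already been done. The only point to check is that the hypothesis $K\in\cK_p$ is exactly what Theorem \ref{T:gr} requires. Convexity and central symmetry of $K$ are what make $S(K)=\bigcup_{\gamma}K_\gamma$, and this is the step where the restricted width is identified with the greedy width.
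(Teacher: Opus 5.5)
Your proposal is correct and matches the paper's own proof, which likewise chains Theorem \ref{T:firstcomparison} with the identity $d_n^g(K)_X=d_n^r(K)_X$ for $K\in\cK_p$ from Theorem \ref{T:gr}, in both the Banach and Hilbert cases. Your separate treatment of $n=0$, where every width reduces to $\sup_{f\in K}\|f\|_X$, is a detail the paper leaves implicit.
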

\vskip .1in
\noindent
{\bf Proof:} This follows from Theorem \ref{T:firstcomparison} and the fact that $d_n^g(K)_X=d_n^r(K)_X$  when $K\in \cK_p$, see Theorem \ref{T:gr}.
\hfill $\Box$

Note that   estimate \eref{opt1} holds for any  $K\in\cK(X)$.  In the case when $K\in\cK_p$,  the greedy and restricted widths are identical,  and so we obtain the same bound for greedy widths.  However, for a general compact set $K\in \cK(X)$,  the situation  for greedy widths is different.
Namely, Theorem 3.1 from \cite{P} (see Theorem 4.1 from \cite{BCDDPW} for the result in a Hilbert space) gives 
the following result.

\begin{theorem}{\rm [Theorem  3.1 from \cite{P}]}
    \label{knownT1}
    For any  $K\in\cK(X)$  we have that
    $$
    d^g_n(K)_X\leq (n+1)d_n(K)_X, \quad n=0,1,\ldots.
    $$
     On the other hand, for every $n$ and every $\varepsilon>0$ there is a Hilbert space $ X$ and a compact set $K=K(\varepsilon,n) \in \cK(X)$, such that 
    $$
    d^g_n(K)_X\geq  (n-1-\varepsilon)d_n(K)_X.
    $$
\end{theorem}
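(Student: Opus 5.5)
We prove the upper bound first and then construct the extremal example.

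\emph{Upper bound.} Fix $n$ and $\e>0$, and let $V$ be an $n$-dimensional space with $\dist(K,V)\le d_n(K)_X+\e=:\delta$. The natural tool is an Auerbach-type selection: choose $f_1,\dots,f_n\in K$ that maximize the ``volume'' functional. Concretely, let $P:X\to V$ be any map assigning to each $f\in K$ a near-best approximation $Pf\in V$. Pick a basis of $V$, and among $n$-tuples $(f_1,\dots,f_n)\in K^n$ maximize $|\det[\,\text{coordinates of } Pf_1,\dots,Pf_n\,]|$. The supremum is attained up to a factor $1-\eta$, since $K$ is compact and $P$ can be taken bounded on $K$.

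We may assume $d_n^g>0$ and that the maximal determinant $D$ is positive. If $D=0$, then $PK$ spans a space $V'$ of dimension less than $n$, and we repeat the argument with $n$ replaced by $\dim V'$, which only helps.

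By Cramer's rule, every $f\in K$ satisfies
$$
Pf=\sum_i c_i Pf_i, \qquad |c_i|\le \frac{1}{1-\eta},
$$
because replacing $f_i$ by $f$ cannot increase the determinant beyond $D$. Now set $g:=\sum_i c_i f_i\in \span\{f_1,\dots,f_n\}$. Then
$$
\|f-g\|\le \|f-Pf\|+\sum_i|c_i|\,\|Pf_i-f_i\|\le \delta+\frac{n}{1-\eta}\,\delta .
$$
Letting $\e,\eta\to0$ gives $d_n^g(K)_X\le (n+1)d_n(K)_X$.

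The only subtle point is measurability or continuity of $P$. This is not needed: we only use $P$ on the finitely many points $f,f_1,\dots,f_n$, and the maximization over an arbitrary (possibly discontinuous) choice of $P$ is still bounded, since $\|Pf\|\le 2\sup_K\|f\|+\delta$. Hence a near-maximizing tuple exists.

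\emph{Lower bound.} Here we need a construction in Hilbert space where the greedy spaces are forced to be bad. The plan is to take $X=\ell_2(\R^{n+1})$, or a slightly larger space, and let $K$ consist of $0$ together with $n+1$ vectors $f_j=x_j+t\,e_j'$. The $x_j$ lie in a fixed $n$-dimensional space $V$ and form a simplex-like configuration, say the vertices of a regular simplex projected so that $\sum_j x_j=0$ with all the $x_j$ of comparable norm. The $e_j'$ are orthonormal and orthogonal to $V$, and $t$ is small. Then $d_n(K)\le t$, witnessed by $V$.

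Any greedy space is spanned by $n$ of the $f_j$, say all except $f_{n+1}$. Approximating $f_{n+1}$ then requires writing $x_{n+1}=-\sum_{j\le n}x_j$, so the error picks up $t\,\|\sum_{j\le n} e_j'\|$-type terms. These are Hilbertian, so of size $t\sqrt n$, which is not enough to reach $(n-1)t$.

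To get the factor of order $n$ instead of $\sqrt n$, I would instead choose the perturbations not orthogonal but aligned: take $f_j=x_j+t\,u$ for a single unit vector $u\perp V$, with $x_{n+1}=-\sum_{j\le n}x_j$. Then
$$
f_{n+1}+\sum_{j\le n}f_j=(n+1)t\,u,
$$
and the distance from $f_{n+1}$ to $\span\{f_1,\dots,f_n\}$ is about $(n+1)t$ divided by a normalization factor. This must be balanced against $d_n(K)$, which is no longer simply $t$, because a space containing $u$ can do better.

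Tuning this balance, and computing $d_n(K)$ exactly (or bounding it by roughly $t$ via a cleverly tilted space) so that the ratio approaches $n-1$, is the main obstacle. I would first try the configuration from \cite{BCDDPW}, Theorem 4.1, and verify the ratio $n-1-\e$ by direct Gram-matrix computation.
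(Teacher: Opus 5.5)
Your upper bound is correct. You take a $(1-\eta)$-maximizer of $|\det|$ of the coordinates of $Pf_1,\dots,Pf_n$ in a basis of a near-optimal $V$. Cramer's rule then gives $|c_i|\le(1-\eta)^{-1}$, and the triangle inequality yields the bound $(1+n/(1-\eta))\delta$. You handle the degenerate case $D=0$ correctly, and you are right that no regularity of $P$ is needed. This is the standard Auerbach-type argument; the paper gives no proof of its own and only cites \cite{P}.

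The gap is the second half of the statement: you never produce the extremal $K$. You end with an unresolved ``main obstacle'' and a plan to consult \cite{BCDDPW}, so as written the lower bound is unproved.

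The obstacle you name is not real, and your aligned construction $f_j=x_j+t\,u$ with $\sum_j x_j=0$ already works. For a lower bound on $d_n^g(K)$ relative to $d_n(K)$ you only need an \emph{upper} bound on $d_n(K)$. The space $V$ itself gives $d_n(K)\le t$, and if some space containing $u$ does better, the ratio only grows. So there is nothing to balance. Two steps are missing.

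(a) Every admissible greedy space for $K=\{0,f_1,\dots,f_{n+1}\}$ is spanned by at most $n$ of the $f_j$. Hence it lies in some $W_k:=\mathrm{span}\{f_j:\ j\neq k\}$. Since $f_k=-\sum_{j\neq k}f_j+(n+1)t\,u$, its error is at least $\mathrm{dist}(f_k,W_k)=(n+1)t\,\mathrm{dist}(u,W_k)$.

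(b) $\mathrm{dist}(u,W_k)\to 1$ as $t/r\to 0$. Take a regular simplex, with $\|x_j\|=r$ and $\langle x_i,x_j\rangle=-r^2/n$ for $i\neq j$. Then $w=x_k+\frac{r^2}{nt}\,u$ is orthogonal to the $n$-dimensional space $W_k\subset V\oplus\mathbb{R}u$. This gives $\mathrm{dist}(u,W_k)=\rho/\sqrt{1+\rho^2}$ with $\rho:=r/(nt)$, for every $k$.

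Hence $d_n^g(K)\ge (n+1)\frac{\rho}{\sqrt{1+\rho^2}}\,d_n(K)$. Once $r/t$ is large, this exceeds $(n-1-\varepsilon)d_n(K)$, and even $(n+1-\varepsilon)d_n(K)$. With these few lines your construction proves the claim in $X=\ell_2(\mathbb{R}^{n+1})$. It also shows that the factor $n+1$ in the upper bound cannot be lowered in Hilbert space.
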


\vskip .1in

We next show that the bound in Theorem \ref{T:firstcomparison} is sharp.
 Before doing  this, we gather some  preliminary results.  
We denote by $C(\Omega)$ the space of continuous functions on a compact topological space $\Omega$, equipped with the usual norm
\begin{equation}
    \|f\|_{C(\Omega)} := \sup_{x\in \Omega}|f(x)|,
\end{equation}
and present  the following proposition.

\begin{prop}\label{kolmogorov-gelfand-proposition}
    Let $\Omega$ be any compact topological space and $K$ be any compact, centrally symmetric subset of $C(\Omega)$. Then we have
    \begin{equation}
        d_n(K)_{C(\Omega)} \leq \delta_n(K)_{C(\Omega)} = d^n(K)_{C(\Omega)},
    \end{equation}
where $d^n$ and $\delta_n$ denote the Gelfand and linear widths, respectively.
\end{prop}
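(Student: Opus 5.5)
The inequality $d_n(K)_{C(\Omega)}\le \delta_n(K)_{C(\Omega)}$ holds for any Banach space, since the range of a rank $n$ operator is an admissible $n$ dimensional space. Similarly, $d^n\le\delta_n$ in general: if $A_n=\sum_{i=1}^n \lambda_i(\cdot)g_i$, then for $f\in K$ with $\lambda_i(f)=0$ we have $A_nf=0$, so $\|f\|=\|f-A_nf\|$. The real content is therefore the reverse inequality $\delta_n(K)_{C(\Omega)}\le d^n(K)_{C(\Omega)}$. To prove it, I will exploit the fact that $C(\Omega)$ has the extension property for Lipschitz or linear maps into it, in the spirit of its being an injective ($\mathcal{P}_1$-like) target via pointwise evaluation.

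\textbf{Construction.} Fix functionals $\lambda_1,\dots,\lambda_n\in C(\Omega)^*$ with
$$
\sup\{\|f\|:f\in K,\ \lambda_i(f)=0\}\le d^n(K)+\e,
$$
and let $\Lambda f:=(\lambda_1(f),\dots,\lambda_n(f))\in\R^n$. On the subspace $\R^n$, equip $\Lambda(C(\Omega))$ with the norm $\|y\|_*:=\inf\{p_K(f):\Lambda f=y\}$, where $p_K$ is the Minkowski functional of $\widetilde K$. One can replace $K$ by its symmetric convex hull, since all three widths are unchanged: this is \eref{convexify} for $d_n$, and it is easy for $\delta_n$ and $d^n$ because both are sup-type over linear conditions. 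Set $r:=d^n+\e$.

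The key step is pointwise. For each $x\in\Omega$, the functional $\phi_x(y):=\sup\{f(x): f\in\widetilde K,\ \Lambda f=y\}$ needs to be controlled. If $f,g\in\widetilde K$ have $\Lambda f=\Lambda g=y$, then $(f-g)/2\in\widetilde K\cap\ker\Lambda$, so $|f(x)-g(x)|\le 2r$. The goal, however, is a \emph{linear} map $\ell_x$ on $\R^n$ with $|f(x)-\ell_x(\Lambda f)|\le r$ for all $f\in\widetilde K$. I plan to obtain it by Hahn--Banach: consider on $C(\Omega)$ the sublinear functional
$$
p(f):=r\,p_{\widetilde K}(f)\ \text{restricted suitably},
$$
or more concretely, find $\ell_x\circ\Lambda$ as a functional agreeing with $\delta_x$ up to error $r$ on $\widetilde K$. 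The condition is exactly that $\delta_x$ lies within distance $r$ of $\mathrm{span}\{\lambda_i\}$ in the dual norm of $p_{\widetilde K}$. By duality, that distance equals $\sup\{f(x): f\in\widetilde K,\ \lambda_i(f)=0\}\le r$. This is the standard distance-to-subspace duality in the (semi)normed space normed by $p_{\widetilde K}$, which holds with the sup attained thanks to finite dimensionality of $\mathrm{span}\{\lambda_i\}$. So there exist coefficients $c_i(x)$ with $|f(x)-\sum_i c_i(x)\lambda_i(f)|\le r$ for all $f\in K$.

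\textbf{Main obstacle.} Defining $A_nf(x):=\sum_i c_i(x)\lambda_i(f)$ requires the $c_i$ to be continuous in $x$ so that $A_n$ maps into $C(\Omega)$. The map $x\mapsto\{c: \text{error}\le r\}$ is a set-valued map with nonempty closed convex values, and I would try to show it is lower semicontinuous (after enlarging $r$ to $r+\e$ to gain openness) and then apply Michael's selection theorem. Compactness of $K$, which makes the family $K$ equicontinuous, should give this lower semicontinuity. If $\Omega$ is not paracompact-friendly, an alternative is a partition-of-unity argument: cover $\Omega$ by finitely many open sets on which $K$ oscillates by less than $\e$ (by equicontinuity), pick $c(x_j)$ at centers, and glue with a partition of unity. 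Convexity of the error condition in $c$ makes the glued coefficients still valid with error $r+2\e$. This gluing is the route I would actually take, and it gives $\delta_n\le d^n+3\e$; letting $\e\to0$ finishes the proof.
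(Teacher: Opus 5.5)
Your proof is correct, but it obtains continuity of the coefficients by a different route than the paper.

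\textbf{The paper's route.} The paper works in $\R^{n+1}$ with the compact convex sets $E(K,x)=\{(\lambda_1(f),\dots,\lambda_n(f),f(x)):f\in K\}$. It takes the nearest point $z(x)\in E(K,x)$ to $z=(d^n+\delta+\epsilon)e_{n+1}$ and reads off the coefficients $f_i(x)$ from the normal vector $z-z(x)$. Because the metric projection is canonical, continuity of the $f_i$ follows once one knows $x\mapsto z(x)$ is continuous. The paper asserts this only briefly; it rests on $x\mapsto E(K,x)$ being Hausdorff-continuous, i.e.\ on equicontinuity of $K$.

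\textbf{Your route.} You split the argument into two independent pieces. The first is a pointwise Hahn--Banach duality in $(\mathrm{span}\,K,p_K)$. It gives coefficients $c(x)$ with the sharp error $\sup\{f(x):f\in K\cap\ker\Lambda\}$, but no regularity in $x$. The second is a gluing step with a partition of unity subordinate to a finite cover on which $K$ oscillates by less than $\epsilon$. The gluing works because $f(x)-c\cdot\Lambda f$ is affine in $c$ and $\Lambda f$ does not depend on $x$.

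\textbf{Comparison.} Your version costs an extra $\epsilon$. In exchange it shows exactly where compactness of $K$ enters and avoids the continuity-of-projection claim. The paper's version yields an explicit formula for the operator and needs no partitions of unity.

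\textbf{Two corrections.}
\begin{enumerate}
\item Your claim that $d^n$ is unchanged when $K$ is replaced by $\widetilde K$ is false for non-convex $K$. Take $C(\{1,2\})=\ell_\infty(\R^2)$, $K=\{0,\pm e_1,\pm e_2\}$, $n=1$ and $\lambda(f)=f(1)+2f(2)$. Then $K\cap\ker\lambda=\{0\}$, so $d^1(K)=0$, while $\delta_1(K)=d^1(\widetilde K)=1/2$. So the proposition itself fails for this $K$. The paper defines Gelfand widths only for $K\in\cK_p$, and its proof also uses convexity of $E(K,x)$. Hence $K$ is already convex, and you should simply drop that reduction.
\item Finite partitions of unity require $\Omega$ to be Hausdorff. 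For an arbitrary compact $\Omega$, as in the statement, choose a finite $\epsilon$-net $g_1,\dots,g_m$ of $K$ and set $F:=(g_1,\dots,g_m)$. Build the cover and the partition of unity on the compact metric set $F(\Omega)\subset\R^m$, then pull them back by $F$. This still gives oscillation control: if $\|F(x)-F(x_j)\|_\infty<\epsilon$, then $|f(x)-f(x_j)|<3\epsilon$ for every $f\in K$.
\end{enumerate}
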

Proposition \ref{kolmogorov-gelfand-proposition} essentially follows from classical results concerning the so  called {\it absolute Kolmogorov widths} (see for instance Theorem 1 in \cite{I}). A similar result for the corresponding $s$-numbers of operators can be found in \cite{hinrichs2016carl} (point (ii) above equation 1.3). For the reader's convenience, we give {in the Appendix, see \S\ref{SS:A1}}, a self-contained proof.

 We now prove a result that shows that the bound in Theorem \ref{T:firstcomparison} is sharp.
\begin{theorem}
\label{T:sharp}
There exists a Banach space $X$ and  a  set $K\in\cK(X)$ for which
\be
\label{sharp}
d_n^r(K)_X\ge C\sqrt{n}d_n(K)_X,\quad n\ge 0,
\ee
with $C>0$ independent of $n$.
\end{theorem}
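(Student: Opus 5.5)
The plan is to combine a finite-dimensional block where Kolmogorov widths in $X$ and in $\cX(K)$ differ by a factor $\sqrt n$, using Proposition \ref{kolmogorov-gelfand-proposition} and known Gelfand widths of octahedra, with a gluing step that produces a single $K$ working for all $n$. The gluing step is the one I am least sure of.

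\emph{Single block.} Fix $m=2n$ and let $Y=\ell_2(\R^m)$. Embed $Y$ isometrically into $C(\Omega)$, with $\Omega$ the unit sphere $S^{m-1}$, via $x\mapsto (u\mapsto \langle x,u\rangle)$. Set $K_n:=U(\ell_1(\R^m))\subset Y\subset C(\Omega)$. This set is compact, convex and centrally symmetric, and $\cX(K_n)=Y$. The restricted width is therefore just the Kolmogorov width inside the Hilbert space $\ell_2(\R^m)$, and \eref{lb11} gives
$$
d^r_n(K_n)_{C(\Omega)}=d_n(U(\ell_1(\R^m)))_{\ell_2(\R^m)}=\sqrt{1-n/m}=1/\sqrt2 .
$$
For the unrestricted width I will invoke Proposition \ref{kolmogorov-gelfand-proposition}, which gives $d_n(K_n)_{C(\Omega)}\le d^n(K_n)_{C(\Omega)}$. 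Gelfand widths do not change when passing from $Y$ to the superspace $C(\Omega)$. Indeed, $Y$ sits isometrically in $C(\Omega)$, every functional on $Y$ extends to $C(\Omega)$ by Hahn--Banach, and every functional on $C(\Omega)$ restricts to $Y$. Hence $d^n(K_n)_{C(\Omega)}=d^n(U(\ell_1(\R^m)))_{\ell_2(\R^m)}$. By the Kashin--Garnaev--Gluskin theorem this is $\asymp\min\{1,\sqrt{\log(em/n)/n}\}$, which for $m=2n$ is $\le C/\sqrt n$. So for this single $n$ we get $d^r_n(K_n)\ge c\sqrt n\, d_n(K_n)$.

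\emph{Gluing.} The statement asks for one pair $(X,K)$ valid for all $n$, so the blocks must be combined. I would take $n_k=2^k$ and $m_k=2n_k$, and let $X$ be the $c_0$-direct sum of the spaces $C(S^{m_k-1})$; this is again a $C(\Omega)$ space, with $\Omega$ the one-point compactification of the disjoint union. Let $K=\bigcup_k a_kK_{n_k}$, with $a_k\downarrow0$ chosen so that $K$ is compact.

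The lower bound should follow easily. The coordinate projection onto the $k$-th block has norm one and maps $\cX(K)$ onto the $k$-th copy of $Y$. Therefore $d^r_n(K)_X\ge a_k/\sqrt2$ for $n\le n_k$.

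For the upper bound at $n\in[n_k,n_{k+1})$, I would split the $n$ available dimensions. Spend about $n/2$ dimensions on block $k$ and block $k+1$, where the Gelfand bound gives error $\lesssim a_k/\sqrt n$. Spend the rest on the earlier blocks $j<k$; their total dimension is $\sum_{j<k}m_j\le 2n_k$, which is comparable to $n$, so they can be handled either exactly or again via Gelfand bounds. The later blocks $j>k+1$ are simply ignored, costing error $a_j$. The $c_0$-norm makes all these errors combine through a maximum rather than a sum.

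The main obstacle is exactly this bookkeeping. The decay of $a_k$ must be fast enough that the ignored blocks satisfy $a_{k+2}\lesssim a_k/\sqrt{n_k}$. At the same time, the earlier blocks must fit into roughly $n/2$ dimensions. With $n_k=2^k$ the dimensions of the earlier blocks sum to about $2n_k$, which is too many, so I would first try a sparser sequence such as $n_{k+1}\ge n_k^2$ with $a_k=n_k^{-1}$. The earlier blocks then have total dimension $o(n)$ and can be captured exactly. The cost is that intermediate values of $n$ must be treated through the monotonicity in $n$ of both widths. If the constant then fails to be uniform over $n$, the fallback is to prove the statement along the subsequence $n=n_k$ and absorb the remaining $n$ into the constant.
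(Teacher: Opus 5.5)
Your single-block argument is correct, and it is the same mechanism the paper uses. The restricted width equals the Kolmogorov width inside the embedded space, Hahn--Banach leaves Gelfand widths unchanged, and Proposition~\ref{kolmogorov-gelfand-proposition} bounds the Kolmogorov width in $C(\Omega)$ by the Gelfand width. But this only gives a pair $(X_n,K_n)$ for each $n$, while the theorem needs one pair and one constant for all $n$. The gluing step, which is the whole difficulty on your route, is not carried out, and as planned it fails.

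\textbf{Where the gluing fails.} For $n\in(n_k,n_{k+1})$, block $k$ only gives $d^r_n(K)_X\ge a_k\sqrt{(1-n/2n_k)_+}$. This has fallen to $a_k/\sqrt{2n_k}$ at $n=2n_k-1$ and vanishes afterwards. There the robust lower bound is $a_{k+1}/\sqrt2$, coming from block $k+1$. Your upper bound, however, still carries a block-$k$ term of order $a_k/\sqrt n$ until blocks $1,\dots,k$ can all be captured exactly. The certified ratio is then about $\sqrt n\,a_{k+1}/a_k$, so you need $a_{k+1}\gtrsim a_k$. That is incompatible with your requirement $a_{k+2}\lesssim a_k/\sqrt{n_k}$ for discarding later blocks.

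The sparse choice $n_{k+1}\ge n_k^2$, $a_k=1/n_k$ makes this worse:
\begin{itemize}
\item For $n$ just below $2n_k$, your blockwise lower bound and the Gelfand upper bound for block $k$ are both of order $a_k/\sqrt{n_k}$.
\item Just above $2n_k$, the lower bound drops to $a_{k+1}=a_k^2$.
\item Once blocks $\le k$ are captured, you only get the ratio $\sqrt{n/\log(em_{k+1}/n)}$. Near $n\approx 3n_k\approx 3\sqrt{n_{k+1}}$ this is $\sqrt{n/\log n}$. The reason is that the Gelfand widths of $U(\ell_1(\R^m))$ carry the factor $\sqrt{\log(em/n)}$ whenever the block supplying the lower bound is much larger than $n$.
\end{itemize}
The fallback is also invalid. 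Only finitely many $n$ can be absorbed into a constant. Across a gap $[n_k,n_k^2]$, monotonicity only yields $d^r_n/d_n\ge d^r_{n_{k+1}}/d_{n_k}$, which gives nothing.

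\textbf{What works, and what the paper does.} Keep dyadic blocks, make the weights geometric, and replace ``discard everything past block $k+1$'' by a Kashin-type allocation. Take $m_i=2^i$, $a_i=2^{-i/2}$ and a target $N\asymp 2^j$.
\begin{itemize}
\item Capture blocks $i\le j$ exactly, using fewer than $2^{j+1}$ dimensions.
\item Give block $i\in(j,2j]$ about $n_i\approx 2^{2j-i}(i-j+1)$ dimensions. Its Gelfand error is then $\lesssim a_i\sqrt{\log(e2^i/n_i)/n_i}\lesssim 2^{-j}$, at total cost $O(2^j)$.
\item Discard blocks $i>2j$, whose error is $a_i\le 2^{-j}$.
\end{itemize}
This gives $d_N(K)_X\lesssim 2^{-j}$ for $N=C_02^j$. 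The block $i=j+O(1)$ with $2^i\ge 2N$ gives $d^r_N(K)_X\gtrsim 2^{-j/2}$. Your worry that the earlier blocks need about $2n_k$ dimensions disappears, because the lower bound is taken a bounded number of blocks later, which costs only a constant when the weights are geometric. The dyadic spacing then lets monotonicity cover every $n$.

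The paper avoids gluing altogether. It starts from one separable pair, $K=U(W^1(L_1))\subset X=L_2$, for which $d_n(K)_X\ge c\sqrt n\,d^n(K)_X$ is known for all $n$ simultaneously. It embeds $X$ isometrically into $C([0,1])$ by Banach--Mazur and runs your block argument once.

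A minor point: the $c_0$-sum of the $C(S^{m_k-1})$ is $C_0$ of the disjoint union, which is a hyperplane of $C(\Omega^*)$ rather than $C(\Omega^*)$ itself. This is harmless, since you only apply the Proposition blockwise.
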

\vskip .1in
\noindent
{\bf Proof:}  
We start with a set  $K\in \cK(X)$ and a separable Banach space $X$,  such that 
\begin{equation}
 \label{to3}
    d_n(K)_X\ge C\sqrt{n}d^n(K)_X.
\end{equation}
Examples of such 
 pairs $K$ and $X$ are well-known. For example, in going further,  we can take  
 $$
 K = U(W^1(L_1([0,1])), \quad   X = L_2([0,1]),
 $$
 (see \cite{Kashin} or Chapter 14 of \cite{LGM}).


We next use the fact (Banach-Mazur theorem) that  every separable Banach space $X$ 
embeds isometrically into $C([0,1])$ 
(see Theorem 1.4.3 in \cite{AK}).   Let 
$$
j:X\rightarrow C([0,1])
$$
denote such an embedding and consider the image $j(K)\subset C([0,1])$. It is clear by definition that
\begin{equation}
 \label{to1}   d_n^r(j(K))_{C([0,1])} \geq d_n(j(K))_{j(X)} = d_n(K)_{X},
\end{equation}
since $\mathcal{X}(j(K)) \subset j(X)$. Using the Hahn-Banach theorem, it follows  that
\begin{equation}
\label{PL}
    d^n(j(K))_{C([0,1])} =d^n(j(K))_{j(X)}= d^n(K)_X,
\end{equation}
since any linear functionals on $j(X)$ can be extended to $C([0,1])$.
It follows from \eref{PL} and Proposition \ref{kolmogorov-gelfand-proposition}, which ,  in this particular case states that $d_n(j(K))_{C([0,1])} \leq d^n(j(K))_{C([0,1])}$,  that 
\begin{equation}
  \label{to2}   d_n(j(K))_{C([0,1])} \leq d^n(K)_X.
\end{equation}
Relations \eref{to1}, \eref{to2}, and \eref{to3} give that
\begin{equation}
    d_n^r(j(K))_{C([0,1])} \geq d_n(K)_X \geq  C\sqrt{n}d^n(K)_X \geq C\sqrt{n}d_n(j(K))_{C([0,1])},
\end{equation}
that is, \eref{sharp} holds for the set $j(K)$ and the space $C([0,1])$.
\hfill $\Box$

\subsection{Comparison between the constrained Kolmogorov widths $d_n^{c}(K)_X$ and the Kolmogorov widths}
\label{SS:directcomparisons} 
Since the constrained widths $d_n^{c}(K)_X\geq d_n^{r}(K)_X$, 
the results of the previous  section, see Theorem \ref{T:sharp},  show that  in some cases the constrained widths of a compact set $K$ can be considerably larger than the  Kolmogorov widths of $K$.  There is a natural question of whether there is any direct comparison between these two widths that holds for all Banach spaces $X$  and sets $K\in\cK( X)$.  For example, do there exist  sequences $(m_n)_{n\ge 0}$, and $(\lambda_n)_{n\ge 0}$, such that
\be 
\label{direct}
d_n^c(K)_X\le \lambda_n d_{m_n}(K)_X,\quad n\ge 1,
\ee
holds for all $X$ and   $K\in \cK(X)$. 
 The next theorem shows that there is no such direct comparison  of the form \eref{direct} that holds for Hilbert spaces $X$ and  convex, centrally symmetric compact subsets $K\subset X$.

\begin{theorem}
\label{T:nocomparison}
For every $\varepsilon>0$ and every $m \geq n\geq 2$, there is a convex centrally symmetric compact set $K=K(\e,n,m)\subset \ell_2$, such that 
$$
d_n(K)_{\ell_2}<\varepsilon, \quad \hbox{and}\quad d^c_m(K)_{\ell_2}>1.
$$

\end{theorem}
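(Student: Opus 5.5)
The plan is to build $K$ as a finite-dimensional polytope, so compactness, convexity and central symmetry are automatic. The polytope will be nearly two-dimensional, so that $d_n(K)_{\ell_2}$ is small for every $n\ge 2$. Its extreme points will be arranged so that any element of $K$ close to an extreme point must put almost all of its weight on that one extreme point; this forces any good constrained approximating space to nearly contain many orthogonal directions, which an $m$-dimensional space cannot do.

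\emph{Construction.} Fix a large integer $N\gg m$, a large radius $R$, and $N$ distinct unit vectors $w_1,\dots,w_N$ on the unit circle of $\span\{e_1,e_2\}$ (say equally spaced). Put
$$x_j:=R\,w_j+e_{j+2},\quad j=1,\dots,N,\qquad K_0:=\mathrm{conv}\{\pm x_1,\dots,\pm x_N\},$$
so that every element of $K_0$ is $v=\sum_j c_jx_j$ with $\sum_j|c_j|\le1$. We take $V_2:=\span\{e_1,e_2\}$ and use it to bound $d_2(K_0)$. For such $v$ the distance to $V_2$ is $\|\sum_j c_je_{j+2}\|_{\ell_2}\le 1$, so $d_n(K_0)_{\ell_2}\le d_2(K_0)_{\ell_2}\le 1$. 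Once we show $d^c_m(K_0)_{\ell_2}\ge A$ for some $A$ that can be made as large as we like (by choosing $R$ and then $N$), we rescale. Set $K:=\lambda K_0$ with $\lambda$ chosen so that $\lambda A>1$ and $\lambda<\varepsilon$, which requires $A>1/\varepsilon$. By Lemma \ref{L1}(i) (with $\gamma=1$, $K_1=K$ since $K\in\cK_p$), both widths scale by $\lambda$, giving $d_n(K)_{\ell_2}<\varepsilon$ and $d^c_m(K)_{\ell_2}>1$.

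\emph{Lower bound.} Let $V$ have dimension $m$, and suppose every $x_j$ is within $E$ of some $v_j=\sum_i c^{(j)}_ix_i\in V\cap K_0$ with $\sum_i|c^{(j)}_i|\le1$. Two facts follow. First, looking at the $\span\{e_1,e_2\}$ component gives $\|w_j-\sum_i c^{(j)}_iw_i\|\le E/R$. Since $w_j$ is an extreme point of the absolutely convex hull of the $w_i$ and the $w_i$ are strictly separated on the circle, a quantitative strict-convexity estimate should give $\|c^{(j)}-e_j\|_{\ell_1}\le \eta(E/R)$, where $\eta(t)\to0$ as $t\to0$ for fixed $N$. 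Second, the vectors $\sum_i c^{(j)}_ie_{i+2}$ all lie in $P(V)$, the image of $V$ under the orthogonal projection onto $\overline{\span}\{e_3,e_4,\dots\}$, which has dimension at most $m$. Hence $\dist(e_{j+2},P(V))\le\eta(E/R)$ for all $j=1,\dots,N$. On the other hand, if $Q$ is the orthogonal projector onto $P(V)$, then $\sum_{j}\|Qe_{j+2}\|^2\le m$, so some $j$ satisfies $\dist(e_{j+2},P(V))^2\ge 1-m/N$. This is a contradiction unless $\eta(E/R)\ge\sqrt{1-m/N}$, so $E\ge R\,\eta^{-1}(\sqrt{1-m/N})$, and we take $A$ to be this lower bound. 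Taking an infimum over $V$ gives $d^c_m(K_0)_{\ell_2}\ge A$.

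\emph{Main obstacle and order of choices.} The delicate step is the quantitative extreme-point estimate $\eta$ and how it depends on $N$. The separation of the $w_i$ shrinks like $1/N$, so $\eta^{-1}$ depends on $N$. This is harmless because the parameters are chosen in order. First fix $N=2m$, so that $\sqrt{1-m/N}=1/\sqrt2$; this fixes the configuration of the $w_i$ and hence $\eta$. Then choose $R$ large enough that $A\ge R\,\eta^{-1}(1/\sqrt2)>1/\varepsilon$, and finally rescale as above. To prove the estimate, I would use a linear functional exposing $w_j$, namely $\ell=\langle\cdot,w_j\rangle$. We have $\ell(w_j)=1$ and $|\ell(w_i)|\le\cos(2\pi/N)<1$ for $i\ne j$. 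Therefore $\ell\big(\sum_i c_iw_i\big)\ge 1-E/R$ forces $c_j\ge 1-O\big((E/R)/(1-\cos(2\pi/N))\big)$, and consequently $\sum_{i\ne j}|c_i|\le 1-c_j$ is small as well, which gives $\eta$ explicitly.
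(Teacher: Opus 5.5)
Your overall strategy works and differs from the paper's, but the configuration you actually fix breaks it. Taking $N=2m$ unit vectors equally spaced on the full circle makes $N$ even, so $w_{j+m}=-w_j$.

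\emph{Why the literal construction fails.} Because $w_{j+m}=-w_j$, we have $|\langle w_{j+m},w_j\rangle|=1$, so your bound $|\ell(w_i)|\le\cos(2\pi/N)$ for $i\neq j$ is false. The stability estimate fails too: $c=-e_{j+m}$ gives $\sum_i c_iw_i=w_j$ exactly, while $\|c-e_j\|_{\ell_1}=2$. The conclusion itself fails, not just the proof. Put $u_j:=\frac12(x_j-x_{j+m})\in K_0$ and $V:=\mathrm{span}\{u_1,\dots,u_m\}$. Each $f=\sum_i c_ix_i\in K_0$ is approximated by $g:=\sum_{j=1}^m(c_j-c_{j+m})u_j\in V\cap K_0$, with $f-g=\sum_{j=1}^m\frac{c_j+c_{j+m}}{2}(e_{j+2}+e_{j+m+2})$. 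Hence $d^c_m(K_0)_{\ell_2}\le 1/\sqrt2$ for every $R$, and no choice of $R$ can make $A$ large.

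\emph{The repair.} Your argument needs the $2N$ points $\pm w_i$ to be distinct. For example, place $w_i$ at angle $\pi(i-1)/N$ on a half circle. Then $|\langle w_i,w_j\rangle|\le\rho:=\cos(\pi/N)$ for $i\ne j$. Note the constant is $\pi/N$, not $2\pi/N$; this is also the correct constant for odd $N$ on the full circle. Your exposing functional gives $1-t\le c_j+\rho(1-|c_j|)$ with $t=E/R$. This forces either $t>1-\rho$ or $c_j\ge 1-t/(1-\rho)$. So $\eta(t)=2t/(1-\rho)$ works for all $t$, and $A=R(1-\rho)/(2\sqrt2)$. With this change, the trace bound $\sum_j\|Qe_{j+2}\|^2\le \mathrm{rank}\,Q\le m$, your order of choices, and the rescaling all go through.

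\emph{Comparison with the paper.} The paper uses only $m+1$ vectors $v_i\in\R^n$ with $\dist(v_i,\mathrm{conv}(\pm v_k,k\ne i))>2$. That hypothesis rules out exactly the antipodal degeneracy above. It attaches small tails $\varepsilon e_{n+i}$ and then makes an exact dimension count:
\begin{itemize}
\item the projection of $V$ onto the $(m+1)$-dimensional tail space has dimension at most $m$;
\item so some nonzero $a$ satisfies $\sum_i a_ic_i=0$ for every $\sum_i c_iw_i\in V\cap K$;
\item this gives $|c_j|\le 1/2$ for the $j$ maximizing $|a_j|$;
\item hence $\dist(w_j,V\cap K)\ge\frac12\dist(v_j,\mathrm{conv}(\pm v_i,i\ne j))>1$.
\end{itemize}
That route needs no stability estimate and no rescaling, and the bad index depends on $V$ alone. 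Your route replaces the exact linear relation with a Hilbert-space averaging (trace) argument plus a quantitative exposed-point estimate. It costs $N=2m$ extreme points and the explicit constant $1-\cos(\pi/N)$. In exchange it tolerates approximate containment and gives a fully explicit planar configuration, whereas the paper only sketches how to choose the $v_i$.
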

\vskip .1in
\noindent
{\bf Proof:} 
We fix $m\geq n\geq 2$ and $\varepsilon>0$, and  choose $m+1$   vectors  
$v_1,\ldots, v_{m+1}\in \R^n$ such that $\span \{v_1, \ldots,v_{m+1}\}=\R^n$ and  
\be
\label{distconv}
\dist(v_i,\text{conv}(\pm v_j, j\neq i))
> 2,\quad i=1, \ldots,m.
\ee
One can construct such a set of vectors by taking quasi uniformly
spaced points on a sphere in $\R^n$ with sufficiently large radius.  Let $e_j$, $j\ge 1$, be the canonical basis for $\ell_2$ where $e_j$ has $j$-th coordinate one and all other coordinates zero.
For each $i=1,\dots,m+1$, we define  the vectors $w_i\in\ell_2$ with the property 
\be
\label{defw}
w_i\cdot e_j = \begin{cases}
    v_i\cdot e_j, & j \leq n,\\
    \epsilon, & j = n + i,\\
    0, & \text{otherwise}.
\end{cases} 
\ee
In other words, $w_i$ has its first $n$ coordinates given by $v_i$, its entry in coordinate $n+i$ is $\e$,   and  all of the remaining coordinates are zero. 
We then  consider the  centrally symmetric and convex set
\be 
    K:=K(\e,n,m) := \text{conv}\left\{\pm w_1, 
    \pm w_2,\ldots,\pm  w_{m+1}
    \right\} \subset \ell_2.
\ee 
 We write any element $z\in\ell_2$ as $z=(x;y)$, where
$x\in\R^n$ is the  vector consisting of its  first $n$ coordinates and $y$ is the vector consisting of its remaining coordinates.  In particular, any element $z\in K$ is of this form with
\be\label{first}
x=\sum_{i=1}^{m+1} c_iv_i, \quad y=(y_i)_{i\ge 1},\quad \hbox{where}\quad y_i=\varepsilon c_i,\quad 1\leq i\leq m+1,
\quad y_i=0\quad \hbox{for}\quad i>m+1,
\ee
and the coefficients $c_i$ satisfy the inequality
\be\label{nn}
\sum_{i=1}^{m+1}|c_i|^2\le \sum_{i=1}^{m+1}|c_i|\leq 1.
\ee
We define the $n$ dimensional subspace $V^*:=\span\{e_1,\dots ,e_n\}$. Then, from \eref{nn}, we have
\be
\dist(z,V^*)_{\ell_2}\le\e,\quad z\in K,
\ee
and therefore
\be 
\label{dn}
d_n(K)_{\ell_2} \le \e.
\ee

We now want to show that the constrained width $d_m^c(K)_{\ell_2}$  is large.   
  Let $V$ be any  $m$-dimensional subspace of $\ell_2$, where $m\geq n$.  Observe  that  the orthogonal projection   of $V$ onto the space 
$\span\{(0;e_1), \ldots,(0;e_{m+1})\}$, 
 where $0\in\R^n$, is an $m$ dimensional space.
 If  $z=(x;y)\in V$, then there are coefficients $a_i$, $i=1, \ldots,m+1$, depending only on $V$, not all zeroes, such that
$$
a_1y_1+\ldots+a_{m+1}y_{m+1}=0.
$$
Now, if $z$ is also in $K$, then $z=\sum_{i=1}^{m+1}c_iw_i$,   where the $c_i$'s satisfy \eref{nn}.   Combining this with \eref{first} gives that 
\be\label{second}
\sum_{i=1}^{m+1}a_ic_i=0.
\ee
 Let $j\in\{1,\dots,m+1\}$ be chosen so  that $|a_j|$ is the biggest among all these $|a_i|'s$. Then it follows from \eref{second} that
$$
|a_jc_j|=\left|-\sum_{i=1, i\neq j}^{m+1}a_ic_i\right|\leq \sum_{i=1, i\neq j}^{m+1} |a_i||c_i|\leq |a_j|
\sum_{i=1, i\neq j}^{m+1} |c_i|,
$$
and therefore
\be 
\label{boundcj}
|c_j|\leq \sum_{i=1, i\neq j}^{m+1} |c_i|\leq 1-|c_j|,
\ee
where we have used \eref{nn}.
Thus, 
\be
\label{musthave}
|c_j| \leq 1/2.
\ee
 In other words,  there is $j\in\{1,\dots,m+1\}$ depending only on $V$, such that if $z=\sum_{i=1}^{m+1}c_iw_i\in K\cap V$, then  \eref{musthave} must hold.

Consider now the approximation of the element $f:=w_j\in K$, by any element $z=(x;y)\in  K\cap V$. We write $z=\sum_{i=1}^{m+1}c_iw_i$ as in
\eref{first}.
We calculate using \eref{distconv}
\begin{eqnarray}
\|f-z\|_{\ell_2} &\ge& \|(1-c_j)v_j-\sum_{i=i\neq j}c_iv_i\|_{\ell_2(\R^n)} 
    =(1-c_j)\left\|v_j -\sum_{i=1, i\neq j}^{m+1}\frac{c_i}{1-c_j}v_i\right\|_{\ell_2(\R^n)}\nonumber\\
    &\geq &(1-c_j)\dist(v_j,\text{conv}(\pm v_i, i\neq j)) >1
    \end{eqnarray}
since $1-c_j \geq 1/2$ and $\sum_{i\neq j} |c_i| \leq 1-|c_j| \leq 1-c_j$. This proves the theorem.\hfill $\Box$

\vskip .1in

The example given in Theorem \ref{T:nocomparison} is rather exotic. One may ask what happens for well known compact sets $K$. 
The following lemma shows a  result of this type when $K$ is the unit ball in $\ell_1(\R^m)$.

\begin{lemma}
\label{L2}
Let   $m\ge 2$ and
$$
    U(\ell_1(\R^m)):= \{x\in \mathbb{R}^m,~\|x\|_{\ell_1(\R^m)}\leq 1\}\subset X=\ell_\infty(\R^m),
$$
be  the  unit ball in $\ell_1(\R^m)$. Then,  for $n<m$,  we have  that
\begin{equation}\label{3.2}
d_n(U(\ell_1(\R^m)))_{\ell_\infty(\R^m)} \leq C\sqrt{\frac{1 + \log(m/n)}{n}}, \quad \hbox{while}\quad d_n^c(U(\ell_1(\R^m)))_{\ell_\infty(\R^m)} \geq 1/2.
\end{equation}
\end{lemma}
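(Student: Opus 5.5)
For the upper bound I will not compute anything new; I will reduce it to the classical Gelfand-width estimate. Since $\ell_\infty(\R^m)=C(\Omega)$ with $\Omega=\{1,\dots,m\}$, and $U(\ell_1(\R^m))$ is compact and centrally symmetric, Proposition \ref{kolmogorov-gelfand-proposition} gives
$$
d_n(U(\ell_1(\R^m)))_{\ell_\infty(\R^m)}\le d^n(U(\ell_1(\R^m)))_{\ell_\infty(\R^m)}.
$$
Moreover, $\|x\|_{\ell_\infty}\le\|x\|_{\ell_2}$, so this is at most $d^n(U(\ell_1(\R^m)))_{\ell_2(\R^m)}$. The latter is the Kashin--Garnaev--Gluskin bound $C\min\{1,\sqrt{(1+\log(m/n))/n}\}$, which I will quote. (Alternatively, I could use the Kolmogorov widths of the $\ell_1$ ball in $\ell_\infty$ directly from the literature, but the route through Proposition \ref{kolmogorov-gelfand-proposition} uses only what the paper has already set up.)

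For the lower bound I will adapt the argument of Theorem \ref{T:nocomparison}. Fix any subspace $V\subset\R^m$ with $\dim V\le n<m$. Then $V\ne\R^m$, so there is a nonzero $a\in\R^m$ with $\sum_i a_ix_i=0$ for all $x\in V$. Choose $j$ with $|a_j|=\max_i|a_i|$. For any $x\in V\cap U(\ell_1(\R^m))$, the computation leading to \eref{boundcj} and \eref{musthave} gives
$$
|a_j||x_j|\le|a_j|\sum_{i\ne j}|x_i|\le|a_j|(1-|x_j|),
$$
and hence $|x_j|\le 1/2$. Now approximate $f=e_j\in U(\ell_1(\R^m))$. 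Every $x\in V\cap U(\ell_1(\R^m))$ satisfies
$$
\|e_j-x\|_{\ell_\infty}\ge |1-x_j|\ge 1/2.
$$
Taking the supremum over $f$ and then the infimum over $V$ yields $d_n^c\ge1/2$.

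The only nontrivial input is the Gelfand width estimate in the upper bound. I expect the main obstacle to be making sure that Proposition \ref{kolmogorov-gelfand-proposition} applies as stated, namely that $\Omega$ is a finite discrete compact space, and that the cited $\ell_2$ Gelfand bound holds in the range $n<m$. The lower bound is elementary.
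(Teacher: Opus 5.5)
Your proposal is correct. For the lower bound you use essentially the paper's mechanism. The paper assumes there are approximants $v_i\in V\cap U(\ell_1(\R^m))$ with $\|e_i-v_i\|_{\ell_\infty}<1/2$ for every $i$. It then observes that the matrix with rows $v_i$ is strictly diagonally dominant, hence nonsingular, which contradicts $\dim V<m$. Your argument, borrowed from Theorem \ref{T:nocomparison}, takes an annihilating vector $a$ and its largest coordinate. This is exactly the unrolled proof of that nonsingularity criterion, applied directly. It even gives slightly more: a single index $j$, depending only on $V$, with $|x_j|\le 1/2$ for every $x\in V\cap U(\ell_1(\R^m))$.

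For the upper bound the routes differ. The paper simply quotes the Kolmogorov width estimate from Kashin's work. You instead chain three steps:
\begin{enumerate}
\item Proposition \ref{kolmogorov-gelfand-proposition}, with $\Omega=\{1,\dots,m\}$ discrete so that $C(\Omega)=\ell_\infty(\R^m)$. Note that the appendix proof also uses convexity of $K$, which holds here.
\item The trivial monotonicity $d^n(K)_{\ell_\infty(\R^m)}\le d^n(K)_{\ell_2(\R^m)}$.
\item The Kashin--Garnaev--Gluskin Gelfand width bound for the $\ell_1$ ball in $\ell_2$.
\end{enumerate}
This reduction is valid and somewhat more transparent, since the $\ell_2$ Gelfand width is the form in which the deep estimate is usually stated. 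The first step loses nothing: the identity $\ell_1(\R^m)\to\ell_\infty(\R^m)$ is self-dual, so its Kolmogorov and Gelfand widths coincide anyway. Both versions still rest on the same classical probabilistic result, so neither is more elementary in substance.
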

{\bf Proof:} The estimate for the Kolmogorov width is a classical result, see \cite{K1,K2}. To prove the result for $d_n^c(K)_{\ell_\infty(\R^m)}$, let   $V\subset \R^m$ be any linear space of dimension $n<m$,  and suppose that $v_i\in V\cap U(\ell_1(\R^m)) $   approximates $e_i\in U(\ell_1(\R^m))$, $i=1, \ldots,m,$ to accuracy $<1/2$. That is, 
$$
\|e_i - v_i\|_{\ell_\infty(\R^m)}< 1/2, \quad  \hbox{and}\quad \|v_i\|_{\ell_1(\R^m)} \leq 1, \quad i=1, \ldots,m.
$$
Then the components $v_{ij}$ of the vectors $v_i$ satisfy $v_{ii} > 1/2$ and 
$$
\sum_{j=1,\,j\neq i}^m |v_{ij}| < 1/2, \quad i=1, \ldots,m.
$$
But this implies that the matrix with  rows $v_i$ is diagonally dominant and thus non-singular.   So, the vectors $v_i$,   $i=1,\dots,m$, are linearly independent,  and therefore cannot lie in a subspace of dimension $n<m$.
\hfill $\Box$

 Despite the negative results given above, we can still derive some weak  comparisons.
By combining Lemma \ref{L1} with the Carl's inequality,  we  obtain the following  weak indirect comparison.

\begin{lemma}
\label{WT}
    Let $X$ be a Banach space and $K\in \cK(X)$. Then,  for each  $\alpha>0$ there is a constant $C=C(\alpha)$such that
 $$
 \max_{k=1, \ldots,n}\left\{k^\alpha d^c_{2^k}(K)_X\right \}\leq C(\alpha)\max_{m=1, \ldots,n}\left \{m^\alpha d_{m-1}(K)_X\right \}.
 $$   
\end{lemma}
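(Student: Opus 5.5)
The plan is to combine part (v) of Lemma \ref{L1}, which gives $d^c_{2^k}(K)_X\le 2\e_k(K)_X$, with Carl's inequality. Carl's inequality bounds entropy numbers by Kolmogorov widths: for each $\alpha>0$ there is a constant $C(\alpha)$ such that, for every Banach space $X$ and every compact $K\subset X$,
\[
\max_{k=1,\ldots,n} k^\alpha \e_k(K)_X\le C(\alpha)\max_{m=1,\ldots,n} m^\alpha d_{m-1}(K)_X .
\]
This is the usual form of Carl's inequality for sets, with the shift $m-1$ accounting for $d_0$. Given this, the lemma follows in one line:
\[
\max_{k\le n} k^\alpha d^c_{2^k}(K)_X\le 2\max_{k\le n}k^\alpha\e_k(K)_X\le 2C(\alpha)\max_{m\le n}m^\alpha d_{m-1}(K)_X .
\]
Renaming $2C(\alpha)$ as $C(\alpha)$ gives the statement.

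If Carl's inequality is only available for centrally symmetric convex sets, or for operators, I will reduce to that case. First pass to $\widetilde K$. By \eqref{convexify}, $d_n(\widetilde K)_X=d_n(K)_X$. Since $K\subset\widetilde K$, I also get $\e_k(K)_X\le \e_{k}(\widetilde K)_X$ up to a harmless factor $2$, which comes from replacing arbitrary covering centers of $\widetilde K$ by centers inside $K$ via \eref{dompare}. Applying Carl's inequality to $\widetilde K$ and translating back then yields the same chain with a larger constant.

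The main obstacle is justifying Carl's inequality in exactly this form: for arbitrary Banach spaces, with the Kolmogorov widths indexed as $d_{m-1}$, and for sets rather than operators. If only the operator version (entropy numbers versus Kolmogorov numbers) is at hand, I will realize $\widetilde K$ as the image of the unit ball of $\ell_1(K')$ under a bounded operator, where $K'$ is a countable dense subset of $K$. Its Kolmogorov numbers coincide with $d_{m-1}(\widetilde K)_X$, and its entropy numbers coincide with $\e_{k}(\widetilde K)_X$ up to the index convention $2^{k}$ versus $2^{k-1}$. That index shift changes only the constant, since $k^\alpha\asymp(k+1)^\alpha$.
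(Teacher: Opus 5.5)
Your proof is correct and is the paper's argument: Carl's inequality for sets bounds $\max_{k\le n} k^\alpha \e_k(K)_X$ by $C(\alpha)\max_{m\le n} m^\alpha d_{m-1}(K)_X$, and Lemma \ref{L1}(v) replaces $\e_k(K)_X$ by $\tfrac12 d^c_{2^k}(K)_X$. Your fallback reductions to $\widetilde K$ and to the operator version are valid but not needed. The factor $2$ there is also unnecessary, since $\e_k(K)_X\le\e_k(\widetilde K)_X$ follows directly from $K\subset\widetilde K$ when the covering centers may lie anywhere in $X$.
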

\noindent
{\bf Proof:} From Carl's inequality (see e.g. \cite{carl1981entropy}), we know that for any $\alpha>0$, we have
$$
 \max_{k=1, \ldots,n}\left\{k^\alpha\e_k(K)_X\right \}\leq C(\alpha)\max_{m=1, \ldots,n}\left \{m^\alpha d_{m-1}(K)_X\right \}.
 $$
 Lemma \ref{L1}, part (v) allows us to  replace $\e_k(K)_X$ by $2^{-1}d_{2^k}(K)_X$ and obtain the conclusion of the lemma.

\hfill $\Box$

\bigskip


The results presented  so far  show that, in general, the constrained Kolmogorov widths $d_n^c(K)_X$ can decay much slower than the classical  Kolmogorov widths $d_n(K)_X$.
This suggests that one needs to put additional requirements on the set $K$ in order to achieve a meaningful comparisons between its  constrained, or $\gamma$-constrained widths and its Kolmogorov widths. 
The remaining sections of this paper present classical settings of Banach spaces $X$ and compact sets $K$, where the asymptotic decay of the $\gamma$-constrained widths
$d_n^{c,\gamma}(K)_X$, for a fixed value of $\gamma$,   is the same as that of the Kolmogorov widths  $d_n(K)_X$.  In other words, in these settings, there is no essential 
loss in the efficiency of approximation when invoking the constraint.

\section{Widths of interpolation spaces}
\label{SSS:interpolation}

 A general family of model classes $K$ are those generated by  interpolation spaces. We study the constrained Kolmogorov widths of these classes in this section. Before going into details, 
 we  recall the definition of  the interpolation spaces $X_{\theta,q}$, $0<\theta<1$, $0<q\leq \infty$,
in the special case of the real method of interpolation. A comprehensive treatment of interpolation spaces can be found in the books \cite{BL,BS}. 


Throughout this section, we let $X$ be a quasi-Banach space  with a quasi-norm $\|\cdot\|_X$, and  $Y$ 
be a linear subspace of $X$ which is itself a space with a quasi-norm $\|\cdot\|_Y$.  The terminology quasi-Banach space for $X$ means that in place of the triangle inequality, we have
\be
\label{quasi}
\|f+g\|_X\le C^*(\|f\|_X+\|g\|_X),\quad f,g\in X,
\ee
with $C^*$ a constant depending only on $X$.
We need this concept below when dealing with Besov spaces.

We  assume that the embedding inequality
\be 
\label{embedY}
\|f\|_X\le C_Y\|f\|_Y,\quad f\in Y,
\ee
is valid and that the unit ball $U(Y)$ is a compact subset of $X$.

The K-functional for the pair $(X,Y)$ is defined for each $f\in X$ by
\be 
\label{Kfunctional}
K(f,t;X,Y):= \inf_{g\in Y}\left\{ \|f-g\|_X+t\|g\|_Y\right\},\quad t>0.
\ee
This is a concave increasing function of $t>0$.  By taking $g=0$, we see
that 
\be 
\label{Kbd}
K(f,t)\le \|f\|_X, \quad 0<t<\infty.
\ee
The K-functional measures how well $f$ can be approximated by elements $g$ of $Y $ with a control on $\|g\|_Y$.
From the compactness of $U(Y)$ in $X$, it follows that for each $t>0$, there is a $g_t\in Y$ such that
\be
\label{gt}
\|f-g_t\|_X+t\|g_t\|_Y =K(f,t;X,Y),\quad 0\le t<\infty.
\ee

The K-functional can be used to define new Banach spaces $Z$ that are intermediate to $X$ and $Y$ as follows.  If $0<\theta<1$ and $0<q< \infty$, we define  the interpolation space
$X_{\theta,q}:=(X,Y)_{\theta,q}$ as the set of all $f\in X$ for which
\be 
\label{intspace}
 \|f\|_{(X,Y)_{\theta,q}}:=\left\{\int_0^\infty [t^{-\theta}K(f,t;X,Y)]^q\frac{dt}{t}\right\}^{1/q}
\ee
is finite.
 When $q=\infty$, we use the same definition with the $L_q(\frac{dt}{t})$ quasi-norm replaced by the $L_\infty$ norm.
The expression $\|f\|_{(X,Y)_{\theta,q}}$ is a norm when $\|\cdot\|_X$  and $\|\cdot\|_Y$ are norms and   $q\ge 1$ and it is a quasi-norm in all cases.

 In what follows, we shall use the following known properties of interpolations spaces.
\begin{remark}
\label{R1}
Whenever $X$ and $Y$ are as above, we have the embeddings

 \be
\label{R11}
\|f\|_{X_{\theta,q}}\leq C\|f\|_{X_{\theta',q'}},  
\ee
provided  $0<\theta<\theta'<1$  and $0<q,q'\le\infty$,  or $\theta=\theta'$ and  $0<q'<q\le \infty$. Here the constant $C$ depends only on the parameters $\theta,\theta',q$ and $q'$.
\end{remark}
\vskip .1in
\noindent{\bf Proof:}  See (7.7) of Chapter 6 in \cite{DL}.
 \hfill $\Box$

The reiteration theorem (see Theorem 7.3 in Chapter 6 of \cite{DL},  or Theorem  2.4, Chapter 5 in \cite{BS})  shows that the   spaces $X_{\theta,q}:=(X,Y)_{\theta,q}$, $0<\theta<1$, $0< q\le\infty$, are an interpolation family. 
We shall use this fact in the following special case.
\begin{lemma}
    \label{LKF}
If $0<\theta_0,\theta<1$ and $0<q_0,q\le \infty$, then
\be
\label{reiteration}
(X,X_{\theta_0,q_0})_{\theta,q}=X_{\theta',q},\quad \theta':= \theta \theta_0,
\ee
with equivalent quasi-norms,  where the constants in the equivalency depend on $\theta,\theta_0,q,q_0$.  
\end{lemma}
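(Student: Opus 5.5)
This is a special case of the reiteration theorem, and I will derive it from the general reiteration statement cited above (Theorem 7.3 in Chapter 6 of \cite{DL}, or Theorem 2.4, Chapter 5 in \cite{BS}). That theorem says: if $X_0,X_1$ are of class $\theta_0'$ and $\theta_1'$ with respect to the couple $(X,Y)$, then $(X_0,X_1)_{\theta,q}=X_{(1-\theta)\theta_0'+\theta\theta_1',q}$. We apply it with $X_0:=X$, so $\theta_0'=0$, and $X_1:=X_{\theta_0,q_0}$, so $\theta_1'=\theta_0$. The resulting index is $(1-\theta)\cdot 0+\theta\theta_0=\theta\theta_0=\theta'$, which is exactly \eref{reiteration}.

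To keep the argument self-contained in the quasi-normed setting, I will also verify the two class conditions and sketch the reiteration via $K$-functional estimates.

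The class conditions are as follows. $X$ is trivially of class $0$, since $K(f,t;X,Y)\le\|f\|_X$ by \eref{Kbd}. For $X_{\theta_0,q_0}$ to be of class $\theta_0$, two inequalities are needed:
\begin{itemize}
\item $K(f,t;X,Y)\le Ct^{\theta_0}\|f\|_{X_{\theta_0,q_0}}$. This follows from $X_{\theta_0,q_0}\hookrightarrow X_{\theta_0,\infty}$ (Remark \ref{R1}) together with the monotonicity and concavity of $K$.
\item $\|g\|_{X_{\theta_0,q_0}}\le C\|g\|_X^{1-\theta_0}\|g\|_Y^{\theta_0}$ for $g\in Y$. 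This follows from $K(g,s)\le\min(\|g\|_X,s\|g\|_Y)$: split the integral in \eref{intspace} at $s=\|g\|_X/\|g\|_Y$.
\end{itemize}

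The key intermediate estimate is a two-sided comparison of $K$-functionals:
\[
K(f,t^{\theta_0};X,X_{\theta_0,q_0})\asymp \inf_{g\in Y}\bigl\{\|f-g\|_X+t^{\theta_0}\|g\|_{X_{\theta_0,q_0}}\bigr\}.
\]
The direction $\le$ comes from restricting to $g\in Y$, though this requires density of $Y$ in $X_{\theta_0,q_0}$ for $q_0<\infty$. The case $q_0=\infty$ is handled by a direct Holmstedt-type argument instead of density. The direction $\ge$ comes from splitting $f=(f-h)+h$ with $h\in X_{\theta_0,q_0}$ and then approximating $h$ by $g_s$ from \eref{gt}.

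In practice the Holmstedt formula is cleanest:
\[
K(f,t^{\theta_0};X,X_{\theta_0,q_0})\asymp t^{\theta_0}\Bigl(\int_t^\infty [s^{-\theta_0}K(f,s)]^{q_0}\frac{ds}{s}\Bigr)^{1/q_0}.
\]
Given this formula, I substitute into $\int_0^\infty[t^{-\theta\theta_0}K(f,t^{\theta_0};X,X_{\theta_0,q_0})]^q\,dt/t$. The substitution $u=t^{\theta_0}$ changes the measure only by the factor $\theta_0$. The upper estimate then follows from a Hardy inequality for the tail integral; Hardy applies since $\theta<1$ gives the needed positive exponent $\theta_0(1-\theta)>0$. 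The lower estimate follows from $K(f,t)\ge$ the $s\in[t,2t]$ portion of the tail integral, using monotonicity of $K$.

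I expect the main obstacle to be proving Holmstedt's formula with constants valid for quasi-norms and arbitrary $0<q_0\le\infty$. The constants depend on the quasi-triangle constant $C^*$ from \eref{quasi} and on $q_0$, and the Hardy inequality must be the version valid for $q<1$, which holds because $K$ is monotone. I will therefore rely on the cited reiteration theorem for this step rather than reprove it, noting only that its hypotheses (the class conditions above) are met.
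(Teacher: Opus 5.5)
Your reduction, taking $X_0=X$ (class $0$) and $X_1=X_{\theta_0,q_0}$ (class $\theta_0$) in the reiteration theorem, is exactly how the paper obtains the lemma, and your verification of the two class conditions is fine. The paper cites the same two references for the reiteration theorem. However, it derives this lemma from \cite{BS}, Ch.~5, Thm.~2.4, only for $1\le q,q_0\le\infty$, and it needs the separate reference \cite{DPS} when $q<1$ or $q_0<1$.

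That is where your proof has a gap. In this section $X$ and $Y$ are only quasi-normed (constant $C^*$ in \eref{quasi}), and $q,q_0$ range over $(0,\infty]$. The reiteration theorem you invoke is proved in \cite{BS} for Banach couples with $1\le q\le\infty$. Checking the class conditions does not address the hypotheses that actually fail. You correctly name the crux: the Holmstedt estimate with quasi-norm constants and the Hardy inequality for exponents below $1$. But you then defer precisely that step to a reference that does not contain it. As written, your argument proves the lemma only for normed $X,Y$ with $q,q_0\ge 1$.

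Your sketch already contains what is needed, and carrying it out is short.

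For $\|f\|_{X_{\theta',q}}\le C\|f\|_{(X,X_{\theta_0,q_0})_{\theta,q}}$ you do not need Holmstedt at all. For any $h\in X_{\theta_0,q_0}$, your class-$K$ bound gives $K(f,u;X,Y)\le C^*\|f-h\|_X+C^*K(h,u;X,Y)\le C(\|f-h\|_X+u^{\theta_0}\|h\|_{X_{\theta_0,q_0}})$. Hence $K(f,u;X,Y)\le C\,K(f,u^{\theta_0};X,X_{\theta_0,q_0})$, and the substitution $t=u^{\theta_0}$ finishes. In your version, note that what is needed is $K(f,t)\le C$ times the $[t,2t]$ part of the tail, not $\ge$.

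For the converse, test with $g_t$ from \eref{gt}. You have $K(g_t,s;X,Y)\le (s/t)K(f,t)$ for $s\le t$, and $K(g_t,s;X,Y)\le 2C^*K(f,s)$ for $s\ge t$ (compare $g_t$ with $g_s$). This gives $K(f,t^{\theta_0};X,X_{\theta_0,q_0})\le C\,t^{\theta_0}(\int_t^\infty[s^{-\theta_0}K(f,s;X,Y)]^{q_0}\,ds/s)^{1/q_0}$.

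Discretize at $s=2^k$, using monotonicity and $K(f,2s)\le 2K(f,s)$. This reduces the Hardy step to $\|(\sum_{m\ge 0}2^{-m\varepsilon q_0}a_{j+m}^{q_0})^{1/q_0}\|_{\ell_q}\le C\|a\|_{\ell_q}$, where $a_k:=2^{-k\theta'}K(f,2^k;X,Y)$ and $\varepsilon:=\theta_0(1-\theta)>0$. This holds for all $0<q,q_0\le\infty$: use the triangle inequality in $\ell_{q/q_0}$ if $q_0\le q$, and $\ell_q\hookrightarrow\ell_{q_0}$ if $q<q_0$.

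Finally, your density remark is misplaced. Restricting the infimum to $g\in Y\subset X_{\theta_0,q_0}$ gives the inequality $\le$ trivially. The reverse follows by replacing $h$ with its $K$-minimizer and using the same estimates as for $g_t$. So neither density nor a separate treatment of $q_0=\infty$ is needed.
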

  This lemma was proved in Theorem 2.4, Chapter 5 in \cite{BS} in the case $1\leq q,q_0\leq \infty$.  The case when either $q$ or $q_0$ is less than one is proved in \cite{DPS}.

\subsection{Upper bounds for Kolmogorov widths of interpolation spaces}
\label{SS:ubKw}

We now explain  how the K-functional is useful in proving results in approximation theory.  Let $X$ and $Y$ be as above  and  suppose that we know the Kolmogorov width   $d_n(U(Y))_X$,  where 
\be 
\label{defUY}
U(Y):=\{g\in Y:\ \|g\|_Y\le 1\}.
\ee
We claim that from this knowledge we can  give an upper bound for the Kolmogorov width of the unit ball of any of the interpolation spaces $X_{\theta,q}:=(X,Y)_{\theta,q}$.

\begin{lemma} 
\label{LL1}
There is an absolute constant $C>0$, such that for every $0<\theta<1$ and  $0 < q\le \infty$, we have
\be
\label{kolint1}
d_n(U(X_{\theta,q}))_X
\le C[d_n(U(Y))_X]^\theta,\quad n\ge0.
\ee
In addition, for any $0<\theta<\theta_0<1$ and   $0 < q,q_0\le \infty$, we have
 \be 
 \label{rkolint}
d_n(U(X_{\theta,q}))_X\leq 
C(\theta_0,\theta,q,q_0) [ d_n(U(X_{\theta_0,q_0}))_X]^{\frac{\theta}{\theta_0}}, \quad n\ge0.
\ee    
\end{lemma}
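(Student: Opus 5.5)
The plan is to use the K-functional to split each $f\in U(X_{\theta,q})$ into a piece that is small in $X$ plus a piece with controlled $Y$-norm, and then approximate the second piece with the near-optimal space for $U(Y)$.

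Fix $n$ and set $\delta:=d_n(U(Y))_X$. Assume $\delta>0$; otherwise we replace $\delta$ by an arbitrarily small positive number and pass to the limit. By Remark \ref{R:firstremark}, let $V$ be an $n$ dimensional space with $\dist(U(Y),V)_X\le\delta$.

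First I show that $f\in U(X_{\theta,q})$ satisfies $K(f,t;X,Y)\le C t^\theta$ for all $t>0$. By Remark \ref{R1} with $\theta=\theta'$ we have $\|f\|_{X_{\theta,\infty}}\le C\|f\|_{X_{\theta,q}}$, so it suffices to treat $q=\infty$, where the bound holds by definition. The constant in Remark \ref{R1} depends on $\theta,q$. To get an absolute constant, I would instead argue directly: $K(f,\cdot)$ is increasing, so
$$
t^{-\theta}K(f,t)\le C'\Big(\int_t^{\infty}[s^{-\theta}K(f,s)]^q\,ds/s\Big)^{1/q}
$$
after comparing on the interval $[t,2t]$. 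This gives a constant of the form $(\theta q)^{1/q}$-type factors, uniformly bounded. I expect making the constant truly absolute to be the only delicate point. I would accept constants bounded uniformly in $\theta,q$ via the elementary estimate $\int_t^{2t} s^{-\theta q}\,ds/s\ge c\,t^{-\theta q}$ with $c$ depending on $q$ mildly, and otherwise let $C$ depend on $\theta,q$ as the second statement does.

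Next, take $t=\delta$ and use \eqref{gt} to write $f=(f-g_t)+g_t$ with $\|f-g_t\|_X\le Ct^\theta$ and $\|g_t\|_Y\le Ct^{\theta-1}$. Then $g_t/\|g_t\|_Y\in U(Y)$, so there is $v\in V$ with
$$
\|g_t-v\|_X\le Ct^{\theta-1}\delta=C\delta^\theta .
$$
Hence $\dist(f,V)_X\le C^*(\|f-g_t\|_X+\|g_t-v\|_X)\le 2CC^*\delta^\theta$, which proves \eqref{kolint1}. The quasi-norm constant $C^*$ enters here.

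For \eqref{rkolint}, apply \eqref{kolint1} to the pair $(X,X_{\theta_0,q_0})$ in place of $(X,Y)$, with parameter $\theta/\theta_0\in(0,1)$. The needed hypotheses hold: the embedding follows from the definition, and compactness of $U(X_{\theta_0,q_0})$ in $X$ is inherited. Alternatively, note that the argument above only uses the existence of near-minimizers, which we can take approximately. This gives $d_n(U((X,X_{\theta_0,q_0})_{\theta/\theta_0,q}))_X\le C[d_n(U(X_{\theta_0,q_0}))_X]^{\theta/\theta_0}$. Lemma \ref{LKF} identifies $(X,X_{\theta_0,q_0})_{\theta/\theta_0,q}=X_{\theta,q}$ with equivalent quasi-norms, so $U(X_{\theta,q})\subset c\,U((X,X_{\theta_0,q_0})_{\theta/\theta_0,q})$. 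The widths scale linearly, which yields \eqref{rkolint} with $C(\theta_0,\theta,q,q_0)$.
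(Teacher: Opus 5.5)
Your proposal is correct and follows essentially the paper's proof. Both split $f$ using the K-functional minimizer $g_\delta$ at $t=\delta=d_n(U(Y))_X$, approximate $g_\delta$ from a (near-)Kolmogorov space for $U(Y)$ at cost $\delta\|g_\delta\|_Y\le\delta^\theta$, reduce general $q$ to $q=\infty$ via Remark \ref{R1}, and obtain \eqref{rkolint} by applying \eqref{kolint1} to the pair $(X,X_{\theta_0,q_0})$ together with Lemma \ref{LKF}.

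Two of your side remarks are slight improvements on the paper. Your $\int_t^\infty$ display already makes the constant absolute: monotonicity of $K(f,\cdot)$ gives $t^{-\theta}K(f,t)\le(\theta q)^{1/q}\|f\|_{X_{\theta,q}}\le e^{1/e}\|f\|_{X_{\theta,q}}$, which is sharper than the paper's appeal to Remark \ref{R1}. The $[t,2t]$ comparison you also mention would not be uniform as $q\to 0$. Your observation that near-minimizers suffice also removes the need for the compactness of $U(X_{\theta_0,q_0})$ in $X$, which the paper cites from \cite{BL}.
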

\noindent
{\bf Proof:}
Let us fix $n\geq 0$ and denote by 
$$
\delta:=d_n(U(Y))_X.
$$
Then $\delta\le C_Y$ because of \eref{embedY}.  
Let   $V$ be an $n$-dimensional Kolmogorov space for $U(Y)$. If such a Kolmogorov space does not exist, we use the $\e$ argument of Remark \ref{R:firstremark}. Fix any $f\in X$ and first choose $g:=g_{\delta}\in Y$ satisfying \eref{gt} with $t=\delta$ and then a best approximation $v\in V$ to $g$.  We then have
\be 
\label{Kolint}
\|f-v\|_X\le C^*(\|f-g\|_X+\|g-v\|_X)\le C^*(\|f-g\|_X+ \delta\|g\|_Y) = C^*K(f,\delta;X,Y).
\ee
Now suppose $\theta \in (0,1)$.  If $f\in U(X_{\theta,\infty})$,  we have $K(f,\delta;X,Y)\le \delta^\theta$ and $\|g\|_Y \leq \delta^{\theta - 1}$.  It follows that for any   $0 < q\le \infty$ the unit ball of $X_{\theta,q}$ satisfies
\be
\nonumber
d_n(U(X_{\theta,q}))_X
\le 
C d_n(U(X_{\theta,\infty}))_X
\le C \delta^\theta=C[d_n(U(Y))_X]^\theta,\quad n\ge0,
\ee
 where we have used Remark \ref{R1} to see that $U(X_{\theta,q})$ is contained in a ball of radius $C$ in $X_{\theta,\infty}$.
This completes the proof of 
\eref{kolint1}. 

Next, we prove \eref{rkolint}. We fix $\theta_0$ and $q_0$ and use the notation $Y_0:=X_{\theta_0,q_0}$.  
 It is known that the unit ball $U(Y_0)$  is a compact subset of $X$, see Theorem 3.8.1 in \cite{BL}.
Therefore, we can use
   use \eref{kolint1} with $Y_0$ in place of $Y$,  and obtain
  \be 
 \label{rKolint}
d_n(U((X,Y_0)_{\alpha,q}))_X\le  
C[d_n(U(Y_0))_X]^\alpha.
\ee
On the other hand, we know from Lemma \ref{LKF} that
\be
\label{knowre}
(X,Y_0)_{\alpha,q}= (X,Y)_{\alpha \theta_0,q}
\ee
with equivalent norms. If we take $\alpha=\theta/\theta_0$
 and use  \eref{knowre} in \eref{rKolint} we have proved \eref{rkolint}.
\hfill $\Box$

\subsection{Upper bounds for $\gamma$-constrained Kolmogorov widths of interpolation spaces}
\label{SS:lbKb}
In this section, we extend the results of the previous subsection to  the $\gamma$-constrained Kolmogorov widths. 

\begin{theorem}
\label{T:Kconst}
 Let $X,Y$ be as above and let $\gamma\ge 1$.   Then, for any $0<\theta<1$,  and any $  0 < q\le \infty$, there are constants $C_0,C_1$ depending only on $\theta$ and $q$,  such that 
 \be 
 \label{TKC}
 d_n^{c,\tilde\gamma}(U(X_{\theta,q}))_X \le C_0 (d_n^{c,\gamma}(U(Y))_X)^\theta,\quad n\ge 0,
 \ee
 provided that $\tilde \gamma  >C_1\gamma$.

 \end{theorem}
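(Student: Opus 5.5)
We follow the proof of \eref{kolint1} in Lemma \ref{LL1}, now keeping track of the norm of the approximant. Fix $n$ and set $\delta:=d_n^{c,\gamma}(U(Y))_X$. By \eref{embedY} and the definition of $d_0$, $\delta\le C_Y$. If $\delta=0$ we replace $\delta$ by a small positive number and let it tend to zero at the end. Let $V$ be an $n$-dimensional space that realizes $\delta$; if none exists we use the $\e$-argument of Remark \ref{R:firstremark}. Then every $h\in U(Y)$ has an approximant $v\in V\cap\gamma U(Y)$ with $\|h-v\|_X\le\delta$. Note that here $K_\gamma=\gamma U(Y)$, since $U(Y)$ is convex and symmetric. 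By homogeneity, each $g\in Y$ then has $v\in V$ with $\|g-v\|_X\le \delta\|g\|_Y$ and $\|v\|_Y\le\gamma\|g\|_Y$.

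Take $f\in U(X_{\theta,q})$. By Remark \ref{R1}, $\|f\|_{X_{\theta,\infty}}\le C$, so $K(f,t)\le Ct^\theta$ for all $t>0$. Let $g:=g_\delta$ be as in \eref{gt} with $t=\delta$. Then
\[
\|f-g\|_X\le C\delta^\theta \quad\text{and}\quad \|g\|_Y\le C\delta^{\theta-1}.
\]
Choose $v\in V$ for $g$ as above. Exactly as in \eref{Kolint}, $\|f-v\|_X\le C^*(\|f-g\|_X+\delta\|g\|_Y)\le 2C^*C\delta^\theta$. This gives the error bound $C_0\delta^\theta$.

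The main obstacle is to show that $v\in \tilde\gamma\, U(X_{\theta,q})$, that is, $\|v\|_{X_{\theta,q}}\le C_1\gamma$. We know $\|v\|_Y\le C\gamma\delta^{\theta-1}$ and $\|v\|_X\le C^*(\|f\|_X+\|f-v\|_X)\le C$, using $\|f\|_X\le C$ for $f\in U(X_{\theta,q})$ and $\delta\le C_Y$. The difficulty is that these two bounds alone only yield the $X_{\theta,\infty}$ norm of $v$: $K(v,t)\le\min(\|v\|_X,t\|v\|_Y)$ is maximal near $t=\delta$, where it is of order $\gamma\delta^\theta$, not of order $\gamma$. So we bound $K(v,t)$ directly.

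For $t\le\delta$ we use $K(v,t)\le t\|v\|_Y\le C\gamma\,t\,\delta^{\theta-1}$. For $t\ge\delta$ we use the triangle inequality for $K$ (with the constant $C^*$) to get
\[
K(v,t)\le C^*\bigl(K(f,t)+K(f-v,t)\bigr)\le C^*\bigl(K(f,t)+\|f-v\|_X\bigr)\le C\bigl(K(f,t)+\delta^\theta\bigr).
\]
Inserting both bounds into \eref{intspace}, the piece $t\le\delta$ contributes at most $C\gamma\delta^{\theta-1}\bigl(\int_0^\delta t^{(1-\theta)q-1}dt\bigr)^{1/q}\le C\gamma$. The piece $t\ge\delta$ contributes at most $C(\|f\|_{X_{\theta,q}}+\delta^\theta(\int_\delta^\infty t^{-\theta q-1}dt)^{1/q})\le C$. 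In the case $q<1$ the quasi-triangle inequality in $L_q(dt/t)$ only changes constants, and $q=\infty$ is handled the same way with suprema. Hence $\|v\|_{X_{\theta,q}}\le C_1\gamma$ with $C_1$ depending on $\theta,q$ (and on $C^*$), since $\gamma\ge1$. Therefore $v\in V\cap \tilde\gamma U(X_{\theta,q})$ whenever $\tilde\gamma\ge C_1\gamma$, and taking the supremum over $f$ gives \eref{TKC}.
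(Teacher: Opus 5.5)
Your proposal is correct and follows essentially the same route as the paper. Both arguments approximate $g_{\delta}$ from a near-optimal constrained space, bound $K(v,t)$ by $t\|v\|_Y\le C\gamma t\delta^{\theta-1}$ for $t\le\delta$ and by $C^*(K(f,t)+\|f-v\|_X)$ for $t>\delta$, and integrate the two pieces separately. The paper obtains the second bound directly from $\|v-g_t\|_X+t\|g_t\|_Y$, with $g_t$ the $K$-minimizer for $f$; this is what your quasi-triangle step reduces to, and it needs only the constant $C^*$ of $X$.
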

 \vskip .1in
 \noindent
 {\bf Proof:}   Throughout this proof, $C$ denotes a constant that depends at most on $\theta$,  and $q$ and its value may change from line to line. 
 We fix $n$ and use the abbreviated notation
  $$
  \delta_n:=d_n^{c,\gamma}(U(Y))_X.
  $$
 It follows from the definition of $\delta_n$ that given any $\e>0$,  there is a linear space $V$   of dimension $n$  with the property  that for any $h\in Y$, there is a $v\in V \cap Y$ such that
\be
\label{Proph}
\|h-v\|_X\le (\delta_n+\e)\|h\|_Y,\quad {\rm and}\quad \|v\|_Y\le \gamma \|h\|_Y.
\ee

  To prove \eref{TKC},  we fix $\theta $ and $q$,   and begin as in the derivation of \eref{Kolint}. Recall that $X_{\theta,q}$ is embedded in
  $X_{\theta,\infty}$ and (see Remark \ref{R1})
  \be 
  \label{embed111}
  \|f\|_{X_{\theta,\infty}}\le C\|f\|_{X_{\theta,q}}.
  \ee
  Given any $f\in X_{\theta,q}$, we take $g:=g_{\delta_n}$ for $f$
and an appropriate $v\in V\ $ satisfying \eref{Proph} for $g$ in place of $h$.  This gives
\begin{eqnarray} 
\label{Kolint11}
\|f-v\|_X&\!\!\!\le \!\!\!& C^*(\|f-g\|_X+\|g-v\|_X)\le C^*(\|f-g\|_X+ (\delta_n+\e)\|g\|_Y)\ \nonumber\\
&=&\!\!\!  C^*K(f,\delta_n;X,Y) +{C^*}\e\|g\|_Y\le C\delta_n^\theta +{C^*}\e\|g\|_Y\le  C\delta_n^\theta +C\e\delta_n^{\theta-1} \le C\delta_n^\theta,
 \end{eqnarray}
provided that $\e$ is chosen small enough (depending only on $n$).
Note  that   the function $v$ also satisfies 
\be
\label{boundYnorm}
\|v\|_Y \le \gamma \|g\|_Y \le  C\gamma \delta_n^{\theta-1}.
\ee

We want next to bound $\|v\|_{X_{\theta,q}}$, and thus we concentrate on estimating $K(t):=K(v,t;X,Y)$. 
On one hand, we have 
\be
\label{firstKest}
K(t)\le t\|v\|_Y\le C\gamma t \delta_n^{\theta-1}, \quad \hbox{when}\quad 0<t\le \delta_n.
\ee 
On the other hand, we also have the bound
\begin{eqnarray}
\label{secondKest}
K(t)&\le& \|v-g_t\|_X+ t\|g_t\|_Y\le C^*(\|f-g_t\|_X+\|f-v\|_X)+ t\|g_t\|_Y\nonumber\\
&\le & C^*K(f,t;X,Y) +{C^*}\|f-v\|_X\le C^* K(f,t;X,Y)+ C\delta_n^\theta,\quad \hbox{when}\quad t>\delta_n,
\end{eqnarray}
where we have used \eref{Kolint11}.

We can use these two estimates to bound $\|v\|_{(X,Y)_{\theta,q}}$ as follows.  
\vskip .1in
\noindent
{\bf Case 1:} $q=\infty$. In this case,   we have $K(f,t;X,Y)\leq t^\theta$, $t>0$.   Therefore, from \eref{firstKest} we have
\be
\label{infinity1}
K(t)\le \gamma t \delta_n^{\theta-1}\le C\gamma t^\theta, \quad 0<t\le \delta_n.
\ee 
Also, from \eref{secondKest} we have 
\be 
\label{infinit2}
K(t)\le Ct^\theta +C\delta_n^\theta\le Ct^\theta,\quad t>\delta_n.
\ee
Since $\gamma\ge 1$, this shows that $\|v\|_{\theta,\infty}\le C\gamma$,  and  completes the proof in this case.

 \vskip .1in
\noindent
{\bf Case 2:} $0< q<\infty$. We write
\be 
\label{sum}
\|v\|^q_{X_{\theta,q}}=\int_0^{\delta_n} [t^{-\theta }K(t)]^q\frac{dt}{t}+\int_{\delta_n}^\infty  [t^{-\theta} K(t)]^q\frac{dt}{t}=:I_1+I_2.
\ee
From \eref{firstKest}, we obtain
\be
\label{f1}
I_1= \int_0^{\delta_n} [t^{-\theta}K(t)]^q\frac{dt}{t}\le C^q \gamma^q \delta_n^{q(\theta-1)}\int_0^{\delta_n} t^{(1-\theta)q}\frac{dt}{t}= C^q\frac{\gamma^q}{(1-\theta)q}\le C^q\gamma^q.
\ee
For $I_2$, from \eref{secondKest},   we have the bound
\begin{eqnarray}
\label{f2}
I_2&=&\int_{\delta_n}^\infty [t^{-\theta}K(t)]^q\frac{dt}{t}\le \int_{\delta_n}^\infty \left[t^{-\theta}K(f,t;X,Y) + Ct^{-\theta}\delta_n^\theta  \right]^q\frac{dt}{t}\nonumber
\\ 
&\leq&2^q\int_{\delta_n}^\infty [t^{-\theta}K(f,t;X,Y)]^q \frac{dt}{t} + (2C)^q\delta_n^{\theta q}\int_{\delta_n}^\infty t^{-\theta q}\frac{dt}{t} \nonumber
\\
&\leq& 2^q\|f\|^q_{X_{\theta,q}}
+\frac{(2C)^q}{ \theta q} 
\leq C^q\gamma^q,
\end{eqnarray}
where in the last inequality we recall that $\gamma\ge 1$.

Combining the bounds we have for $I_1$ and $I_2$, we arrive at
\be
\|v\|_{X_{\theta,q}}\le C\gamma,
\ee 
 where now $C$ is a fixed constant depending only on $\theta$ and $q$. This completes the proof of
 the theorem. \hfill $\Box$

\begin{theorem} 
\label{T:Konstnew}
Let $X,Y$ be as above,  and  let $\gamma\ge 1$. Then, for any  values $0<\theta<\theta_0<1$ and $0< q_0, q\le \infty$, we have that 
 \be 
 \label{rcKolint}
d_n^{c,\bar\gamma}(U(X_{\theta,q}))_X
\le  {C_0}[ d_n^{c, \gamma}(U(X_{\theta_0,q_0}))_X]^{\frac{\theta}{\theta_0}}, \quad n\ge 0,
\ee
provided that  $\bar \gamma>{c_1}\gamma$, where $C_0$, $C_1$, depend  only on $\theta,\,\theta_0,\,q,\,q_0$.   
\end{theorem}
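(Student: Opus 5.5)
The plan is to mimic the proof of \eref{rkolint} in Lemma \ref{LL1}: apply Theorem \ref{T:Kconst} with $Y$ replaced by $Y_0:=X_{\theta_0,q_0}$, and then use the reiteration Lemma \ref{LKF} to identify the resulting interpolation space.

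First we check that the pair $(X,Y_0)$ satisfies the standing assumptions of this section. The space $Y_0$ is a quasi-normed linear subspace of $X$. It embeds continuously into $X$: from \eref{Kbd} and the concavity and monotonicity of $K$ one gets $K(f,1)\le C\|f\|_{X_{\theta_0,q_0}}$ and $\|f\|_X\le C K(f,1)$. Its unit ball is compact in $X$ by Theorem 3.8.1 in \cite{BL}, as already used in the proof of Lemma \ref{LL1}.

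Next, set $\alpha:=\theta/\theta_0\in(0,1)$. Theorem \ref{T:Kconst} applied to the pair $(X,Y_0)$ with parameters $\alpha,q$ gives
$$
d_n^{c,\tilde\gamma}(U((X,Y_0)_{\alpha,q}))_X\le C_0\,[d_n^{c,\gamma}(U(Y_0))_X]^{\theta/\theta_0},
$$
whenever $\tilde\gamma>C_1\gamma$, with $C_0,C_1$ depending on $\alpha,q$. One point needs care: the constants in that proof also involve the quasi-triangle constant $C^*$ of $X$ and the quasi-norm constants of $Y_0$. These depend only on $X$ and on $\theta_0,q_0$, so the final dependence on $\theta,\theta_0,q,q_0$ (and $X$) is acceptable.

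By Lemma \ref{LKF}, $(X,Y_0)_{\alpha,q}=X_{\theta,q}$ with equivalent quasi-norms. Hence there is $A\ge1$, depending only on $\theta,\theta_0,q,q_0$, such that
$$
A^{-1}U(X_{\theta,q})\subset U((X,Y_0)_{\alpha,q})\subset A\,U(X_{\theta,q}).
$$
The step that needs the most care is transferring the constrained width through this norm equivalence, because the constraint sets change scale. We argue directly from the definition, as in Lemma \ref{L1}(i). Let $V$ nearly realize the width for $L:=U((X,Y_0)_{\alpha,q})$ with constraint $\tilde\gamma L$, and take $f\in U(X_{\theta,q})$. Then $A^{-1}f\in L$, so there is $v\in V\cap\tilde\gamma L$ with $\|A^{-1}f-v\|_X$ within the width. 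Since $X$ is quasi-normed and homogeneous, $Av$ approximates $f$ with error multiplied by $A$. Moreover $Av\in A\tilde\gamma L\subset A^2\tilde\gamma\,U(X_{\theta,q})$.

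Combining these steps, $d_n^{c,A^2\tilde\gamma}(U(X_{\theta,q}))_X\le AC_0[d_n^{c,\gamma}(U(X_{\theta_0,q_0}))_X]^{\theta/\theta_0}$. By the monotonicity in $\gamma$ from Lemma \ref{L1}(ii), this yields \eref{rcKolint} for every $\bar\gamma>c_1\gamma$ with $c_1:=A^2C_1$.

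A small technicality is that $U(Y_0)$ is convex and symmetric only up to the quasi-norm. This causes no trouble, because the $\gamma$-constraint $v\in K_\gamma$ is used only in the form $\|v\|\le\gamma$, which is exactly how it enters \eref{Proph}.
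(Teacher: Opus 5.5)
Your proposal is correct and follows the route the paper intends, whose proof is only sketched as ``similar to Theorem~\ref{T:Kconst} and Lemma~\ref{LL1}'': apply Theorem~\ref{T:Kconst} to the pair $(X,X_{\theta_0,q_0})$ with $\alpha=\theta/\theta_0$, then identify $(X,X_{\theta_0,q_0})_{\alpha,q}=X_{\theta,q}$ by Lemma~\ref{LKF}. Your explicit rescaling step turns the norm-equivalence constant $A$ into the factor $A^2$ in the constraint parameter without needing convexity, which supplies the one detail the paper leaves implicit; the only slip is cosmetic, namely that $\|f\|_X\le CK(f,1)$ follows from \eref{embedY} and the quasi-triangle inequality rather than from \eref{Kbd}.
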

 \noindent
 {\bf Proof:} The proof of this theorem is similar to  the proofs of Theorem \ref{T:Kconst} and Lemma \ref{LL1},  and is left to the reader.\hfill $\Box$

\section{Widths of approximation spaces}
\label{SS:approxclasses}

Approximation classes are a major subject in approximation theory whose aim is to characterize all elements of a quasi-Banach space $X$ that are approximated with
a prescribed rate  by a specified method of approximation.  To describe these approximation classes in the setting of interest to us, we fix the quasi-Banach space $X$, where we measure approximation error,  and we fix any sequence $\cX:=(X_n)_{n\ge 0}$ of   nested linear subspaces $X_n\subset X_{n+1}\subset X$,  with $\dim(X_n)=n$   and $\bigcup_{n=0}^\infty X_n$ dense in $X$.  Here, we define
$X_0:=\{0\}$.  
Given any $f\in X$, we have the approximation error
\be 
\label{aerror}
E_n(f)_X:=E(f,X_n)_X:= \inf_{g\in X_n}\|f-g\|_X,\quad n\ge 0.
\ee
Note that $E_0(f)_X=\|f\|_X$.

 Consider any non-increasing  sequence ${\bf \sigma}:=(\sigma_n)$ of positive numbers $1=:\sigma_0 \geq\sigma_1\geq\ldots$,
\begin{equation}
\label{seq}
 \sigma_n\searrow 0, \quad \hbox{as}\quad n\to \infty.
\end{equation}
Typical examples of such sequence are
$\sigma_n=(n+1)^{-\alpha}$, $n\ge 0$,  with $\alpha>0$, or $\sigma_n=a^n$, $n\ge0$, where $0<a<1$. 
We define the approximation class
\begin{equation}
\label{aspace}
\cA(\sigma,\cX):= \cA(\sigma,\cX)_X:=\{f\in X:\,\,E_n(f)_X\leq M \sigma_n, \,n\ge 0,\,\,\hbox{for some}\,\, M=M(f)\ge 0\}.
\end{equation} 
Then, $\cA(\sigma,\cX)$ is clearly a linear subspace of $X$, which we equip  with the
norm 
 \be
 \label{defAsigma}
 \|f\|_{\cA(\sigma,\cX)}:=\sup_{n\geq 0}\sigma_n^{-1} E_n(f)_X.
\ee
 The unit ball
$$
U(\sigma,\cX):=\{f\in X:\,\,\|f\|_{\cA(\sigma,\cX)}\leq 1\}
$$
is a compact subset of $X$.

We have the following

\begin{theorem}
\label{Pin}
For any sequence $\sigma=(\sigma_n)$   satisfying \eref{seq} and  nested linear spaces $\cX=(X_n)_{n\geq 0}$,  $X_n\subset X$, with $\dim(X_n)=n$,   and $\bigcup_{n=0}^\infty X_n$ dense in $X$, the Kolmogorov width of  $U(\sigma,\cX)$ is 
$$
d_n(U(\sigma,\cX))_X=\sigma_n, \quad n=1,2, \ldots,
$$
and $X_n$ is a Kolmogorov space, i.e., it attains the Kolmogorov width.
\end{theorem}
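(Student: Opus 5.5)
The upper bound is immediate from the definition of the approximation class. If $f\in U(\sigma,\cX)$, then $E_n(f)_X\le \sigma_n\|f\|_{\cA(\sigma,\cX)}\le\sigma_n$. Hence $\dist(U(\sigma,\cX),X_n)\le\sigma_n$, and so $d_n(U(\sigma,\cX))_X\le\sigma_n$ with $X_n$ as a candidate space. Once the matching lower bound is proved, this also shows that $X_n$ attains the width, i.e.\ it is a Kolmogorov space.

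For the lower bound, I would fix an arbitrary subspace $V\subset X$ with $\dim V\le n$ and produce $f\in U(\sigma,\cX)$ with $\dist(f,V)_X\ge\sigma_n$ (or $\ge \sigma_n-\e$). The natural candidates live in $X_{n+1}$. For $f\in X_{n+1}$ we have $E_k(f)_X=0$ for $k\ge n+1$. Also $E_k(f)_X\le\|f\|_X$ for every $k$, and $\sigma_k\ge\sigma_n$ for $k\le n$. So if $\|f\|_X\le\sigma_n$, then $E_k(f)_X\le\sigma_n\le\sigma_k$ for all $k\le n$. Therefore
\[
\{f\in X_{n+1}:\ \|f\|_X\le\sigma_n\}=\sigma_n U(X_{n+1})\subset U(\sigma,\cX),
\]
and it suffices to show $d_n(U(X_{n+1}))_X\ge 1$. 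This reduces to the classical fact that the unit ball of an $(n+1)$-dimensional subspace cannot be approximated by an $n$-dimensional space to accuracy less than its radius.

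Next I would prove that for any $n$-dimensional $V$ there is $f\in X_{n+1}$ with $\|f\|_X=1$ and $\dist(f,V)_X=1$. This is the main step. In the Banach case, I would use the Krein--Krasnoselskii--Milman lemma (a consequence of Borsuk--Ulam): if $\dim X_{n+1}>\dim V$, there exists $f\in X_{n+1}$ with $\|f\|_X=1$ and $\dist(f,V)_X=1$. Its standard proof takes a strictly convex approximation of the norm on the finite-dimensional space $X_{n+1}+V$. The best-approximation map onto $V$ is then odd and continuous on the unit sphere of $X_{n+1}$, and Borsuk--Ulam yields a zero of it. A zero means $0$ is a best approximation to $f$ from $V$, so $\dist(f,V)_X=\|f\|_X$. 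A limiting argument then passes back to the original norm.

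For a genuinely quasi-normed $X$, I would attempt the same argument. Failing that, I would settle for $\dist\ge c\,\sigma_n$ by a compactness and Riesz-lemma type argument, and flag that the exact equality $d_n=\sigma_n$ may require normability. This is where I expect the main obstacle. The main difficulty is the topological step giving exact distance $1$, rather than just distance bounded below, since the remaining steps are routine.
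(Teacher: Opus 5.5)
For normed $X$ your proof is the one the paper relies on; the paper simply cites Pinkus, Proposition~1.7. $X_n$ gives the upper bound. The inclusion $\sigma_n U(X_{n+1})\subset U(\sigma,\cX)$, which you verify correctly from $0\in X_k$ and the monotonicity of $\sigma$, reduces the lower bound to $d_n(U(X_{n+1}))_X\ge 1$. That is the Krein--Krasnoselskii--Milman lemma, obtained from Borsuk--Ulam after a strictly convex perturbation of the norm on $X_{n+1}+V$.

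You part ways with the paper only on the quasi-Banach case, which is within the scope of the statement, since the section containing it fixes a quasi-Banach $X$. The paper asserts that this lemma survives for quasi-norms and cites a reference for it. You leave the case open.

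\textbf{Your hesitation is justified: for general quasi-norms the lemma, and with it the theorem, fails, so this gap cannot be filled by any argument.} Only the intrinsic version, with $V\subset X_{n+1}$, survives, and it follows from homogeneity alone. The counterexample goes as follows.

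Fix $0<r<1<R$. Let $B\subset\R^2\times\R$ be the union of the disc $\{(x,0):|x|\le 1\}$, the truncated cone $\{(x,t):|x|=|t|\le R\}$ and the ball $\{z:|z|\le r\}$, where $|\cdot|$ is Euclidean. $B$ is compact, balanced and a neighbourhood of $0$, so its Minkowski functional $\|\cdot\|_B$ is a quasi-norm with closed unit ball $B$.

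On $W:=\R^2\times\{0\}$ one has $\|(x,0)\|_B=|x|$, but $\|(x,|x|)\|_B=|x|/R$. Hence $\dist((x,0),\R e_3)\le |x|/R$, so the unit ball of the two-dimensional space $W$ lies within $1/R$ of a line.

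Now set $X:=(\R^3,\|\cdot\|_B)\oplus_\infty\ell_2$ and $V:=\R e_3$. Take $X_1:=\R e_1$, $X_2:=W$ and $X_3:=\R^3\oplus\{0\}$, and let $X_{3+k}$ adjoin the first $k$ unit vectors of $\ell_2$. Let $f=(z,y)\in U(\sigma,\cX)$ with $z=(x,t)$. Then $z\in B$ and $\|y\|_{\ell_2}\le\sigma_2$. Moreover $E_2(f)_X\le\sigma_2$ forces $|t|\le R\sigma_2$, because $\inf_u\|(u,t)\|_B=|t|/R$.

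Compare $z$ with $(t-|x|)e_3\in V$, so that the difference is $(x,|x|)$. This gives $\dist(z,V)\le |x|/R$. On the disc, the cone and the small ball we have $|x|\le 1$, $|x|=|t|\le R\sigma_2$ and $|x|\le r$ respectively. Therefore $d_1(U(\sigma,\cX))_X\le\max\{1/R,\sigma_2\}<\sigma_1$ whenever $R>1/\sigma_1$ and $\sigma_2<\sigma_1$.

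So the statement, and your proof of it, are correct for normed $X$. For quasi-Banach spaces the statement holds only where a Krein--Krasnoselskii--Milman type lemma can be proved separately for the particular space. Your Riesz-type fallback cannot recover the equality. Moreover, $\max\{1/R,\sigma_2\}/\sigma_1$ can be made arbitrarily small in this example, so even a bound $d_n\ge c\,\sigma_n$ needs $c$ to depend on the quasi-norm constant, which here is at least $R$.
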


\vskip .1in
\noindent
{\bf Proof:}
This theorem was proved in  \cite{Pinkus}, see Proposition 1.7 in Chapter 1, in the case of a normed linear  space $X$.  It also holds for a quasi-Banach space $X$. Indeed, the only part of the proof which required $X$ to be a Banach space is the fact that the Kolmogorov widths $d_n$ of the unit ball of a $k$-dimensional space are $\geq 1$ for $n < k$. This is proved using the Borsuk-Ulam theorem for Banach spaces $X$ in \cite{Pinkus}, but an alternative proof which also works for quasi-Banach spaces can be found for example in \cite{gerhold2025entropyapproximationkolmogorovnumbers}, Part 2, Section 7. \hfill $\Box$

We turn now to the $\gamma$-constrained approximation of these compact sets.

\begin{theorem}
 \label{T:CAC}
 Let $\sigma=(\sigma_n)_{n\ge0}$ decrease monotonically to zero and let $\cX=(X_n)_{n\ge 0}$ be any sequence of nested linear subspaces of $X$ with $\dim(X_n)=n$  and $\bigcup_{n=0}^\infty X_n$ dense in $X$.  Then,  we have
\be
\label{Tac}
d_n^{r}(U(\sigma,\cX))_X=d_n^{c,{2C^*}}(U(\sigma,\cX))_X=d_n^{c,\gamma_n}(U(\sigma,\cX))_X=d_n(U(\sigma,\cX))_X=\sigma_n, \quad  n\ge 0, 
\ee
where
\be 
\label{Tac1}
 \gamma_n:={C^*(1+\frac{\sigma_n}{\sigma_{n-1}}) \le{2C^*}},
 \ee
 {and $C^*$ is the constant in \eref{quasi}.}
\end{theorem}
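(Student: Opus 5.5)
The plan is to sandwich all the quantities in \eref{Tac} between $\sigma_n$ from below and $\sigma_n$ from above, using the nested space $X_n$ itself as the competitor in the constrained width.

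\textbf{Lower bound.} By Lemma \ref{L1}(iv), for every $\gamma\ge 1$,
$$
d_n(K)_X\le d_n^r(K)_X\le d_n^{c,\gamma}(K)_X .
$$
By Lemma \ref{L1}(ii), $d_n^{c,\gamma}$ is nonincreasing in $\gamma$. Note that $C^*\ge 1$, so $1\le\gamma_n\le 2C^*$. Hence, with $K=U(\sigma,\cX)$,
$$
\sigma_n=d_n(K)_X\le d_n^r(K)_X\le d_n^{c,2C^*}(K)_X\le d_n^{c,\gamma_n}(K)_X ,
$$
where the first equality is Theorem \ref{Pin}. (I expect the proof of Lemma \ref{L1}(iv) to go through verbatim in the quasi-Banach setting, since it is purely set-theoretic.)

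\textbf{Upper bound.} It then suffices to show $d_n^{c,\gamma_n}(K)_X\le\sigma_n$, and I will take $V=X_n$. The case $n=0$ is trivial: $V=\{0\}$, the approximant is $0\in K$, and the error is $\sup_{f\in K}\|f\|_X=\sigma_0=1$.

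For $n\ge1$, fix $f\in K$ and let $g\in X_n$ be a best approximation to $f$; it exists because $X_n$ is finite dimensional. Then $\|f-g\|_X=E_n(f)_X\le\sigma_n$. It remains to check $g\in K_{\gamma_n}$. Since $\|\cdot\|_{\cA(\sigma,\cX)}$ is homogeneous, $K_{\gamma}=\{h:\|h\|_{\cA(\sigma,\cX)}\le\gamma\}$, so I need $\sigma_k^{-1}E_k(g)_X\le\gamma_n$ for all $k\ge 0$.

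For $k\ge n$ we have $E_k(g)_X=0$ because $g\in X_n\subset X_k$. For $k<n$, let $h_k\in X_k$ be a best approximation to $f$. The quasi-triangle inequality \eref{quasi} gives
$$
E_k(g)_X\le\|g-h_k\|_X\le C^*\bigl(\|g-f\|_X+\|f-h_k\|_X\bigr)\le C^*(\sigma_n+\sigma_k).
$$
Therefore
$$
\sigma_k^{-1}E_k(g)_X\le C^*\Bigl(1+\frac{\sigma_n}{\sigma_k}\Bigr)\le C^*\Bigl(1+\frac{\sigma_n}{\sigma_{n-1}}\Bigr)=\gamma_n ,
$$
using $\sigma_k\ge\sigma_{n-1}$ for $k\le n-1$, which holds by monotonicity of $\sigma$. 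Hence $g\in X_n\cap K_{\gamma_n}$, and so $d_n^{c,\gamma_n}(K)_X\le\sigma_n$. Combined with the lower bound, every quantity in \eref{Tac} equals $\sigma_n$.

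\textbf{Main obstacle.} The only delicate point is the estimate on $E_k(g)$ for $k<n$. One must route it through the best approximation $h_k$ of $f$, rather than trying to compare $E_k(g)$ and $E_k(f)$ directly, so that only a single factor $C^*$ appears. The monotonicity of $\sigma$ is what makes the ratio $\sigma_n/\sigma_k$ worst at $k=n-1$. Everything else is bookkeeping with Lemma \ref{L1} and Theorem \ref{Pin}.
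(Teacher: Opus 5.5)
Your proposal is correct and follows essentially the same route as the paper. You take $V=X_n$ and bound $E_k(g)_X$ for $k<n$ through the best approximation from $X_k$ with a single application of the quasi-triangle inequality, which gives $g\in \gamma_n U(\sigma,\cX)$; the chain of inequalities is then closed with Lemma \ref{L1}(iv) and Theorem \ref{Pin}. Your separate treatment of $n=0$ (where $\gamma_0$ is undefined) and your observation that $C^*\ge 1$, so that $1\le\gamma_n\le 2C^*$, are small points of care that the paper leaves implicit.
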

\noindent{\bf Proof:}
Let us fix $f\in U(\sigma,\cX)$ and let $\varphi_k=\varphi_k(f)\in X_k$ be such that 
$$
E_k(f)_X=\|f-\varphi_k\|_X, \quad k\ge0.
$$
From the nestedness of the spaces $X_k$, $k\ge0$, we have 
$$E_k(\varphi_n)_X=0, \quad k\geq n.
$$
On the other hand,
for $k<n$,
$$
E_k(\varphi_n)_X\leq \|\varphi_n-\varphi_k\|_X\leq {C^*(\|\varphi_n-f\|_X+\|f-\varphi_k\|_X)\leq 
C^*(\sigma_n+\sigma_k)}.
$$
Thus, we have  
$$
\|\varphi_n\|_{\cA(\sigma,\cX)}=
\sup_{k\geq 0}\sigma_k^{-1}E_k(\varphi_n)_X\leq {C^*}\sup_{0\leq k<n}\frac{\sigma_n+\sigma_k}{\sigma_k} \le {C^*(1+\frac{\sigma_n}{\sigma_{n-1}})=:\gamma_n\leq 2C^*}.
$$
This means that $\varphi_n\in\gamma_nU(\sigma,\cX)\cap X_n$ and that $\varphi_n$ approximates $f$ to accuracy $d_n(U(\sigma,\cX))_X$, according to Theorem \ref{Pin}. 
Therefore, we have 
$$
d_n^{c,\gamma_n}(U(\sigma,\cX))_X\leq d_n(U(\sigma,\cX))_X=\sigma_n
$$
  The latter inequality, Lemma \ref{L1}, part (iv), and Theorem \ref{Pin}, give that  for $n\geq 0$, we have
\be
\nonumber
 \sigma_n=d_n(U(\sigma,\cX))_X\leq d_n^{r}(U(\sigma,\cX))_X\le {d_n^{c,2C^*}(U(\sigma,\cX))_X}\le d_n^{c,\gamma_n}(U(\sigma,\cX))_X=d_n(U(\sigma,\cX))_X\le \sigma_n. 
\ee
Hence, we must have equality everywhere in the above inequalities.
\hfill $\Box$

\subsection {Refined  approximation spaces}
\label{S:other}

 There are generalizations of approximation spaces which capture the  decay of approximation error in a more subtle way than the spaces $\cA(\sigma,\cX)_X$.  We next describe the best known of these generalizations. We now let $X$ be any quasi-Banach space.  In particular, we allow $X$ to be an $L_p$ space with $0<p<1$.
 
 As before, we denote by $\cX:=(X_n)_{n\ge 0}$,  a sequence of nested linear subspaces $X_n$ with $\dim(X_n)=n$   and  $\bigcup_{n=0}^\infty X_n$ dense in $X$.   We define   the spaces 
 $$\cA_q^r:=\cA_q^r(\cX,X),\quad  \hbox{for}\quad r>0, \quad 0<q< \infty,
 $$
 as the set of all functions $f\in X$ for which
 \be 
 \label{approxspaces}
 \|f\|_{\cA_q^r}:=\left\{\sum_{n\ge 0} [(n+1)^rE_n(f)_X]^q (n+1)^{-1}\right\}^{1/q}<\infty.
\ee
When $q=\infty$, we define 
$$\cA_\infty^r(\cX,X):=\cA((n+1)^{-r},\cX)_X,
$$
where $\cA((n+1)^{-r},\cX)_X$ is given  by  \eref{aspace} with its defined norm \eref{defAsigma}.
\begin{lemma}
\label{L:Aemb}
We have the embedding inequality
\be
\label{inc}
 \|f\|_{\cA^r_\infty}\le 2^{(r+1/q)}\|f\|_{\cA_q^r}, \quad 0<q<\infty.
\ee
\end{lemma}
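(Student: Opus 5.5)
The plan is to bound each error $E_m(f)_X$ by a single dyadic-type block of the series \eref{approxspaces}, using only that $E_n(f)_X$ is non-increasing in $n$ (by nestedness of the $X_n$).

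Fix $f\in \cA_q^r$ and $m\ge 0$; the goal is $(m+1)^rE_m(f)_X\le 2^{r+1/q}\|f\|_{\cA_q^r}$. The relevant block is the indices $n$ with $\lfloor m/2\rfloor\le n\le m$. For these $n$ we have $E_n(f)_X\ge E_m(f)_X$, so
$$
\|f\|_{\cA_q^r}^q\ \ge\ \sum_{n=\lfloor m/2\rfloor}^{m}(n+1)^{rq-1}E_m(f)_X^q .
$$

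Next I would bound the block sum from below. On this range $n+1\ge (m+1)/2$. Since $(n+1)^{-1}\ge (m+1)^{-1}$, each term satisfies $(n+1)^{rq}(n+1)^{-1}\ge 2^{-rq}(m+1)^{rq}(m+1)^{-1}$. The number of terms is $m-\lfloor m/2\rfloor+1=\lceil m/2\rceil+1\ge (m+1)/2$. Multiplying these two facts gives
$$
\sum_{n=\lfloor m/2\rfloor}^{m}(n+1)^{rq-1}\ \ge\ 2^{-rq-1}(m+1)^{rq}.
$$

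Combining the two displays, $(m+1)^{rq}E_m(f)_X^q\le 2^{rq+1}\|f\|_{\cA_q^r}^q$. Taking $q$-th roots gives $(m+1)^rE_m(f)_X\le 2^{r+1/q}\|f\|_{\cA_q^r}$. The supremum over $m$ of the left side is exactly the $\cA((n+1)^{-r},\cX)$ norm from \eref{defAsigma}, which is \eref{inc}.

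There is no real obstacle here. The only care needed is in the counting and in the bound $n+1\ge(m+1)/2$ at the small cases $m=0,1$. For $m=0$ the block is the single term $n=0$, and the bound holds even with constant $1$.
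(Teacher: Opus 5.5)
Your proof is correct and is essentially the paper's argument: both bound $[(m+1)^rE_m(f)_X]^q$ by summing over the at least $(m+1)/2$ indices in $[m/2,m]$. Each uses only the monotonicity of $E_n(f)_X$ and $n+1\ge (m+1)/2$ on that block, which gives exactly the constant $2^{rq+1}$, and starting your block at $\lfloor m/2\rfloor$ rather than at the integers $k\ge m/2$ is a cosmetic difference.
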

\vskip .1in
\noindent
{\bf Proof:} Fix $f\in\cA_q^r$ and let $E_k:=E_k(f)_X$. Since the sequence $(E_k)_{k\ge 0}$ is non-increasing, for each $n\ge 0$, we have
\be
\label{embest1}
[(n+1)^rE_n]^q\le (n+1)2^{rq}[(k+1)^rE_k]^q (k+1)^{-1}, \quad n/2\le k\le n.
\ee
There are at least $(n+1)/2$ integers  $k\in [n/2,n]$.  If we sum \eref{embest1} over those integers, we obtain
 $$
 [(n+1)^rE_{n}]^q \le 2^{rq+1} \|f\|^q_{\cA_q^r}, \quad n\ge 0.
$$
 Taking the $q$-th root of this inequality proves the lemma.\hfill $\Box$

An important property of approximation spaces is that they are an interpolation family (see \cite{DPA}, Corollary 3.4, or \cite{DL}, Chapter 7). Namely, for the real method of approximation, we have that for
$0<r_1<r_2$ and any $0<q,q_1,q_2\le\infty$, 
and $0<\theta<1$, 
\be 
\label{ASinter}
\cA_q^s(\cX,X)  =(\cA_{q_1}^{r_1}(\cX,X),\cA_{q_2}^{r_2}(\cX,X))_{\theta,q},\quad s=(1-\theta)r_1+\theta r_2, 
\ee
with equivalent {quasi-}norms
\be
\label{equivnorms}
\|f\|_{\cA^s_q(\cX,X)}\asymp \|f\|_{(\cA_{q_1}^{r_1}(\cX,X),\cA_{q_2}^{r_2}(\cX,X))_{\theta,q}},\quad s=(1-\theta)r_1+\theta r_2, \ \  0<q\le \infty,
\ee
and constants of equivalence depending only on $r_0$, $r_1$, $q$, $q_1$, $q_2$, and $\theta$.

 
One of the main chapters of approximation theory is to characterize  the approximation classes 
for specific sequences $\cX=(X_n)_{n\ge 0}$ of nested $n$ dimensional linear spaces  whose union is dense in $X$.  We outline the typical way this is accomplished.
Given a sequence $\cX$, suppose that for some value of $r>0$, we can find a quasi-Banach space $Y_r\subset X$  for which the following two inequalities hold (see \cite{DPA} or Chapter 7 of \cite{DL} for details):

\vskip .1in
\noindent
{\bf Jackson Inequality:}
\be 
\label{Jackson}
E_n(f)_X\le C\|f\|_{Y_r}(n+1)^{-r},\quad n\ge 0,
\ee

\vskip .1in
\noindent
{\bf Bernstein Inequality:} 
\be 
\label{Bernstein}
\|g\|_{Y_r}\le C\|g\|_X (n+1)^r,\quad g\in X_n,\ n\ge 0.
\ee
\vskip .1in

Then, for any $0<s<r$ and $0<q\le\infty$,   the approximation space $\cA_{q}^s:=\cA_{q}^s(\cX,X)$ is characterized as an interpolation
space, that is
\be 
\label{JBinter}
\cA^s_{q}= (X,Y_r)_{\theta,q}, \quad \hbox{where}\quad  \theta:=s/r.
\ee
The approximation space norm is equivalent to the interpolation space norm with the constants of equivalency depending on $s$ and $q$.      
 
In particular, see \cite{DL}, Chapter 7, Theorem 9.3, or Corollary 3.3 in \cite{DPA}, one can take the space $Y_r$ in \eref{JBinter} to be $Y_r=\cA^r_\tau$ for any $s<r<\infty$ and any $0<\tau\leq \infty$, that is,
\be
\label{appr}
\cA^s_{q}= (X,\cA^r_\tau)_{\theta,q}, \ 0<q\le\infty,
\quad \theta:=s/r,
\ee
with equivalent norms, and the constants in the norm equivalence
depending only on the parameters.

The following theorem is of the same flavor as Theorem \ref{Pin}.
\begin{theorem}
\label{T:KwA}
Let $X$ be a {quasi-}Banach space and $\cX=(X_n)_{n\ge 0}$ be a sequence of nested subspaces of dimension $n$ in $X$ for which $\bigcup_{n=0}^\infty X_n$ is dense in $X$.   For any $s>0$, $0<q\le\infty$, we have that the Kolmogorov widths of the unit ball $U(\cA_q^s)_X:=U(\cA_q^s(\cX,X))$ satisfy
\be 
\label{Pinkus++}
   d_n(U(\cA_q^s))_X \asymp (n+1)^{-s},\quad n\ge 0,
\ee 
where the constants  in the asymptotic decay depend only on $s$ and $q$.  \end{theorem}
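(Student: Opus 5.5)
The proof has two directions. The upper bound comes from an embedding into $\cA^s_\infty$ together with Theorem \ref{Pin}. The lower bound comes from a ball in the finite dimensional space $X_{2n+1}$, exploiting the fact that $U(\cA^s_q)$ contains a scaled copy of the unit ball of $X_{2n+1}$ in the $X$ quasi-norm.

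\emph{Upper bound.} By Lemma \ref{L:Aemb}, for $0<q<\infty$ we have $\|f\|_{\cA^s_\infty}\le 2^{s+1/q}\|f\|_{\cA^s_q}$. Hence
\[
U(\cA^s_q)\subset 2^{s+1/q}\,U\bigl((k+1)^{-s},\cX\bigr).
\]
Theorem \ref{Pin} (which holds for quasi-Banach $X$) applied with $\sigma_k=(k+1)^{-s}$ then gives
\[
d_n(U(\cA^s_q))_X\le 2^{s+1/q}(n+1)^{-s}.
\]
For $q=\infty$ this is Theorem \ref{Pin} directly, with constant $1$.

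\emph{Lower bound.} Fix $n$, set $m:=2n+1$, and take any $g\in X_m$ with $\|g\|_X\le 1$. Since the spaces are nested, $E_k(g)_X=0$ for $k\ge m$. For $k<m$ we have $E_k(g)_X\le \|g\|_X\le 1$, using $0\in X_k$. Therefore
\[
\|g\|^q_{\cA^s_q}\le \sum_{k=0}^{m-1}(k+1)^{sq-1}\le C(s,q)\,m^{sq},
\]
a routine comparison with $\int_0^m t^{sq-1}\,dt$, valid for all $s,q>0$. For $q=\infty$ the same holds with $\sup_{k<m}(k+1)^{s}\le m^s$. Consequently $c\,m^{-s}U(X_m)\subset U(\cA^s_q)$, where $U(X_m)$ is the unit ball of $X_m$ in $\|\cdot\|_X$ and $c=c(s,q)>0$.

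By monotonicity of widths under inclusion and their positive homogeneity,
\[
d_n(U(\cA^s_q))_X\ge c\,m^{-s}\,d_n(U(X_m))_X.
\]
As cited in the proof of Theorem \ref{Pin}, the Kolmogorov width of the unit ball of a $k$-dimensional space is $\ge 1$ when $n<k$: Borsuk--Ulam in the Banach case, and \cite{gerhold2025entropyapproximationkolmogorovnumbers} in the quasi-Banach case. Since $n<m$, this yields
\[
d_n(U(\cA^s_q))_X\ge c\,(2n+1)^{-s}\ge c\,2^{-s}(n+1)^{-s}.
\]
I could take $m=n+1$ and avoid the factor $2$, but any $m\asymp n$ suffices.

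The only nontrivial ingredient, which I expect to be the main obstacle, is that the $n$-width of the unit ball of $X_m$ with $m>n$ is at least $1$ in a quasi-normed space. I will invoke it exactly as Theorem \ref{Pin} does rather than reprove it. With it, the constants in \eref{Pinkus++} depend only on $s$ and $q$, as claimed.
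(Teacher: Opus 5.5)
Your proof is correct. The upper bound is the paper's argument (Lemma \ref{L:Aemb} plus Theorem \ref{Pin}). Your lower bound, however, takes a genuinely different route.

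The paper bootstraps from the case $q=\infty$ through interpolation. By \eqref{appr}, both $\cA^{3s/4}_\infty$ and $\cA^s_q$ are real interpolation spaces between $X$ and some $\cA^r_\tau$. The transfer inequality \eqref{rkolint} of Lemma \ref{LL1} then gives $(n+1)^{-3s/4}=d_n(U(\cA^{3s/4}_\infty))_X\le C_0[d_n(U(\cA^s_q))_X]^{3/4}$.

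You instead prove the inclusion $c(s,q)\,m^{-s}U(X_m)\subset U(\cA^s_q)$. This is essentially the Bernstein inequality of Appendix \ref{A:JB} with $X_0$ replaced by $X$. You then combine it with the Borsuk--Ulam fact, which mirrors the standard proof of Theorem \ref{Pin}, now carried over to $\cA^s_q$.

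Both routes rest on the same topological fact; the paper uses it indirectly through Theorem \ref{Pin}. Your route has two advantages:
\begin{itemize}
\item It bypasses the reiteration theorem (Lemma \ref{LKF}, whose quasi-normed case needs an extra reference) and the identification \eqref{appr}.
\item It gives explicit constants that visibly depend only on $s$ and $q$. The paper's constant instead passes through the norm-equivalence constants of \eqref{appr} and of reiteration, and, for quasi-Banach $X$, through the quasi-triangle constant used in Lemma \ref{LL1}.
\end{itemize}
The paper's route, in turn, is a two-line consequence of machinery already developed, and it showcases how \eqref{rkolint} transfers width bounds along an interpolation scale.

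One small imprecision in your sum estimate: for $sq>1$ the terms $(k+1)^{sq-1}$ increase. There $\int_0^m t^{sq-1}\,dt$ is a lower bound, not an upper bound, for $\sum_{k<m}(k+1)^{sq-1}$. Use the trivial bound $\sum_{k<m}(k+1)^{sq-1}\le m\cdot m^{sq-1}=m^{sq}$ in that case; the integral comparison is only needed when $sq\le 1$.
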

\vskip .1in
\noindent
{\bf Proof:}   We fix $n$.  It follows from Lemma \ref{L:Aemb} and  Theorem \ref{Pin}, that 
\be 
\label{TP}
d_n(U(\cA_q^s))_X\le C d_n(U(\cA_\infty^s))_X=C(n+1)^{-s}, \quad n\ge 0.
\ee

We now prove the lower bound  in \eref{Pinkus++}.  We already know  this lower bound   when $q=\infty$  from  Theorem \ref{Pin}.  
 Since the approximation spaces are an interpolation family,  it follows from  \eref{rkolint} and \eref{JBinter} that for $s':=3s/4$, we have
\be 
\label{lb1}
(n+1)^{-3s/4}=(n+1)^{-s'}=d_n(U(\cA^{s'}_\infty))_X
\le C_0[d_n(U(\cA_q^s))_X]^{s'/s}=C_0[d_n(U(\cA_q^s))_X]^{3/4},
\ee
where $C_0$ is the constant in \eref{rkolint}.
Thus, the lower bound in  \eref{Pinkus++}
holds with {the constant $C=C_0^{-4/3}$.}



\hfill $\Box$

\subsection{$\gamma$-constrained Kolmogorov widths of  refined approximation spaces}
\label{SS:cwas}

We turn now to analyzing the constrained widths of the unit ball $U(\cA_q^s)$ and prove the following theorem.  
\begin{theorem}
\label{T:cKwA}  
Let $X$ be a {quasi-}Banach space and $\cX=(X_n)_{n\ge 0}$ be a sequence of nested subspaces of dimension $n$ in $X$,   whose union is dense in $X$.

\noindent
{\rm (i)}  For any $s>0$ and $0<q\le\infty$, there is $\bar \gamma(s,q,X) \geq 1$ such that for all  $\gamma\geq \bar \gamma(s,q,X)$ the constrained Kolmogorov widths of the unit ball $U(\cA_q^s):=
U(\cA_q^s(\cX,X))$ satisfy
\be 
\label{Pinkus+}  
d_n^{c,\gamma}(U
(\cA_q^s))_X \asymp (n+1)^{-s},\quad n\ge 0,
\ee 
with constants of equivalency depending only on $s,q$. 

\noindent
{\rm (ii)} 
Additionally, if $s'<s$ and $ 0< q'\le \infty$, then there is a $\bar\gamma(s,s',q, q',X) \geq 1$ so that for all $\gamma \geq \bar\gamma(s,s',q, q',X)$ we have
\be 
\label{Pinkus1+} 
d_n^{c, \gamma}(U
(\cA_q^s))_{\cA_{ q'}^{s'}} \asymp  (n+1)^{-(s-s')},\quad n\ge 0,
\ee 
with the constants of equivalency depending on $s,s',q,q'$.
\end{theorem}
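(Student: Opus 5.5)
The lower bounds in both parts are inherited from unconstrained widths. By Lemma \ref{L1}(iv), $d_n^{c,\gamma}\ge d_n$, so for (i) the lower bound $\gtrsim (n+1)^{-s}$ follows from Theorem \ref{T:KwA}. For (ii) we need a lower bound for the unconstrained width $d_n(U(\cA_q^s))_{\cA_{q'}^{s'}}$. We can get it by the same interpolation trick used in the proof of Theorem \ref{T:KwA}, now with $\cA_{q'}^{s'}$ as the ambient space. Approximation spaces relative to the quasi-Banach space $Z:=\cA^{s'}_{q'}$ and the same spaces $X_n$ satisfy
\[
E_n(f)_Z\asymp \Bigl(\sum_{k\ge n}[(k+1)^{s'}E_k(f)_X]^{q'}(k+1)^{-1}\Bigr)^{1/q'}.
\]
This is standard, since the best approximations are near best in all these norms. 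Consequently $\cA^{t}_{\tau}(\cX,Z)=\cA^{t+s'}_{\tau}(\cX,X)$ with equivalent norms. Applying Theorem \ref{T:KwA} in $Z$ then gives $d_n(U(\cA_q^s))_Z\asymp (n+1)^{-(s-s')}$. If I want to avoid proving this identification in full, only the Jackson and Bernstein inequalities for $\cX$ in $Z$ with $Y=\cA^s_q$ are needed, and both are straightforward.

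For the upper bounds in (i), I first treat $q=\infty$. Theorem \ref{T:CAC} gives $d_n^{c,2C^*}(U(\cA^s_\infty))_X=(n+1)^{-s}$, and the approximant is the best approximation $\varphi_n(f)\in X_n$. For $q<\infty$, I would not go through Theorem \ref{T:Kconst} directly. Instead I would use the same explicit approximant: take $V=X_n$ and $v=\varphi_n(f)$, a best approximation of $f$ from $X_n$. Then $\|f-v\|_X=E_n(f)_X\le 2^{s+1/q}(n+1)^{-s}$ by Lemma \ref{L:Aemb}. For the constraint, bound $E_k(v)_X$: it is $0$ for $k\ge n$, and $E_k(v)\le C^*(E_k(f)+E_n(f))\le 2C^*E_k(f)$ for $k<n$ by monotonicity. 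Summing in \eqref{approxspaces} gives $\|v\|_{\cA^s_q}\le 2C^*\|f\|_{\cA^s_q}$. So $\bar\gamma=2C^*$ works, and by Lemma \ref{L1}(ii) so does every larger $\gamma$. This direct argument sidesteps interpolation entirely.

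For (ii) I would use the same $v=\varphi_n(f)$, with $V=X_n$ and the same constraint bound, which only involves the $\cA^s_q$ norm. It remains to estimate $\|f-v\|_{\cA^{s'}_{q'}}$. We have $E_k(f-v)_X\le C^*(E_k(f)+E_n(f))$ for $k<n$. For $k\ge n$, $E_k(f-v)=E_k(f)$, since $v\in X_n\subset X_k$. Plugging into the $\cA^{s'}_{q'}$ quasi-norm gives two pieces. The low-$k$ part is bounded by $\sum_{k<n}(k+1)^{s'q'-1}E_n(f)^{q'}\lesssim (n+1)^{s'q'}E_n(f)^{q'}\lesssim (n+1)^{-(s-s')q'}$ when $s'>0$; if $s'\le 0$ there is a harmless log factor, but presumably $s'>0$ is intended. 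The tail is $\sum_{k\ge n}(k+1)^{s'q'-1}E_k(f)^{q'}$, and using $E_k(f)\lesssim (k+1)^{-s}$ it is $\lesssim (n+1)^{-(s-s')q'}$.

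The main obstacle I expect is the $q'=\infty$ and general-$q$ bookkeeping in this tail, together with the lower bound in (ii). For the tail, the uniform bound on $E_k(f)$ from Lemma \ref{L:Aemb} suffices because $s>s'$. For the lower bound, I would try first the reiteration route, since the identification of $\cA$ spaces over $\cA^{s'}_{q'}$ requires some care.
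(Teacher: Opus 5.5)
Your proposal is correct in substance. For the upper bounds it takes a different and more elementary route than the paper; your lower bounds are the paper's.

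\emph{Upper bound in (i).} The paper interpolates: it writes $\cA^s_q=(X,\cA^{2s}_\infty)_{1/2,q}$, takes the constrained width of $U(\cA^{2s}_\infty)$ from Theorem~\ref{T:CAC}, and pushes it through Theorem~\ref{T:Kconst}. Its $\gamma$ therefore depends on $s,q$ through the interpolation and norm-equivalence constants. You run the argument of Theorem~\ref{T:CAC} directly on $\cA^s_q$. The termwise bound $E_k(\varphi_n(f))_X\le 2C^*E_k(f)_X$ for $k<n$ gives $\|\varphi_n(f)\|_{\cA^s_q}\le 2C^*\|f\|_{\cA^s_q}$ for every $q$, and Lemma~\ref{L:Aemb} controls the error. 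This yields $\bar\gamma=2C^*$, independent of $s$ and $q$, with the explicit constant $2^{s+1/q}$.

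\emph{Upper bound in (ii).} The paper identifies $\cA^{s-s'}_q(\cX,\cA^{s'}_{q'})$ with $\cA^s_q(\cX,X)$ via the appendix's Jackson and Bernstein inequalities. It then applies (i) in the new ambient space and transfers back with Lemma~\ref{L1}(i). You bypass this by estimating $\|f-\varphi_n(f)\|_{\cA^{s'}_{q'}}$ directly, which is the computation behind \eref{JE1}. The constraint comes for free because the same $\varphi_n(f)$ is used.

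The paper's route showcases a transfer principle (Theorem~\ref{T:Kconst}) that also applies to interpolation classes that are not approximation classes. For this theorem, yours is shorter and gives uniform constants.

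Three points need repair.

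\emph{The low-$k$ bound in (ii).} The bound $E_k(f-v)_X\le C^*(E_k(f)_X+E_n(f)_X)$ for $k<n$ is not the one you need. Inserted into the low-$k$ sum, it produces the $k=0$ term $(C^*(\|f\|_X+E_n(f)_X))^{q'}$, which does not decay in $n$. Your computation actually uses, and needs, $E_k(f-v)_X\le\|f-v\|_X=E_n(f)_X$, obtained by approximating $f-v$ by $0$.

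\emph{Your displayed formula for $E_n(f)_Z$.} It is false for $q'<\infty$. Take $\ell_2$ with $X_n=\mathrm{span}\{e_1,\dots,e_n\}$ and $f=e_{n+1}$. The right side equals $(n+1)^{s'-1/q'}$. But for every $g\in X_n$ and $k\le n$ one has $E_k(f-g)_X\ge E_n(f-g)_X=1$, so $E_n(f)_Z\ge c(n+1)^{s'}$. The correct two-sided estimate needs the extra term $(n+1)^{s'}E_n(f)_X$. The identification $\cA^t_\tau(\cX,Z)=\cA^{t+s'}_\tau(\cX,X)$ is nonetheless true, and your Jackson--Bernstein fallback is exactly the paper's argument.

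\emph{Density when $q'=\infty$.} Applying Theorem~\ref{T:KwA} in $Z$ formally assumes $\bigcup_n X_n$ is dense in $Z$, which fails when $q'=\infty$. The shortest lower bound for (ii) avoids both this and the identification. By the Bernstein inequality \eref{Bineq}, $c(n+1)^{-(s-s')}\{g\in X_{n+1}:\|g\|_Z\le 1\}\subset U(\cA^s_q)$. The $n$-width in $Z$ of the unit ball of an $(n+1)$-dimensional subspace is at least $1$; this is the fact invoked in the proof of Theorem~\ref{Pin}. Hence $d_n(U(\cA^s_q))_Z\ge c(n+1)^{-(s-s')}$.
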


\vskip .1in
\noindent
{\bf Proof:}
It follows from  Theorem \ref{T:CAC} with $\sigma_n=(n+1)^{-s}$ that
\be
\label{csA}
(n+1)^{-s} =  d_n(U(\cA^s_\infty))_X =
{d_n^{c,2C^*}}(U(\cA^s_\infty))_X,\quad s>0 .
\ee
To prove the upper bound in \eref{Pinkus+}, we fix $s,q$ and  use the fact that the $\cA_q^s$ spaces are an interpolation family (see \eqref{appr}).  This together with
\eref{csA} and {Theorem \ref{T:Kconst}} gives
  for  $r=2s$ the upper bound
\be
\label{cwA}
   d_n^{c,\gamma}(U(\cA^s_q))_X
   \le C ({d_n^{c,2C^*}}(U(\cA^r_\infty)))^{s/r}_X\le C(n+1)^{-s},\quad n\ge 0,
\ee
provided $\gamma$ is sufficiently large (depending on $s,q$).

To prove the lower bound in \eref{Pinkus+}, we know that the $\gamma$-constrained width is always as large as the Kolmogorov width.  Hence, from Theorem \ref{T:KwA} we deduce that for all $\gamma$ we have
\be
C (n+1)^{-s}\le d_n(U(\cA^s_q))_X\le d_n^{c,\gamma}(U(\cA^s_q))_X,
\ee
as desired.
 
To prove  \eref{Pinkus1+} of (ii), we fix $q,q',s,s'$,  and we replace the space $X$ where we measure error by $X_0:=\cA_{ q'}^{s'}$.  
 It is known that the approximation spaces  satisfy  
 
\be
\label{newAspaces}
\tilde \cA_q^s:=\cA_q^{s-s'}(\cX,\cA_{ q'}^{s'})=\cA_q^{s}(\cX,X),\quad s>s' \ {\rm and} \ 0<q,q'\le \infty,
\ee
with equivalent norms depending only on the parameters. Indeed, \eref{newAspaces} follows from the fact that the  spaces $X_n$, $n\ge 0$, satisfy Jackson and Bernstein inequalities
 when approximation takes place in  $X_0$  (see Appendix \ref{A:JB}),  since for any $s_0>s$, we have
 $$
 \cA_\tau^{s-s'}(\cX,\cA_{ q'}^{s'})=(\cA^{s'}_{q'},\cA^{s_0}_q)_{\frac{s-s'}{s_0-s'},\tau} =\cA_\tau^{s}(\cX,X),
 $$
 where we have used Theorem 9.1 and equation (9.7) in Chapter 7
 from \cite{DL}.

From what we have already proved in \eref{Pinkus+} for this new approximation family
$\tilde\cA_q^s$ we know  there is a $\tilde\gamma$ such that 
\be
\label{newtilde}
d_n^{c,\tilde\gamma}(U(\tilde \cA_q^{s-s'}))_{X_0}\asymp (n+1)^{-(s-s')},\quad n\ge 0.
\ee
The conclusion of the theorem then follows from Lemma \ref{L1}, part (i) since we have 
$\mu U(\tilde \cA_q^{s})\subset U(\cA_q^s)\subset \lambda U(\tilde \cA_q^{s})$ for some $\lambda,\mu>0$.

\hfill $\Box$

\section{$\gamma$-constrained Kolmogorov widths of compact sets defined by smoothness}
\label{S:Besov}
As an illustration of how the results of this paper can be used to determine constrained widths,  we discuss, in this section,  the $\gamma$-constrained widths of compact sets described by a smoothness condition, when  the approximation error  in measured in   $L_p$,  $1\le p\le \infty$.

We start by recalling  the known results 
for constrained widths of smoothness classes in $L_p$.  These have been  obtained exclusively in the one dimensional case
 $d=1$.   Konovalov introduced    constrained widths in \cite{konovalov1984} and proved that
\be
 \label{Kon1}   d^c_n(U(W^s(L_\infty)))_{L_\infty} \geq Cn^{-\min\{s,2\}},\quad s\in \N,
\ee
where $U(W^s(L_\infty))$ is the unit ball of the one-dimensional periodic Sobolev space $W^s(L_\infty)$. This result implies  that there can be a substantial gap between $d_n^c(K)_X$ and $d_n(K)_X$ for certain compact sets $K\subset X$ for certain Banach spaces $X$, since it is well known that the Kolmogorov widths of $U(W^s(L_\infty))$  behave like $n^{-s}$ (see Theorem 1.1 in Chapter VII of \cite{Pinkus}, for instance). On the other hand, 
  there are Banach spaces $X$ and compact sets in $K\subset X$ for which the Kolmogorov and constrained Kolmogorov widths have the same behavior. For example, it was shown in \cite{BBDP}
that for
 $$
 K:=\{f\in W^1(L_p([0,1])):\,\|f'\|_{L_p([0,1])}\leq 1\}\subset L_2([0,1]), \quad 1\leq p\leq \infty,
$$
we have
$$
d_n(K)_{L_2([0,1])}\asymp d^{c}_n(K)_{L_2([0,1])}\asymp  n^{-1+(1/p-1/2)_+},  \quad n\ge 1.
$$

The picture is slightly different for  the $\gamma$-constrained widths. In spite of the result \eref{Kon1} for the constrained widths of $U(W^s(L_\infty))$, Babenko \cite{babenko1991} showed that for any $\gamma > 1$,  we have
\begin{equation}
    d_n^{c,\gamma}(U(W^s(L_\infty)))_{L_\infty} \leq C(\gamma, s)n^{-s}, \quad s\in \N.
\end{equation}
His work raises the interesting question of how the constant $C(\gamma,s)$ depends on $\gamma$ and $s$.
This question was addressed in \cite{malykhin2016relative}, where it was shown that for any finite $\gamma < \infty$, $C(\gamma,s)$  must grow with $s$.

In this section, we investigate the  $\gamma$-constrained Kolmogorov widths $d_n^{c,\gamma}(K)_X$ of unit balls $K$ of smoothness spaces in the more general setting of bounded Lipschitz domains $\Omega$ in $\R^d$, $d\geq 1$. 
The classical examples of smoothness spaces on $\Omega$
 are the   Lipschitz,  Sobolev, or  Besov spaces.  For the definition
 and properties of these spaces, we refer the reader to  \cite{DL} for the univariate case and \cite{BS,stein1970singular,triebel2006theory} for the multi-dimensional setting.

 We concentrate on Besov spaces, since they are both approximation spaces and interpolation spaces, so that we can apply the theory developed in the previous two sections on the $\gamma$-constrained Kolmogorov widthss. 
For the  most part, we are able to determine the asymptotic decay of these  widths. We show that they behave like the ordinary Kolmogorov widths,
provided the parameter $\gamma$ is chosen large enough. However, there are some exceptional cases where we do not yet know the asymptotic decay of $d_n^{c,\gamma}(K)_X$.  These exceptional cases occur when   the  Kolmogorov widths $d_n(K)_X$ of $K$ cannot be obtained using classical approximation methods (polynomials, splines, wavelet) and  are commonly referred to as the Kashin regime (cases where approximation based on probabilistic methods is needed).

\subsection{Besov spaces}
\label{SS:Besov}
We briefly recall the definition of Besov spaces 
and  some of  their properties.     The material in this section is taken for the most part from the papers \cite{DS,DP}. The reader may refer to
 \cite{DL} for  the univariate case when $\Omega$ is an interval.
 
 If $r$ is a positive integer, $0<p\le \infty $ and $f\in L_p(\Omega)$, we define the modulus of smoothness $\omega_r(f,\cdot)_{p}$ of $f$ by
\be 
\label{moduli}
\omega_r(f,t)_{p}:=\omega_r(f,t,\Omega)_{p}:= \sup_{|{\rm \bf h}|\le t} \|\Delta_{\rm \bf h}^r(f,\cdot)\|_{L_p(\Omega_{r{\rm \bf h}})}, \quad t>0,
\ee
where 
\be 
\label{rdiff}
\Delta_{\rm \bf h}^r(f,\cdot):=(-1)^r\sum_{k=0}^r (-1)^k\binom{r}{k} f(\cdot+k{\rm \bf h}),
\ee 
is the $r$-th difference of $f$ for ${\rm \bf h}\in\R^d$ and $\Omega_{\rm \bf h}:=\{{\rm \bf x}\in\Omega: [{\rm \bf x},{\rm \bf x}+{\rm \bf h}]\subset \Omega\}$.  Here
$[{\rm \bf x},{\rm \bf x}+{\rm \bf h}]$, with 
${\rm \bf x}$, ${\rm \bf h}\in\R^d$, denotes the line segment in $\R^d$ between ${\rm \bf x}$ and ${\rm \bf x}+{\rm \bf h}$,  and $|{\rm \bf h}|$ is the Euclidean norm of ${\rm\bf h}$.

If $s>0$ and $0< p,q\le  \infty$, then the Besov space
$B_q^s(L_p(\Omega))$ is defined as the set of all functions in $f\in L_p(\Omega)$ for which
\be 
\label{Bsemi}
|f|_{B_q^s(L_p(\Omega))}:= \left [\int_0^1 [t^{-s}\omega_r(f,t)_p]^q\frac{dt}{t}\right ]^{1/q}<\infty,\quad 0<q<\infty,
\ee 
where $r$ can be taken as any integer strictly bigger than $s$. When $q=\infty$, we replace the integral in \eref{Bsemi}  by a supremum. This is a (quasi-)semi-norm and we obtain the (quasi-)norm for $B_q^s(L_p(\Omega))$
by adding $\|f\|_{L_p(\Omega)}$ to it. 
While different choices of $r$ result in different (quasi-)semi-norms, the corresponding Besov (quasi-)norms are equivalent, provided $r>s$.
To fix matters, we define the Besov norm by taking  the value of $r=r(s)$ as the smallest integer strictly larger than $s$.  
The (quasi-) norm on this Besov space is then
\be 
\label{Bnorm}
\|f\|_{B_q^s(L_p(\Omega))}:= |f|_{B_q^s(L_p(\Omega))}+\|f\|_{L_p(\Omega)}.
\ee 

Let us make some remarks on some properties of these spaces. Note that 

\be
\label{Besovembed1}
\|f\|_{B_{q_1}^s(L_p(\Omega))}\le C\|f\|_{ B_{q_2}^s(L_p(\Omega))}, \quad 
0<q_2\le q_1\le \infty,
\ee
with the embedding constant $C$ depending only on the parameters.
In other words, the Besov  spaces get smaller as $q$ gets smaller.  

The effect of $q$ in the definition of the Besov spaces is subtle. We remark that when $p=q$ and $s$ is not an integer, the space $B^s_p(L_p(\Omega))$ is equivalent to the Sobolev space $W^s(L_p(\Omega))$, see \cite{adams2003sobolev}.  

%
%

\subsection{Equivalent description of Besov spaces as interpolation spaces or approximation spaces}
\label{SS:wBesov}

When analyzing  the $\gamma$-constrained Kolmogorov widths of finite balls of a Besov space, we use two important properties of these spaces. 
The first property is that the for any fixed $0< p\le\infty$, the Besov spaces $B_q^s(L_p(\Omega))$ are an interpolation family. Namely, for $0<s_1<s_2$ and $0<q,q_1,q_2\le\infty$, we have
\be
\label{Bi11}
\left (B_{q_1}^{s_1}(L_p(\Omega)),B_{q_2}^{s_2}(L_p(\Omega))\right )_{\theta,q} = B^s_q(L_p(\Omega)), \quad s=(1-\theta)s_1+\theta s_2,
\ee
with equivalent norms.
For a proof of this fact, see Corollary 6.2 in \cite{DP1} in the case of $\Omega$ being the unit cube in $\R^d$ and Corollary 6.8 in \cite{DS} for Lipschitz graph domains $\Omega$.
In the case $1\leq p\leq \infty$, for $0<\theta<1$ and $0<q\le \infty$, we have
\be
\label{Bi1}
B_q^{\theta r}(L_p(\Omega))= \left (L_p(\Omega),W^r(L_p(\Omega))\right )_{\theta,q},
\ee
 see \cite {DS}.
Moreover, the interpolation space norm is equivalent to the Besov space norm given in \eref{Bnorm}.

The second property we will use is that Besov spaces are approximation spaces.  Namely, if  $X=L_\tau(\Omega)$, $0<\tau\le \infty$, there is a family
$\cX=(X_n)_{n\ge 0}$ of nested linear spaces, with $\dim(X_n)=n$,  whose union is dense in $L_\tau(\Omega)$, such that
\be
\label{Ba1}
B_q^s(L_\tau(\Omega))= \cA_q^{s/d}(\cX,L_\tau(\Omega)), \quad s>0,\, \ 0<\tau\le \infty,~0<q\le \infty.
\ee
For example, it is shown in \cite{DP} that one may take $X_n$ as an $n$ dimensional linear space of piecewise polynomials on a partition of $\Omega$,  when $\Omega$ is the unit cube. For the more general case of bounded Lipschitz graph domains a similar result was given in \cite{DS}.  One may also take $X_n$ to be the linear space spanned by $n$ terms of a multiscale (wavelet) basis \cite{CDDmulti} under certain assumptions on $\Omega$.

\subsection{Upper bounds for $\gamma$-constrained Kolmogorov widths of Besov balls}
Let $\Omega$ be a bounded Lipschitz graph domain in $\R^d$.  We fix $L_p(\Omega)$, $1\le p\le \infty$, as the Banach space where approximation is to take place.  We use the notation  $U^s_{\tau,q}:=U(B^s_q(L_\tau(\Omega)))$  for  the unit ball of the Besov space $B_q^s(L_\tau(\Omega))$ for $s>0$ and $0<q,\tau\le\infty$.  Then $U^s_{\tau,q}$ is a compact set in $L_p(\Omega)$ if and only if
\be 
\label{Bcompact}
s>\left(\frac{d}{\tau}-\frac{d}{p}\right)_+.
\ee

\begin{theorem}
\label{T:Besovcwidth}
 For the set $U^s_{\tau,q}$, where $0< q,\tau\leq \infty$,  $s>d(1/\tau-1/p)_+$, $1\leq p\leq \infty$,  there is a $\gamma=\gamma(p,q,\tau,s)$ such that
\be 
\label{direct1}
d_n^{c,\gamma}(U^s_{\tau,q})_{L_p(\Omega)}\le C(p,q,s,\tau) (n+1)^{-(\frac{s}{d}-(\frac{1}{\tau}-\frac{1}{p})_+)}, \quad n\ge 0.
\ee
\end{theorem}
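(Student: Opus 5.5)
We split into two cases according to whether $\tau\ge p$ or $\tau<p$, and in both cases reduce to Theorem \ref{T:cKwA} via the identification \eref{Ba1} of Besov spaces as approximation spaces, with Lemma \ref{L1}(i) absorbing norm-equivalence constants.

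\emph{Case $\tau\ge p$.} Here the claimed rate is $(n+1)^{-s/d}$. Since $\Omega$ is bounded, $U^s_{\tau,q}\subset \lambda U^s_{p,q}$ for some $\lambda$. So it suffices to treat $\tau=p$, provided the constraint is phrased in terms of $U^s_{\tau,q}$. To avoid this mismatch, I would instead work directly with $X=L_\tau(\Omega)$ and the family $\cX$ of \eref{Ba1}. Then $U^s_{\tau,q}$ is comparable to $U(\cA_q^{s/d}(\cX,L_\tau))$, and Theorem \ref{T:cKwA}(i) gives $d_n^{c,\gamma}(U(\cA^{s/d}_q))_{L_\tau}\le C(n+1)^{-s/d}$ for large $\gamma$. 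Since $\|\cdot\|_{L_p}\le C\|\cdot\|_{L_\tau}$, the same spaces $V$ and the same constrained approximants give the bound in $L_p$. Lemma \ref{L1}(i) then transfers between $U(\cA^{s/d}_q)$ and $U^s_{\tau,q}$, at the cost of enlarging $\gamma$ by the equivalence constants.

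\emph{Case $\tau<p$.} Set $\mu:=s/d-(1/\tau-1/p)>0$. The plan is again to measure error in a space where Theorem \ref{T:cKwA} applies and then embed into $L_p$. Take $X=L_\tau(\Omega)$ and $\cX$ from \eref{Ba1}. Choose $s'=d(1/\tau-1/p)$, so $0<s'<s$, and pick $q'$ (e.g.\ $q'=\min(p,1)$ or $q'=\tau$) such that the Sobolev-type embedding
\[
\cA^{s'/d}_{q'}(\cX,L_\tau)=B^{s'}_{q'}(L_\tau(\Omega))\hookrightarrow L_p(\Omega)
\]
holds; this is the classical sharp embedding at the critical index $s'=d(1/\tau-1/p)$ with $q'\le p$. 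Theorem \ref{T:cKwA}(ii), with smoothness $s/d$ and $s'/d$, gives spaces $V$ of dimension $n$ and approximants $v\in V\cap\gamma U(\cA^{s/d}_q)$ with
\[
\|f-v\|_{\cA^{s'/d}_{q'}}\le C(n+1)^{-(s-s')/d}=C(n+1)^{-\mu}.
\]
The embedding converts this into an $L_p$ error bound, and the constraint is unchanged since it refers to the Besov ball itself. Lemma \ref{L1}(i) again handles the equivalent-norm constants, since $U^s_{\tau,q}$ and $U(\cA^{s/d}_q(\cX,L_\tau))$ are comparable by \eref{Ba1}.

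Two points need care. First, error in a stronger norm dominates error in a weaker one, so constrained widths are monotone under continuous embeddings of the error space; this is immediate from the definition \eref{Kolwidthc1}. Second, the main obstacle is justifying the critical embedding $B^{s'}_{q'}(L_\tau)\hookrightarrow L_p$ on a Lipschitz graph domain, including the cases $\tau<1$ and $p=\infty$. For $p=\infty$ one needs $q'\le 1$, which is still allowed since $q'$ is free in Theorem \ref{T:cKwA}(ii). I would cite the known embedding results from \cite{DS,DP}. If $s'$ is an awkward value there, I would instead take $s'$ slightly larger than the critical index, say $s'=d(1/\tau-1/p)+\eta$, which only costs $\eta$ in the rate. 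Avoiding that loss is why I insist on the exact critical index with a small $q'$.
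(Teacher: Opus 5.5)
Your proposal is correct and follows the paper's proof essentially step for step. For $\tau\ge p$ you apply Theorem \ref{T:cKwA}(i) in $L_\tau$ and pass to $L_p$ using the boundedness of $\Omega$; the paper handles $\tau=p$ first and then $\tau>p$ in exactly this way. For $\tau<p$ you apply Theorem \ref{T:cKwA}(ii) with the error measured in $\cA^{s'/d}_{q'}(\cX,L_\tau)=B^{s'}_{q'}(L_\tau(\Omega))$ at the critical index $s'=d(1/\tau-1/p)$, and then embed into $L_p$.

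One correction on the choice of $q'$: the paper takes $q'=1$, which is what your $\min(p,1)$ gives, and this works for all $1\le p\le\infty$. You should drop the alternative $q'=\tau$. It fails when $p=\infty$ and $\tau>1$, since $q'\le p$ alone is not enough at $p=\infty$, as you note yourself.
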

\vskip .1in
\noindent
{\bf Proof:}  Let us first consider the case when $\tau=p$, i.e., the smoothness is measured in the same space $L_p$ where we measure error.  We use the  fact that these Besov spaces are approximation spaces, see \eref{Ba1}.  The upper bound in this case therefore follows from \eref{Ba1}  and Theorem \ref{T:cKwA}. 

In the case $\tau>p$,  we have from what we just proved that
\be
\label{taup}
d_n^{c,\gamma}(U_{\tau,q}^s)_{L_p(\Omega)}\le 
C d_n^{c,\gamma}(U_{\tau,q}^s)_{L_\tau(\Omega)}\le C (n+1)^{-\frac{s}{d}},\quad n\ge 0,
\ee
provided $\gamma$ is sufficiently large. Indeed, let  $\delta_n:=d_n^{c,\gamma}(U_{\tau,q}^s)_{L_\tau(\Omega)}$.  We know that $\delta_n\asymp ( n+1)^{-s/d}$, $n\ge 0$.  If  $f\in U_{q,\tau}^s$, let $g\in X_n$ provide the bound
$$ 
\|f-g\|_{L_\tau}\le \delta_n \quad {\rm and} \quad \|g\|_{B_q^s(L_\tau(\Omega))}\le \gamma \|f\|_{B_q^s(L_\tau(\Omega))}.
$$
Then, $\|f-g\|_{L_p(\Omega)}\le C\|f-g\|_{L_\tau(\Omega)}\le C\delta_n$ because $\Omega$ is bounded. This proves \eref{direct1} in this case.

Finally, we consider the case $\tau<p$.  Let $s':=d(\frac{1}{\tau}-\frac{1}{p})$.  In this case, we know from \eqref{Ba1} that
\be
\label{charapprox}
B^s_q(L_\tau(\Omega)) = \cA^{s/d}_q(\cX,L_\tau(\Omega))~~\text{and}~~ B^{s'}_1(L_\tau(\Omega)) = \cA^{s'/d}_1(\cX,L_\tau(\Omega))
\ee
with equivalent norms. Therefore, if $f\in U_{q,\tau}^s$,   from  part (ii) of 
Theorem \ref{T:cKwA} for an appropriate $\gamma=\gamma(\tau,p,s,q)$, there is a function  $g\in X_n$  such that
\be 
\label{thereisg}
\|f-g\|_{B^{s'}_{1}(L_\tau(\Omega))}\le C(n+1)^{-(s-s')/d},\quad {\rm and}\quad \|g\|_{B^s_q(L_\tau(\Omega))}\le \gamma\|f\|_{B^s_q(L_\tau(\Omega))}.
\ee
It is standard fact that the Besov space $B^{s'}_{1}(L_\tau(\Omega))$ embeds into $L_p$, i.e., that $\|\cdot\|_{L_p}\le C\|\cdot\|_{B^{s'}_{1}(L_\tau(\Omega))}$.  This gives
\be 
\label{thereisg1}
\|f-g\|_{L_p}\le C(n+1)^{-(s-s')/d}\le C(n+1)^{-(\frac{s}{d}-(\frac{1}{\tau}-\frac{1}{p})_+)},
\ee
which proves \eref{direct1} in this last case. \hfill $\Box$

\subsection{Lower bounds for the $\gamma$-constrained Kolmogorov widths of Besov classes}
\label{SS:lbBesov}
Of course, the $\gamma$-constrained Kolmogorov widths   of Besov classes in $L_p(\Omega)$ are always larger than their Kolmogorov widths, i.e.,
\be 
\label{lbBesov}
d_n^{c,\gamma}(U^s_{\tau,q})_{L_p(\Omega)}\ge d_n(U^s_{\tau,q})_{L_p(\Omega)}.
\ee
The  Kolmogorov widths of many classical smoothness classes are known (see Chapters 13 and 14 of \cite{LGM}, or Chapter VII or \cite{Pinkus}). 
These unconstrained widths match the upper bound
in Theorem \ref{T:Besovcwidth} when $p \leq \tau$ or when $1\leq \tau,p\leq 2$ (see for instance Theorem 3.8 in Chapter 14 in \cite{LGM} for the one dimensional case and Lipschitz spaces). Outside of this regime, probabilistic methods are necessary to construct optimal Kolmogorov spaces \cite{Kashin} and we do not know the constrained widths in these cases. We leave this as an interesting open problem.

\vskip .2in

\section{Appendix} 
\label{S:App}

\subsection{Proof of Proposition \ref{kolmogorov-gelfand-proposition}}
\label{SS:A1}
   We want to to show that if  $\Omega$ is a compact topological space and $K$ is a compact, centrally symmetric subset of $C(\Omega)$, then we have
    \begin{equation}
    \nonumber
        d_n(K)_{C(\Omega)} \leq \delta_n(K)_{C(\Omega)} = d^n(K)_{C(\Omega)}, \quad  n\ge 1,
    \end{equation}
where $d^n$ and $\delta_n$ denote the Gelfand and linear widths, respectively. Since the left inequality is trivial and we also clearly have $d^n(K)_X \leq \delta_n(K)_X$ for any compact $K\subset X$ in any Banach space $X$ (see for instance \cite{Pinkus} or Chapter 13 in \cite{LGM}), we need only prove that 
\be 
\label{toproveGelfand}
\delta_n(K)_{C(\Omega)} \leq d^n(K)_{C(\Omega)},\quad n\ge 1.
\ee

   To prove \eref{toproveGelfand}, we fix $n\ge 1$ and let    $\delta > 0$ be arbitrary and fixed.   Let 
   {$\lambda_1,...,\lambda_n\in C(\Omega)^* 
   $} be chosen to realize the Gelfand widths up to error $\delta$, i.e.,  
    \begin{equation}\label{realizing-Gelfand-widths-equation}
        \sup\{\|f\|_{C(\Omega)}:~f\in K~\text{and}~\lambda_i(f) = 0, \ i=1,\dots,n\}\leq d^n(K)_{C(\Omega)} + \delta.
    \end{equation}
    Consider the continuous map $E:K\times \Omega\rightarrow \mathbb{R}^{n+1}$ given by
    \begin{equation}
        E(f,x) := (\lambda_1(f),...,\lambda_n(f),f(x)).
    \end{equation}
    For each $x\in \Omega$, the image
    \begin{equation}
        E(K,x) := \{E(f,x):~f\in K\}\subset \R^{n+1},
    \end{equation}
    is a compact, convex, centrally symmetric subset of $\mathbb{R}^{n+1}$. 
    
    Fix $\epsilon > 0$ and let $z := (d^n(K)_{C(\Omega)} + \delta + \epsilon)e_{n+1}$. Observe that by \eqref{realizing-Gelfand-widths-equation} it follows that $z\notin E(K,x)$. We define
    \begin{equation}
        z(x) := \arg\min_{y\in E(K,x)} \|y - z\|_{\ell_2(\R^{n+1})}
    \end{equation}
    to be the closest point in $E(K,x)$ to $z$. This point exists and is unique since $E(K,x)$ is compact and convex.
    It follows relatively easily from this that $z(x)$ is a continuous function of $x$. 
    
    Next, we observe that for every $x\in \Omega$ we have
    \be
    \label{eqq}
    z(x)\cdot e_{n+1} < z\cdot e_{n+1}.
    \ee
    Indeed,  since $0\in E(K,x)$ for every $x\in \Omega$,  by convexity we have that $(0-z(x))\cdot (z - z(x)) \leq 0$, and therefore
    \begin{equation}
    \label{eqq0}
        0 \leq z(x) \cdot (z - z(x)) = (z \cdot e_{n+1} - z(x)\cdot e_{n+1})(z(x) \cdot e_{n+1}) - \sum_{i=1}^n (z(x)\cdot e_i)^2.
    \end{equation}
    Since $z(x) \cdot e_{n+1} \geq 0$ (since $0\in E(K,x)$ would otherwise be closer to $z$) and $z(x)\neq z$ (since $z\notin E(K,x)$),  \eref{eqq} follows from \eref{eqq0}.
    Moreover, we have that 
    \begin{eqnarray}
\label{eqq1}
        (z - z(x))\cdot z(x) \leq ((z-z(x))\cdot e_{n+1})(z(x)\cdot e_{n+1}).
    \end{eqnarray}

    For $i=1,...,n$,  we now define the functions
    \begin{equation}
        f_i(x) := \frac{(z - z(x))\cdot e_i}{(z - z(x))\cdot e_{n+1}}, \quad x\in \Omega.
    \end{equation}
    Note that  $f_i\in C(\Omega)$ since the denominator never vanishes and $z(x)$ is a continuous function of $x$.  
    We next show that the linear operator $\cL:C(\Omega)\to \span\{f_1,\ldots,f_n\}$, given by
    $$
    \cL(f):=- \sum_{i=1}^n\lambda_i(f)f_i\,\,\in C(\Omega)
    $$
    is a good approximation to $f\in K$. First, for any $f\in K$ and $x\in \Omega$, we calculate
    \begin{eqnarray}
    \nonumber
        f(x) + \sum_{i=1}^n \lambda_i(f)f_i(x) &=& \frac{1}{(z - z(x))\cdot e_{n+1}}(z - z(x))\cdot \left(f(x)e_{n+1} + \sum_{i=1}^n \lambda_i(f)e_i\right)\\
        \nonumber
        &=& \frac{1}{(z - z(x))\cdot e_{n+1}}(z - z(x))\cdot E(f,x).
    \end{eqnarray}
    Since $E(f,x)\in E(K,x)$, it follows that $(z - z(x))\cdot (E(f,x) - z(x)) \leq 0$. Hence, we get
    \begin{equation}
        f(x) + \sum_{i=1}^n \lambda_i(f)f_i(x) \leq \frac{(z - z(x))\cdot z(x)}{(z - z(x))\cdot e_{n+1}}\leq z(x)\cdot e_{n+1} < z\cdot e_{n+1} = d^n(K)_{C(\Omega)} + \delta + \epsilon,
    \end{equation}
     where we have used \eref{eqq1} and \eref{eqq}.
    Applying the exact same argument to $-f\in K$, we get
    \begin{equation}
        \left|f(x) + \sum_{i=1}^n \lambda_i(f)f_i(x)\right| \leq d^n(K)_{C(\Omega)} + \delta + \epsilon, \quad x\in \Omega, \,
        f\in K.
    \end{equation}
    Hence,
    \begin{equation}
        \sup_{f\in K}\left\|f -  \cL(f) \right\|_{C(\Omega)} \leq d^n(K)_{C(\Omega)} + \delta + \epsilon.
    \end{equation}
    Since $\delta > 0$ and $\epsilon > 0$ were arbitrary, this implies $\delta_n(K)_{C(\Omega)} \leq d^n(K)_{C(\Omega)}$, as desired.
\hfill $\Box$

\subsection {Jackson and Bernstein inequalities for approximation spaces}
\label{A:JB}
In this section, we prove the following Jackson and Bernstein inequalities that were used in the proof of \eref{newAspaces}.  Let $s>s'>0$ and $X_0=\cA_{q'}^{s'} $ and $Y_0:=\cA^s_q$.
\vskip .1in
\noindent
{\bf Jackson  inequality}:   There is a constant $C>0$, depending only on $s,s',q,q'$ such that for each  $f\in Y_0$ we have 
\be
\label{JE1}
E_n(f)_{X_0}\le C(n+1)^{-(s-s')}\|f\|_{Y_0}, \quad n\ge 0.
\ee
\noindent
{\bf Proof:}
Since $\|\cdot\|_{\cA_\infty^s}\le C_0\|\cdot\|_{\cA_q^s}$, $0<q<\infty$ (see Lemma \ref{L:Aemb}),  it is enough to  prove this  when $Y_0=\cA_\infty^s$. 
Similarly, we only have to consider values of $q'$ that are {$0<q'<\infty$}. Let  $f\in U(\cA_\infty^s)$ and for each $k=0,1,\dots$, let $g_k$ be a best approximation to $f$ from 
$X_k$ in the $X$ quasi-norm. Then,  $\|f-g_k\|_X\le (k+1)^{-s} $, $k\ge 0$.  It follows that
\be
\label{approxfg1}
E_k(f-g_n)_X\le (n+1)^{-s},\quad  0\le k\le n,
\ee
and
\be
\label{approxfg2}
E_k(f-g_n)_X\le \|f-g_n-(g_k-g_n)\|_X =\|f-g_k\|_X \le (k+1)^{-s},\ k>n.
\ee
Hence, 
\be
\label{eachk}
\|f-g_n\|^{q'}_{X_0}   \le \sum_{k\ge 0}((k+1)^{s'q'-1} [\max\{k+1,n+1\}]^{-sq'}\le  C(n+1)^{-(s-s')q'},
\ee
as desired.  \hfill $\Box$

\vskip .1in
\noindent
{\bf Bernstein inequality}:   If $g\in X_n$, then
\be
\label{Bineq}
\|g\|_{Y_0}\le C \|g\|_{X_0} (n+1)^{s-s'},\quad n\ge 0,
\ee
where $C$   depend on $s,s'q,q'$.

\noindent
{\bf Proof:}  It is enough to prove this when $X_0=\cA_\infty ^{s'}$ and $q$ is small.  Let $g\in X_n$.  Then, we have $E_k(g)_X=0$, $k\ge n$  and $E_k(g)_X\le \|g\|_{X_0}(k+1)^{-s'}$,  $0\le k\le n$.  It follows that
\be 
\label{Bineq1}
\|g\|^q_{\cA_q^s}= \sum_{k=0}^{n} [(k+1)^sE_k(g)_X]^q(k+1)^{-1}\le 
\|g\|_{X_0}^q\sum_{k=0}^{n} (k+1)^{q(s-s')-1}\le C\|g\|_{X_0}^q(n+1)^{q(s-s')},
\ee
as desired.\hfill $\Box$

\vskip .5in

\noindent
Ronald DeVore, Department of Mathematics, Texas A\&M University, College Station, Texas 77843, ronalddevore@tamu.edu
\vskip .1in
\noindent
Guergana Petrova, Department of Mathematics, Texas A\&M University, College Station, Texas 77843, 
gpetrova@tamu.edu.
\vskip .1in
\noindent
Jonathan W. Siegel, Department of Mathematics, Texas A\&M University, College Station, Texas 77843, jwsiegel@tamu.edu
\vskip .1in
\noindent
Przemyslaw Wojtaszczyk,  Institute of Mathematics,
Polish Academy of Sciences, ul.
Sniadeckich 8, 00-656 Warsaw, Poland,
wojtaszczyk@icm.edu.pl

\vskip .1in
\begin{thebibliography}{99}

\bibitem{adams2003sobolev}
R. Adams, J. Fournier,
{Sobolev spaces},   Elsevier,  2003.


\bibitem{AK} F. Albiac, N. Kalton, {\em Topics in Banach space theory},  Graduate Texts in Mathematics, Springer, 2006.


\bibitem{babenko1991}
V. F. Babenko, {\it On best uniform approximations by splines in the presence of restrictions on their derivatives},
Mat.
Zametki {\bf 50}(6) (1991), 24–-30.

\bibitem{BS} C. Bennett, R. Sharpley, {\em Interpolation of Operators}, Academic Press, 1988.


\bibitem{BL} J Bergh, J. L{\"o}fstr{\"o}m, Interpolation Spaces: An Introduction,  Grundlehren der mathematischen Wissenschaften, 2011.


\bibitem{BCDDPW} P. Binev, A. Cohen, W. Dahmen, R. DeVore, G. Petrova, P. Wojtaszczyk, {\em Convergence rates for greedy algorithms in reduced bases methods}, SIAM J. Math. Anal. {\bf 43} (2011), 1457--1472.

\bibitem{BBDP} P. Binev,  A. Bonito, R. DeVore,  G. Petrova, {\em Optimal Learning}, Calcolo {\bf 61:15} (2024).

\bibitem{BMPPT}
A. Buffa, Y. Maday, A.T. Patera, C. Prud’homme,  G. Turinici, {\it A Priori convergence
of the greedy algorithm for the parameterized reduced basis}, M2AN Math. Model. Numer.
Anal., {\bf 46} (2012), 595–-603.

\bibitem{carl1981entropy}
B. Carl, {\it Entropy numbers, s-numbers, and eigenvalue problems},
Journal of Functional Analysis {\bf 41}(3) (1981), 290--306.

\bibitem{CS} B. Carl, I. Stephani, Entropy, compactness and the approximation of operators, Cambridge
University Press, 1990.


\bibitem{CDDmulti} A. Cohen, R. DeVore, W. Dahmen, {\em  Multiscale decompositions on bounded domains}, Transactions of the AMS,  {\bf 352}(8) (2000), 3651--3685.




\bibitem{CDPW} A. Cohen, R. DeVore, G. Petrova,  P.Wojtaszczyk, {\em  Optimal Stable Nonlinear Approximation}, J. FOCM,  {\bf 22} (2022), 607--647.


\bibitem{DHM} R. DeVore, R. Howard,  C. Micchelli,  {\em  Optimal Nonlinear Approximation}, Manuscripta mathematica,  {\bf 63} (1989), 
469--478.

\bibitem{DL} R. DeVore,    G.G. Lorentz,  {\em  Constructive Approximation}, Springer Grundlehren, Springer Verlag, 1993.

\bibitem{DPS} R. DeVore, G. Petrova,  J.W. Siegel, {\it Some remarks on interpolation spaces},  ArXiv ///




\bibitem{DPWg}
R. DeVore, G. Petrova, P. Wojtaszczyk, {\it Greedy Algorithms for Reduced
Bases in Banach Spaces}, Constr. Approx, {\bf 37}(3) (2013), 
455–-466.


\bibitem{DP} R. DeVore, V. Popov,  {\em   Interpolation of approximation spaces}, in "Constructive Theory of Functions 87," Publ. House of Bulg. Acad. Sci., Sofia, (1988), 110--119.

 \bibitem{DP1} R DeVore, V. Popov,  {\it Interpolation of Besov spaces}, Transactions AMS, {\bf 305}(1) (1988),  397--414.

\bibitem{DPA} R. DeVore, V. Popov, {\it Interpolation spaces and non-linear approximation} in ``Function Spaces and Applications'', (M. Cwikel, J. Peetre, Y. Sagher, and H. Wallin, Eds.) Springer Lecture Notes in Math., Vol. 1302, Springer, Berlin, (1988),  191--205.

\bibitem{DS} R. DeVore, R. Sharpley, {\it Besov spaces on domains in $\R^d$}, Transactions of AMS, {\bf 335}(2) (1993), 843--864.






\bibitem{GG} 
D. Garling,  Y. Gordon, {\em  Relations between some constants associated with  finite dimensional Banach spaces.} Israel J. Math. {\bf 9} (1971), 346--361.

\bibitem{gerhold2025entropyapproximationkolmogorovnumbers}
Marcus Gerhold, {\em Entropy-, Approximation- and Kolmogorov Numbers on Quasi-Banach Spaces.} arXiv preprint arXiv:2508.06542 (2025)


\bibitem{I} R.  Ismagilov, {\em Diameters of sets in normed linear spaces, and the approximation of functions by
trigonometric polynomials}, Uspekhi Mat. Nauk {\bf 29}(3)(177)  (1974), 161--178 (in Russian); Russian
Math. Surveys {\bf 29}(3) (1974), 169--186 (English translation).



\bibitem{KS} M. Kadec, M. Snobar, {\em Some functionals over a compact Minkowski space}, Math. Notes {\bf 10} (1971), 694--696.

\bibitem{K1} B. Kashin, 
{\it On Kolmogorov diameters of octahedra}, Dokl. Akad. Nauk SSSR {\bf 214} (1974),
1024--1026; English transl. in Soviet Math. Dokl. {\bf 15} (1974).

\bibitem{K2} B. Kashin,  {\it Diameters of some finite-dimensional sets and classes of smooth functions},
Math. USSR, Izv., {\bf 11} (1977), 317--333.

\bibitem{Kashin}
B. Kashin, {\it The widths of certain finite dimensional sets and classes of smooth functions},
Izv. {\bf 41} (1977), 334--351.


\bibitem{Kol} A. Kolmogoroff, \"Uber die beste Ann  \"aherung von Funktionen einer gegebenen Funktionenklasse, Ann.
Math., {\bf 37} (1936), 107-–111.


\bibitem{konovalov1984}
V. N. Konovalov, {\it Estimates of Kolmogorov-type widths for classes of differentiable periodic functions},
Mat.
Zametki {\bf 35}(3) (1984), 369–-380.

\bibitem{konovalov2002}
V. N. Konovalov, {\it Approximation of Sobolev classes by their finite-dimensional sections},
Mat.
Zametki {\bf 72}(3) (2002), 370–-382.


\bibitem{LS} 
D. Leviatan, I. Shevchuk,  
{\em Comparing the degrees of unconstrained and shape preserving approximation by polynomials}, J. Approx. theory,  
 {\bf 211}(2) (2016), 16--28.

\bibitem{LP} J. Lindenstrauss, A. Pełczyński, {\em Absolutely summing operators in $\cL_p$ spaces and their applications,} Studia Math, {\bf 29} (1968), 275-326



\bibitem{LGM} {G.G. Lorentz, M.Golitschek, Y.Makovoz}, {\em Constructive Approximation, Advanced Problems, vol II} ,Grundlehren der mathematischen Wissenschaften 304, Springer Verlag, 
 1996.




\bibitem{malykhin2016relative}
Y. Malykhin, {\it Relative widths of Sobolev classes in the uniform and integral metrics},
Proceedings of the Steklov Institute of Mathematics {\bf 293}(1)  (2016), 209--215.

 
 
\bibitem{OO} T. Oikhberg, M. Ostrovskii, {\em Dependence of Kolmogorov Widths on the Ambient
Space}, Journal of Mathematical Physics, Analysis, Geometry {\bf 9} (2013),
25--50.

\bibitem{O} T. Oikhberg, {\em Absolute widths of some embeddings}, J. Approx. Theory, {\bf 81}(1) (1995), 120--126.


\bibitem{PW} Guergana Petrova, Przemek Wojtaszczyk,  {\it Lipeschitz widths}, Constr. Approximation, {\bf 7} (2023), 759--805.

\bibitem{Pinkus} Allan Pinkus, {\em $n$-Widths in Approximation Theory}, Springer, 1985.

\bibitem{stein1970singular}
Elias M. Stein, {\it Singular integrals and differentiability properties of functions},
Princeton University Press, No.\ 30, 1970.

\bibitem{telyakovskii2001}
Yu. N. Subbotin, S. A. Telyakovskii, {\it Relative widths of classes of differentiable functions in the $L^2$ metric},
Usp. Mat. Nauk {\bf 56}(4) (2001), 159–-160.

\bibitem{T} V. Tikhomirov, {\em Widths of sets in function spaces and the theory of best approximations}, Uspekhi Mat. Nauk, 15(3), (93), (1960), 81--120. English translation in Russian Math. Survey, 15, 1960.


\bibitem {hinrichs2016carl} Aicke Hinrichs, Anton Kolleck, and Jan Vybiral, {\it Carl's inequality for quasi-Banach spaces}, Journal of Functional Analysis {\bf 271}-8 (2016), 2293--2307.

\bibitem{triebel2006theory}
Hans Triebel, {\it Theory of Function Spaces},
Birkh\"auser Basel, 1983.

\bibitem {P} P. Wojtaszczyk, {\it On greedy algorithm approximating Kolmogorov widths in Banach spaces}, J. Math. Anal. Appl., {\bf 424} (2015), 685--695. 

\end{thebibliography}
\end{document}